\title{Lattice Spectral Sequences and  Cohomology of Configuration Spaces }
\date{\today}
\author{Philip Tosteson}
\begin{document}
\maketitle

\begin{abstract}
   	For a topological space $X$, we introduce a criterion for the $\FI$ module $n \mapsto H^i(\Conf_n(X))$ to be finitely generated and give several applications.  For instance,  if $C$ is a finite connected $CW$ complex, then $X = C \times \R^2$ satisfies the criterion.   Our main  tool is a spectral sequence that we derive from the homological algebra of the partition lattice.  
\end{abstract}
	\section{Introduction}
		\subsection{Cohomology of configuration spaces}\label{confintro}
		
		Let $X$ be a topological space and $k$ be a field.  In this paper, we study the behavior of $H^\bdot (\Conf_n(X), k)$,  the cohomology of configurations of $n$ ordered points in  $X$, for $n \gg 0$.  We use the theory of $\FI$  modules, following the approach developed by Church, Ellenberg, and Farb \cite{CEF}.  $\FI$ is the category of finite sets and injections, and an $\FI$ module is a functor from $\FI$ to the category of $k$ modules.    
		An injection $[m] \into [n]$ yields a map $\Conf_m(X) \leftarrow \Conf_n(X)$ by forgetting and relabeling the points.   This gives the cohomology of configuration space the structure of an  $\FI$ module,  $n \mapsto 
		H^i(\Conf_n (X), k)$.  
		
		When $M$ is an orientable manifold of dimension $\geq 2$, Church, Ellenberg, and Farb prove that  $H^i(\Conf_\bdot (M), \Q)$ is a finitely generated $\FI$ module.   The main purpose of this paper is to extend the finite generation of $H^i(\Conf_\bdot(X),k)$ to topological spaces $X$ that are in some sense $\geq 2$ dimensional.

 	It is important to show that $H^i(\Conf_\bdot (X),k)$ is finitely generated  because  finitely generated $\FI$ modules exhibit uniform behavior for large $n$.  As shown in \cite{CEF}, finite generation of $H^i(\Conf_\bdot(X), \Q)$  implies that the character of $H^i(\Conf_n(X), \Q)$ as an $\bS_n$ representation agrees with a character polynomial for $n \gg 0$. Thus finite generation is closely related to the representation stability results of \cite{church2012homological, church2013representation}.   Taking the multiplicity of the trivial representation recovers the classical homological stability of unordered configuration space \cite{ mcduff1975configuration, segal1979topology}. In positive characteristic, Nagpal shows that finite generation of $H^i(\Conf_\bdot (X), \bbF_p)$ as an $\FI$ module implies periodicity in the cohomology of unordered configuration space \cite{nagpal2015fi}.


		First, we give an explicit criterion for the sheaf cohomology $H^i( \Conf_\bdot (X), k)$ to be a finitely generated $\FI$ module in terms of the vanishing of the relative cohomology of $(X^n , X^n - \Delta X)$.

		\begin{thm}[Criterion for finite generation] \label{introcriterion}
			Let $k$ be a field and let $X$ be a Hausdorff, connected, locally contractible topological space with $H^i(X, k)$  finite dimensional for $i \geq 0$. Let $c \in \bbN$. Write $\Delta: X \to X^n$ for the diagonal embedding.    Suppose the following conditions hold:
			 \begin{enumerate}
				\item   For $n \geq 2$ and  $i < n$,    we have $H^i(X^n, X^n - \Delta X,k) = 0$, 
				\item  For $n \gg 0$, we have $H^i(X^n, X^n - \Delta X, k) = 0$ for $i \leq n + c$, 
			\end{enumerate}
			Then $H^i(\Conf_n(X), k)$ is a finitely generated {\rm $\FI$} module for all $i < c$.
		\end{thm}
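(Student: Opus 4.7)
The plan is to use a \emph{lattice spectral sequence}, coming from the stratification of $X^n$ by the partition lattice $\Pi_n$, to compute $H^*(\Conf_n(X), k)$, and then to use the two hypotheses to bound its $\FI$-module structure. To that end, I would construct --- or invoke from an earlier section --- a first-quadrant, $\FI$-equivariant spectral sequence converging to $H^*(\Conf_\bdot(X), k)$, whose $E_1$ page decomposes as a direct sum indexed by partitions $\pi \in \Pi_n$ of terms of the form
\[
\Bigl(\bigotimes_{B \in \pi}\, H^*\bigl(X^{|B|},\, X^{|B|}-\Delta X;\, k\bigr)\Bigr) \otimes W_\pi,
\]
where $W_\pi$ is a combinatorial factor --- essentially the reduced cohomology of the order complex of the open interval $(\hat{0},\pi) \subset \Pi_n$. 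This is in the spirit of the Goresky-MacPherson formula: filtering $X^n$ by the codimension of the fat diagonal and applying Künneth decomposes the relative contributions along the blocks of each partition. Functoriality under coordinate projections $X^n \to X^m$ gives the $E_*$ pages the structure of $\FI$-modules.

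Next, I would apply hypothesis (1). Each singleton block of $\pi$ contributes $H^*(X,k)$ starting in degree $0$, while each block $B$ with $|B|\geq 2$ contributes only in cohomological degrees $\geq |B|$. By Künneth, the $\pi$-summand of $E_1$ vanishes below cohomological degree $n-s_\pi$, where $s_\pi$ is the number of singleton blocks of $\pi$. Hence the $E_1$-contributions to $H^i$ come only from partitions whose non-singleton support has size at most $i$. I would then use hypothesis (2) to refine this in the stable range: for $n \gg 0$ and $i < c$, any non-singleton block $B$ whose size exceeds an absolute threshold contributes a factor vanishing in all degrees $\leq c$, so it cannot contribute to $H^i$. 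Together, the two hypotheses restrict contributing partitions to those with non-singleton blocks of uniformly bounded sizes acting on a bounded-size subset of $[n]$; in $\FI$-module language, these assemble into finite sums of $\FI$-modules induced from bounded-degree pieces, which are finitely generated.

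Finite generation then propagates: only finitely many columns contribute to any fixed total degree, and finite generation is closed under subquotients and extensions of $\FI$-modules over a field (by Noetherianity of $\FI$), so the $E_\infty$ page --- and hence $H^i(\Conf_\bdot(X),k)$ --- is finitely generated for $i<c$. The main obstacle I expect is the construction and verification of the lattice spectral sequence for a merely Hausdorff, locally contractible $X$: since we cannot rely on tubular neighborhoods, the spectral sequence and its Künneth-type decomposition must be built sheaf-theoretically, for instance via Godement or \v{C}ech resolutions on the fat diagonal. Once that input is in place, the partition-lattice bookkeeping and the reduction to $\FI$-module finite generation should follow an established template.
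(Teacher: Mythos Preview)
Your approach is the paper's: a partition-lattice spectral sequence of $\FI$-modules converging to $H^*(\Conf_n X,k)$, whose $E_1$ page one decomposes block-by-block via K\"unneth, then bounds using the two hypotheses, and concludes by $\FI$-Noetherianity. You also correctly flag the main technical input---the sheaf-theoretic construction of the spectral sequence for merely Hausdorff, locally contractible $X$---which is exactly what Section~\ref{lss} supplies.

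There is one degree-accounting slip worth fixing. The factor $W_\pi$ does not sit in degree zero: in the paper's normalization it is $\Tor^{\rP(n)}_{r(\pi)}(S_{\hat 1}, S_\pi)$, in homological degree $r(\pi) = n - \#\{\text{blocks of }\pi\}$, so the $\pi$-summand lands in \emph{total} cohomological degree $s - r(\pi)$, not $s$. Under hypothesis~(1) a block $B$ of size $b \geq 2$ has $s_B \geq b$ and hence contributes $s_B - (b-1) \geq 1$ to the total degree; thus hypothesis~(1) alone only bounds the \emph{number} of non-singleton blocks by $i$, not the size $n - s_\pi$ of their union. (As written, your deduction from~(1) would already bound the support and make hypothesis~(2) redundant, which is a signal something is off.) Hypothesis~(2) is what bounds individual block sizes: a sufficiently large block contributes at least $c+2$ to the total degree and so cannot appear for $i < c$. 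With this correction your argument goes through and matches Proposition~\ref{evilprop} and Theorem~\ref{criterion}.
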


		 Notice for $X = \bbR^d$ we have that $H^\bdot (\bbR^{dn}, \bbR^{dn} - \Delta \R^d, k)$ equals $k$ in degree $(d-1)n$, thus $\bbR^d$ satisfies conditions $(1)$ and $(2)$  exactly when $d \geq 2$.  When $d = 1$, we have $H^0(\Conf_n \R^1, k) =k[\bS_n]$, which grows too quickly to be a finitely generated $\FI$ module. 
		 
		    Throughout this paper, we take $H^i( Y, k)$ to mean the sheaf cohomology of $Y$, which agrees with singular cohomology whenever $Y$ is locally contractible \cite{sella2016comparison}.  When $X$ is Hausdorff and locally contractible, $Y = \Conf_n(X)$ is also locally contractible. 
		
		Interpreting $H^i(X^n, X^n - \Delta X, k)$ as the hypercohomology of a complex of sheaves gives a corollary of Theorem \ref{introcriterion} that is local on $X$.   If $X$ has an open cover by spaces of the form $V \times \R^2$  then  the Kunneth formula and the vanishing of $H^i(\bbR^{2n}, \bbR^{2n} - \bbR^2, k)$ together imply that $X$ satisfies this local criterion:  
		
		\begin{cor}\label{timesr2}
  			Let $X$ be a Hausdorff, connected, locally contractible topological space with $H^i(X, k)$ finite dimensional.  If every point $p$ has a neighborhood $U_p \iso V_p \times \R^2$ for some space $V_p$,  then $H^i(\Conf_\bdot(X) , k)$ is a finitely generated{ \rm $\FI$} module for all $i$.  
  		\end{cor}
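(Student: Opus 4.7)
My plan is to verify the two hypotheses of Theorem~\ref{introcriterion} for every $c \in \bbN$ simultaneously, which gives finite generation of $H^i(\Conf_\bdot(X), k)$ in every degree. Both hypotheses follow at once from the uniform vanishing
\begin{equation*}
H^i(X^n, X^n - \Delta X, k) = 0 \quad \text{for all } n \geq 2 \text{ and } i < 2n - 2,
\end{equation*}
since this directly yields condition $(1)$, and for $n \geq c + 3$ it also yields condition $(2)$.

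To establish this vanishing I would realize the relative cohomology as the hypercohomology of a complex of sheaves on $X$. Let $\Delta\colon X \into X^n$ denote the diagonal, and let $\mathcal{K}_n^\bullet$ be the complex on $X$ obtained from the local cohomology sheaves of $k_{X^n}$ along $\Delta X$, transported to $X$ via the homeomorphism $X \iso \Delta X$. Standard local-to-global identifications give $H^i(X^n, X^n - \Delta X, k) \cong \mathbb{H}^i(X, \mathcal{K}_n^\bullet)$; the hypercohomology spectral sequence then reduces the desired vanishing to showing that the cohomology sheaves $\mathcal{H}^q(\mathcal{K}_n^\bullet)$ vanish on $X$ for $q < 2n - 2$.

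I would verify this stalkwise, using the local product structure. For $p \in X$, pick $U_p \iso V_p \times \R^2$; since the local cohomology sheaves are local, excision identifies $(\mathcal{K}_n^\bullet)_p$ with the stalk at $p$ of the analogous complex built from the diagonal in $U_p^n = V_p^n \times \R^{2n}$. That diagonal factors as $\Delta V_p \times \Delta \R^2$, and the K\"unneth formula for local cohomology (valid with field coefficients) writes this stalk as a tensor product with one factor governed by the pair $(\R^{2n}, \R^{2n} - \Delta \R^2)$. A direct calculation---the complement deformation retracts onto $S^{2n-3}$---shows this factor has cohomology concentrated in degree $2n - 2$, so the entire tensor product vanishes below degree $2n - 2$, as required.

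The main technical step is the sheaf-theoretic K\"unneth formula at this level of generality, since the auxiliary space $V_p$ is only assumed to be locally contractible and Hausdorff. Over a field this is essentially formal, but some care is needed to ensure that K\"unneth and excision compose correctly at the level of stalks of the local cohomology complex.
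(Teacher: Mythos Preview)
Your proposal is correct and follows essentially the same route as the paper: both arguments identify $H^i(X^n, X^n-\Delta X,k)$ with the hypercohomology of $\rR\Delta_n^!\ku$ on $X$, reduce to showing this complex has cohomology sheaves vanishing below degree $2n-2$, and verify that stalkwise using the product decomposition $U_p\cong V_p\times\R^2$, the K\"unneth formula for local cohomology, and the explicit computation of $\rR\Delta_n^{\R^2!}\ku=\ku[-(2n-2)]$. The paper records the K\"unneth and locality statements you flag as the main technical point as Propositions~\ref{kunneth} and~\ref{localization}, so your caution there is well placed but already addressed.
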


		The following examples satisfy the hypotheses of Corollary \ref{timesr2}.  We believe they are new. 
		\begin{exa}
  			Let $W$ be connected topological space  with finite dimensional cohomology.  Suppose that $W$ is either a topologically stratified  space\cite{thom1969ensembles}, or a Whitney stratified spacce  \cite{goresky1988stratified}.   If all the strata  of $W$ are $\geq 2$ dimensional, then $H^i(\Conf_\bdot(W) , k)$ is a finitely generated  $\FI$ module.  
  		\end{exa}

		\begin{exa}
			Let  $Y$ be a connected $CW$ complex with finitely many cells.  Then $H^i(\Conf_n(Y \times \bbR^2) , k)$ is a finitely generated $\FI$ module. 
		\end{exa}

		The relative cohomology of $(X^n, X^n - \Delta X)$  can be expressed in terms of the dual of tensor powers  of $\omega_X$, where $\omega_X $ is the dualizing complex of $X$, whenever the hypotheses of Verdier duality are satisfied \cite{kashiwara2013sheaves}.  Under this reformulation, if $\omega_X$ is concentrated in homological degree $\geq 2$,  then the two vanishing conditions of Theorem \ref{introcriterion} follow.  Further, the homology of the stalk of $\omega_X$ at $p$ is $\tilde H_{i-1}(U-p, k)$, where $U$ is a contractible neighborhood of $p$.  From this we obtain a criterion for finite generation that depends on whether, locally, removing a point disconnects $X$.   

  		\begin{cor} \label{dualcorr2}
  				Let $k$ be a field and let $X$ be a connected, locally contractible,  closed subset of $\bbR^n$  such that  $H^i(X, k)$ is finite dimensional and $H^r_c(X,k)$ vanishes for $r \gg 0$.  
  				If for all $p \in X$ there is a contractible neighborhood $U$ of $p$ such that $U-p$ is connected, then $H^i(\Conf_\bdot (X))$ is a finitely generated  {\rm $\FI$} module for all $i$.  
  
  		\end{cor}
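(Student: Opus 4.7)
The plan is to deduce the corollary from Theorem \ref{introcriterion} by showing that the Verdier dualizing complex $\omega_X$ is concentrated in homological degrees $\geq 2$, and then invoking the remark preceding the corollary, which says that this forces both vanishing conditions of the theorem to hold for every $c$.

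First I would verify that Verdier duality is available on $X$. Being a closed subset of $\bbR^n$, $X$ is locally compact Hausdorff and of finite topological dimension; combined with the hypothesis $H^r_c(X, k) = 0$ for $r \gg 0$, this puts us in the setting of \cite{kashiwara2013sheaves}, where $\omega_X$ lies in the bounded derived category of sheaves on $X$ and satisfies a K\"unneth formula $\omega_{X^n} \cong \omega_X^{\boxtimes n}$.

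Next I would use the stalk identification $H_i((\omega_X)_p) \cong \tilde H_{i-1}(U - p, k)$ recalled just above the statement of the corollary, where $U$ is a sufficiently small contractible neighborhood of $p$, to rule out homological degrees $0$ and $1$. The degree-$0$ stalk is $\tilde H_{-1}(U - p, k)$, which vanishes whenever $U - p$ is nonempty; since $X$ is connected with more than one point (the one-point case being trivial) and $T_1$, the set $U - p$ is always nonempty. The degree-$1$ stalk is $\tilde H_0(U - p, k)$, which vanishes precisely when $U - p$ is connected, which is exactly the standing hypothesis. Hence $\omega_X$ is concentrated in degrees $\geq 2$, and the K\"unneth formula then gives $\omega_{X^n}$ in degrees $\geq 2n$.

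With this in hand, the remark preceding the corollary yields $H^i(X^n, X^n - \Delta X, k) = 0$ for $i < 2n$, which verifies condition (1) of Theorem \ref{introcriterion} (since $i < n \leq 2n$) and condition (2) for any fixed $c$ as soon as $n > c$ (since then $i \leq n + c < 2n$). Theorem \ref{introcriterion} then gives finite generation of $H^i(\Conf_\bdot(X), k)$ in every degree $i < c$, and letting $c$ be arbitrary gives the conclusion. I expect the main obstacle to lie not in the argument above but in the underlying reformulation step alluded to by the author --- namely, rigorously expressing $H^*(X^n, X^n - \Delta X, k)$ as the cohomology of a dual of $\omega_X^{\boxtimes n}$ in the absence of smoothness. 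Presumably this is handled in the body of the paper via $R\Gamma_{\Delta X}(X^n, k) \cong R\Gamma(X, \Delta^! k_{X^n})$ together with Verdier duality applied to $\Delta_* k_X$ on $X^n$; once those identifications are available, the stalkwise concentration argument above closes the proof.
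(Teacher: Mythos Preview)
Your proposal is correct and follows essentially the same route as the paper: show $\omega_X$ is concentrated in homological degree $\geq 2$ via the stalk identification $H_i((\omega_X)_p)\cong \tilde H_{i-1}(U-p,k)$, then feed this into the Verdier-duality reformulation of $H^*(X^n,X^n-\Delta X,k)$ to verify the hypotheses of Theorem~\ref{introcriterion}. The reformulation step you flag as the main obstacle is exactly what the paper supplies in Proposition~\ref{dualizing}, namely $\rR\Delta_n^!\,\underline k \simeq (\omega_X^*)^{\otimes(n-1)}$, which yields vanishing in degrees $<2n-2$ (rather than your $<2n$, though either bound suffices).
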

  		
  			Notice that  $\geq 2$ dimensional manifolds satisfy these hypotheses, while $1$ dimensional manifolds do not. The following examples are applications of Corollary \ref{dualcorr2},

  		  \begin{ex}
  		  	Let $G$ be a connected finite graph, considered as a CW complex.   Then  $H^i(\Conf_\bdot(G \times \R^1), k)$ is a finitely generated $\FI$ module for all $i$.     
		\end{ex}
  		
  		\begin{ex}
  			Let $X$ consist of two solid balls glued together along a closed interval on their boundary running from the north to south pole,   $X = D^3 \cup_{[0,1]} D^3$.  Then $H^i(\Conf_{\bdot}(X), k)$ is a finitely generated $\FI$ module for all $i$.  
  		\end{ex}
  		
 	
		  \medskip
	
	\subsection{Spectral sequence}\label{posethomologicalalgebra}

		In \cite{CEF}, the authors pass from local to global via the Leray spectral sequence for $\Conf_n(M) \to M^n$,  studied by Totaro in  \cite{totaro1996configuration}.  Totaro uses the cohomology of $\Conf_n(\R^d)$ to give a description of the $E_2$ page of this spectral sequence.   When $X$ is a general topological space we do not know the local structure of its configuration space, and so a new tool is required.  
		
		Our main innovation is to replace the Totaro spectral sequence by a similar one, derived from the homological algebra of the partition lattice.   Theorem \ref{introcriterion} then follows from analyzing the $E_1$ page.  The second purpose of this paper is to construct this spectral sequence in a general context.

		Let $Y$ be a topological space, $k$ a commutative ring, and let $\cF$ be a sheaf of $k$  modules on $Y$.  Let $P$ be a finite meet lattice.  Our convention is that ${ P}$ does not have a top element, and that ${\widehat P}$  denotes $P \cup \hat 1$, where $\hat 1$ is a top element.  Let $\{Z_m\}_{m \in {\widehat P}}$ be closed subsets of $Y$ indexed by $ \hat P$ such that $Z_m\cap Z_l = Z_{m \wedge l}$,  $Z_{\hat 1} = Y$ and $Z_m \subset Z_l$ for $m \leq l$.  
		 We give a spectral sequence that computes $H^i (Y - \bigcup_{m \in P} Z_m , \cF)$ from the relative cohomology groups  $\{H^i(Y, Y - Z_m, \cF)\}_ {m \in {\widehat P}}$, and homology groups that depend on ${\widehat P}$.  
		
		\begin{thm}\label{vagueFISS}
			    We construct a spectral sequence converging to $H^\bdot (Y - \bigcup_{m \in P} Z_m , \cF)$.  The $E_1$ page consists of sums of $k$ modules of the form: $$\Tor^{\widehat P}_j(S_{\hat 1},  S_m \otimes_k H^i( Y, Y - Z_m, \cF ) ), ~m\in {\widehat P}, ~i,j \in \bbN$$ in cohomological degree $i-j$.  This spectral sequence depends on the choice of a function  $r : {\widehat P} \to \N$ such that $m < l \in {\widehat P}$ implies $r(m) > r(l)$.   When  $m \neq \hat 1$, $$\Tor^{\widehat P}_j(S_{\hat 1},  S_m \otimes_k H^i( Y, Y - Z_m, \cF ) ) =  \tilde H_{j-2}( (m, \hat 1),  H^i(Y, Y- Z_m)),$$  where $ \tilde H_{\bdot}( (m, \hat 1), N)$ denotes the reduced homology of the order complex of the interval $(m,\hat 1)$ with coefficients in the $k$ module $N$, see  \cite{wachs2006poset}.   When $m= \hat 1$, the module $\Tor^{\widehat P}_j(S_{\hat 1},  S_{\hat 1} \otimes_k H^i( Y, Y - Z_{\hat 1}, \cF ) )$ vanishes for $j > 0$ and equals $H^i(Y, Y-Z_{\hat 1}, \cF)$ for $j = 0$.  
		\end{thm}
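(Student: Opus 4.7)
The plan is to build a complex of sheaves on $Y$ whose hypercohomology computes $H^\bdot(U, \cF)$, where $U = Y - \bigcup_{m \in P} Z_m$, then filter it using $r$ and identify the $E_1$-page with the claimed $\Tor$-groups.  First I would set up: for each $m \in \widehat{P}$, let $i_m : Z_m \hookrightarrow Y$ be the closed inclusion and set $\mathcal{G}_m = R(i_m)_{\ast} (i_m)^{!} \cF$, so that $\mathbb{H}^i(Y, \mathcal{G}_m) = H^i(Y, Y - Z_m, \cF)$ and $\mathcal{G}_{\hat 1} = \cF$.  The inclusions $Z_m \subseteq Z_l$ for $m \leq l$ make $m \mapsto \mathcal{G}_m$ into a functor $\widehat{P} \to D(Y, k)$, i.e.\ a sheaf on $Y$ of complexes of $k[\widehat{P}]$-modules.

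The main obstacle I expect is the homological identification of $\mathcal{G} \otimes^{L}_{k[\widehat{P}]} S_{\hat 1}$ with $R(j_U)_{\ast} j_U^{\ast} \cF$, where $j_U : U \hookrightarrow Y$.  The plan is to establish a \v{C}ech-type quasi-isomorphism $R(i_Z)_{\ast} (i_Z)^{!} \cF \simeq \mathrm{hocolim}_{m \in P} \mathcal{G}_m$ for $Z = \bigcup_{m \in P} Z_m$, invoking the lattice identity $Z_m \cap Z_l = Z_{m \wedge l}$ to match higher intersections combinatorially.  Plugging this into the distinguished triangle $R(i_Z)_{\ast} (i_Z)^{!} \cF \to \cF \to R(j_U)_{\ast} j_U^{\ast} \cF$, and interpreting $S_{\hat 1}$ as the cokernel of $\bigoplus_{m \in P} P_m \to P_{\hat 1}$ at the level of $k[\widehat{P}]$-modules, then identifies the cofiber with $\mathcal{G} \otimes^{L}_{k[\widehat{P}]} S_{\hat 1}$ and yields $H^i(U, \cF) = \mathbb{H}^i(Y, \mathcal{G} \otimes^{L}_{k[\widehat{P}]} S_{\hat 1})$.

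Next I would compute this derived tensor product via the resolution $P_{\bdot} \to S_{\hat 1}$ whose degree-$q$ term is $\bigoplus_{m_0 < m_1 < \cdots < m_q = \hat 1} P_{m_0}$, summed over strict chains in $\widehat{P}$ ending at $\hat 1$, with differential the alternating sum of face maps (omitting $m_i$; omitting $m_0$ uses the map $P_{m_0} \to P_{m_1}$ induced by $m_0 \leq m_1$).  The double complex $\mathcal{G} \otimes_{k[\widehat{P}]} P_{\bdot}$ admits a filtration by $r$: let $F^{p}$ be generated by the $\mathcal{G}_m$-summands with $r(m) \leq p$.  The strict order-reversing property of $r$ guarantees that the face maps preserve $F^{\bdot}$, since each face map either fixes $m_0$ or replaces it by a strictly larger element of $\widehat{P}$ (for which $r$ is strictly smaller).

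Finally, the $E_1$-page of the associated hypercohomology spectral sequence works out as follows.  On the associated graded $\mathrm{gr}^{p}$ only the internal face maps (those fixing $m_0$) survive, so for each $m$ with $r(m) = p$, the contribution decomposes as $\mathcal{G}_m \otimes_k C_{\bdot}(m)$, where $C_{\bdot}(m)$ is the complex of chains of $\widehat{P}$ from $m$ to $\hat 1$.  Taking hypercohomology and applying K\"unneth produces the entries $\Tor^{\widehat{P}}_j(S_{\hat 1}, S_m \otimes_k H^i(Y, Y - Z_m, \cF))$ in cohomological degree $i - j$, matching the claim.  The identification $\Tor^{\widehat{P}}_j(S_{\hat 1}, S_m) = \tilde H_{j - 2}((m, \hat 1); k)$ for $m < \hat 1$ is then standard: $C_{\bdot}(m)$ is a shift by two of the augmented chain complex of the order complex of $(m, \hat 1)$, as in \cite{wachs2006poset}.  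For $m = \hat 1$ the computation reduces to $S_{\hat 1} \otimes^{L}_{k[\widehat{P}]} S_{\hat 1} = k$ concentrated in degree zero.
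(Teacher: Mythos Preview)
Your overall architecture matches the paper's: establish the quasi-isomorphism $Rj_*j^*\cF \simeq S_{\hat 1}\otimes^L_{\widehat P}\mathcal G$ via a \v Cech/hocolim argument (the paper does this by reducing to the Boolean lattice and then transferring along the surjection $B(n)\twoheadrightarrow\widehat P$), then resolve $S_{\hat 1}$ by the reduced bar complex and filter.  The identification of $\Tor_j^{\widehat P}(S_{\hat 1},S_m)$ with $\tilde H_{j-2}((m,\hat 1);k)$ is also exactly what the paper records.

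The gap is in your filtration step.  Filtering only by $r(m_0)$ gives, on the associated graded, the total complex $\mathrm{Tot}\bigl(D_m\otimes_k C_\bullet(m)\bigr)$ for each $m$, where $D_m$ is a complex computing $H^\ast(Y,Y-Z_m,\cF)$ and $C_\bullet(m)$ is the chain complex of strict chains above $m$.  Your appeal to ``K\"unneth'' to identify $H^n$ of this with $\bigoplus_{i-j=n}\Tor_j^{\widehat P}(S_{\hat 1},S_m\otimes H^i)$ is only valid over a field; for a general commutative ring $k$ (which is the hypothesis here) that cohomology is governed by a further spectral sequence with $E_2^{i,-j}=H_j\bigl(C_\bullet(m)\otimes H^i(D_m)\bigr)=\Tor_j^{\widehat P}(S_{\hat 1},S_m\otimes H^i)$, and there is no reason for it to degenerate or for the resulting filtration on $H^n$ to split.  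So your $E_1$ page is not the claimed direct sum of Tor groups.

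The paper's remedy is a finer filtration that mixes the rank with the \emph{internal} cohomological degree of $D_m$: one filters the bar double complex by the quantity $2r(p)+s$ (with an additional half-step taking kernels of $d_D$ at the boundary).  The factor of $2$ ensures that the face map $m_0\mapsto m_1$ of the bar differential, which drops $r$ by at least $1$, drops the filtration index by at least $2$ and hence dies on the associated graded, while $d_D$ raises it by exactly $1$ and is handled by the kernel step.  The associated graded then collapses to $\bigoplus_{p}\bigl(G\otimes_{\widehat P}S_p\bigr)\otimes H^s(D_p)$, whose cohomology is precisely $\Tor_t^{\widehat P}(S_{\hat 1},S_p\otimes H^s)$ with no further K\"unneth argument needed.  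Replace your filtration by this one and the rest of your outline goes through.
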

		We give a more detailed description of this spectral sequence and a variant involving $H^i(Y - Z_m, \cF)$  in Theorem \ref{FunctorialSS}. In particular, we describe the way in which it is functorial for maps of spaces $Y' \to Y$.

		To explain the meaning of  $\Tor^{\widehat P}_j(S_{\hat 1},  S_m \otimes_k H^i( Y, Y - Z_m, \cF ) )$ without reference to the order complex of ${\widehat P}$,  we need the notion of a ${\widehat P}$ representation and of the derived tensor product of ${\widehat P}$ representations.       
	       We consider ${\widehat P}$ as a finite category: the objects of ${\widehat P}$ are the elements of ${\widehat P}$, and there is a unique morphism $m \to l$ whenever $m \leq l$.  Then a \emph{${\widehat P}$ representation} is a functor ${\widehat P} \to \Mod k$.  Given a representation $M$ of ${\widehat P} \op$ and a representation $N$ of ${\widehat P}$, we may form their tensor product,  which is a $k$ module $M \otimes_{\widehat P} N$.    Then  $\Tor_j(M,N) = H_j(M \otimes^\rL_{\widehat P} N)$ is the homology of the derived tensor product of $M$ and $N$.   The ${\widehat P} \op$  representation  $S_{\hat 1}$ is defined by $l \mapsto k$ if $l =1$ and $l\mapsto 0$ otherwise. 
	      The ${\widehat P}$  representation  $ S_m \otimes_k H^i( Y, Y - Z_m, \cF )$ is defined by $l \mapsto H^i( Y, Y - Z_m, \cF ) $ if $l = m$  and $l \mapsto 0$ otherwise.   These definitions play a important role in our proof of Theorem \ref{vagueFISS}, and we give a more detailed introduction to them in Section 2.1.  
		
		
		To study $\Conf_n(X)$, we specialize to the case where  $Y = X^n$ and ${\widehat P} = \rP(n)$, the lattice of partitions of $n$.   For a partition $l$ of $n$,  define $Z_l$ to be the set of tuples $(x_i)_{i =1}^n$, such that  if $i,j$ are in the same block of $l$, then $x_i = x_j$.  When $X$ is Hausdorff, $Z_l$ is closed.   Then we have $ Y - \bigcup_{l \in P} Z_l = \Conf_n(X)$.   
		  Assume that $k$ is a field.   Let $l$ be a partition $n = b_1 \sqcup \dots \sqcup b_r$.  Using the homology of the partition lattice \cite{wachs2006poset}, we have  $$\Tor^{\widehat P}_{n-r}(S_{\hat 1},  H^i( Y, Y - Z_p, k ) ) ~\iso  ~H^i (X^n , X^n - \Delta_l X^{r}) ^ {\oplus{c_l}}$$  and all the other terms vanish. Here   $\Delta_l: X^{r} \to X^n$ is the diagonal embedding corresponding to $l$, and $$c_l = (\#b_1 -1)!  \cdots  (\# b_r - 1)!.$$
		When $X$ is an orientable manifold of dimension $d$,  we have $$H^\bdot (X^n , X^n - \Delta_l X^{r}, k) ~=~ H^i(X^r, \sqtimes_{i = 1}^r ~\omega_X^{*  \otimes (\# b_i - 1)}) ~=~   H^{\bdot - \sum_i (b_i -1) d } (X^r, k).$$ In this way, the homology groups of our spectral sequence resemble those of Totaro.  We examine further the action of $\FI$ on the $E_1$ page in Section 3.    
  
		
		\subsection{Lattice homological algebra}
		
		We view the spectral sequence of Theorem \ref{vagueFISS} as a shadow of a more fundamental relationship between  $\{H^i(Y, Y - Z_p, \cF)\}_ {p \in {\widehat P}}$ and  $H^i (Y - \bigcup_{p \in P} Z_p , \cF)$, which we describe in this section.    
		
		If  $l \leq m$, then $Y - Z_l$ contains  $Y - Z_m$, and we obtain a restriction map $H^i(Y, Y - Z_l, \cF) \to H^i(Y, Y-Z_m, \cF)$.  These maps give $l \mapsto H^i(Y, Y - Z_l, \cF) $ the structure of a ${\widehat P}$ representation. 
		We lift these representations 
		  from $k$ modules to chain complexes of sheaves on $Y$,  as follows.    Define $$i_{p*}\rR i_p^! \cF  :=  \cone(\cF  \to \rR j_{p*} \cF|_{Y - Z_p})[-1],$$ where   $i_p$ is the inclusion of $Z_p$ into $Y$, $j_p$ is the inclusion of $Y - Z_p$, and $\rR j_{p*}$ is the derived pushforward.   Then $p \mapsto i_{p*}\rR i_p^! \cF$ is a chain complex of sheaves of ${\widehat P}$ representations.  More precisely,   there is an object $i_{\bdot *} \rR i_\bdot^! \cF$ of $\bD(\Sh(X)^P)$ which is isomorphic to $   i_{p*}\rR i_p^! \cF$ in degree $p$. 
		  The hypercohomology groups of this object are the $P$ representations $\{H^i(Y, Y - Z_\bdot, \cF)\}_i$.
		
		Similarly, write $j$ for the inclusion of $Y - \bigcup_{p \in P} Z_p$ into $Y$.  Then $\rR j_* j^* \cF$ is a chain complex of sheaves on $Y$ whose hypercohomology groups are $\{H^i (Y - \bigcup_{p\in P} Z_p , \cF)\}_{i}.$ 
		
		 The following theorem expresses the relationship that underlies Theorem \ref{vagueFISS}:
	
		 \begin{thm}[Resolution of $\rR j_*j^*\cF $]\label{resolution}
			      We have  $$\rR j_* j^* \cF   \iso S_{\hat 1} \otimes^{\rL}_{\widehat P}   i_{\bdot*} \rR i_\bdot^!  ~\cF$$  and applying $\rR \pi_*$ this isomorphism becomes $$~ \rR \pi_* ~ \cF|_U \iso  S_{\hat 1}  \otimes_{\widehat P}^\rL \rR  \pi_*  \rR i_\bdot^! ~\cF.  $$
		\end{thm}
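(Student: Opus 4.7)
The plan is to compute the right hand side via an explicit projective resolution of $S_{\hat 1}$ and then compare the resulting complex to $\rR j_* j^* \cF$ stalk by stalk on $Y$. I would use the normalized bar resolution of $S_{\hat 1}$ as a $\widehat P \op$-module, whose term in homological degree $k$ is $\bigoplus_\sigma k[\widehat P(-, p_0)]$ with $\sigma$ running over strict chains $p_0 < p_1 < \cdots < p_k = \hat 1$. Tensoring termwise with the diagram $p \mapsto i_{p *} \rR i_p^! \cF$ yields an explicit complex in $\bD(\Sh(Y))$ with $i_{\hat 1 *} \rR i_{\hat 1}^! \cF = \cF$ in homological degree $0$, with $\bigoplus_{p < \hat 1} i_{p *} \rR i_p^! \cF$ in degree $1$, and analogous sums over longer chains in higher degrees. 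Composing the degree-$0$ summand with the unit $\cF \to \rR j_* j^* \cF$ gives a candidate comparison map; it descends from the bar complex to its homology because each $i_{p *} \rR i_p^! \cF$ with $p < \hat 1$ is supported on $Z_p \subseteq Y \setminus U$ and thus maps to zero in $\rR j_* j^* \cF$.

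I would then verify that this candidate is a quasi-isomorphism by passing to stalks. Fix $y \in Y$ and set $P_y = \{p \in \widehat P : y \in Z_p\}$. Since $Z_p \cap Z_q = Z_{p \wedge q}$ and $p \leq q$ implies $Z_p \subseteq Z_q$, the subset $P_y$ is an upward closed meet-sublattice of $\widehat P$ containing $\hat 1$, with minimum $q_y$. The stalks $(i_{p *} \rR i_p^! \cF)_y$ vanish for $p \notin P_y$, and upward closedness forces only chains $\sigma$ lying entirely in $P_y$ to contribute to the stalk of the bar complex. Consequently $(S_{\hat 1} \otimes^{\rL}_{\widehat P} i_{\bdot *} \rR i_\bdot^! \cF)_y$ is computed by the analogous bar construction indexed by $P_y$. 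For $y \in U$ the subset $P_y$ equals $\{\hat 1\}$, both sides collapse to $\cF_y$, and the comparison is immediate.

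The main obstacle is the case $y \in \bigcup_{p \in P} Z_p$, where $q_y < \hat 1$. Here I would induct on $|P_y|$. Pick $p_0 \in P_y \setminus \{\hat 1\}$ maximal and split the chains contributing to the stalk of the bar complex into those passing through $p_0$ and those avoiding $p_0$. Chains avoiding $p_0$ compute the stalk of the bar complex for $P_y \setminus \{p_0\}$, while chains passing through $p_0$ assemble into a bar complex for the interval $[q_y, p_0]$ tensored against the stalk of $i_{p_0 *} \rR i_{p_0}^! \cF$. The localization triangle $i_{p_0 *} \rR i_{p_0}^! \cF \to \cF \to \rR j_{p_0 *} j_{p_0}^* \cF$, evaluated on a small neighborhood $V$ of $y$, then matches these two pieces against the stalk of $\rR j_* j^* \cF$ at $y$, and the inductive hypothesis closes the argument. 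The delicate combinatorial step is to line up the simplicial differentials of the bar complex with the signs of the iterated Mayer--Vietoris for the stratification by the $Z_p$'s; this is the same content that later produces the reduced homology of open intervals $(m, \hat 1)$ in the $E_1$ page of Theorem \ref{vagueFISS}.

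The second displayed isomorphism follows from the first by applying $\rR \pi_*$: the normalized bar resolution consists of projective $\widehat P \op$-modules in $k$-modules, so $\rR \pi_*$ passes inside the derived tensor product termwise and yields $\rR \pi_* \cF|_U \iso S_{\hat 1} \otimes^{\rL}_{\widehat P} \rR \pi_* \rR i_\bdot^! \cF$.
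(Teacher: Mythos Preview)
Your approach is quite different from the paper's. The paper first works with the diagram of \emph{open} pushforwards $p\mapsto j_{p*}\cJ|_{U_p}$ (for $\cJ$ a flabby resolution of $\cF$), shows this diagram is $\widehat P$-exact, and only afterwards passes to $i_{\bdot*}\rR i_\bdot^!\cF$ by a cone manipulation. Exactness is not checked stalkwise; instead it is reduced to the Boolean lattice $B(n)$ via a transfer lemma (any meet-lattice receives a surjection from a Boolean one, and pullback along such a surjection preserves derived tensor products). The Boolean case is then a short Mayer--Vietoris induction on $n$, restricting to each $U_i$. This buys functoriality in $\widehat P$ for free and sidesteps any combinatorics of chains at a point.

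Your stalkwise induction has a real gap. With $p_0$ maximal in $P_y\setminus\{\hat 1\}$, every chain through $p_0$ has $p_1=p_0$, so the ``through $p_0$'' piece is the bar complex over $\{p\in P_y:p\le p_0\}$ (top element $p_0$) applied to the diagram $p\mapsto (i_{p*}\rR i_p^!\cF)_y$; its terms are $(i_{p_i*}\rR i_{p_i}^!\cF)_y$ for \emph{varying} $p_i\le p_0$, not a single copy of $(i_{p_0*}\rR i_{p_0}^!\cF)_y$ tensored with a bar complex as you assert. The ``avoid $p_0$'' piece is indeed a bar complex over $P_y\setminus\{p_0\}$, but your inductive hypothesis does not identify it: there need be no point $y'$ with $P_{y'}=P_y\setminus\{p_0\}$, and if instead you delete $Z_{p_0}$ from the arrangement the complement enlarges from $U$ to some $U'\supsetneq U$, so the hypothesis yields $(\rR j'_*j'^*\cF)_y$ rather than $(\rR j_*j^*\cF)_y$. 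The localization triangle you invoke is for the pair $(Z_{p_0},U_{p_0})$, not for $(Y\setminus U,U)$, and you have not explained how it bridges these mismatched targets. As written, the induction does not close; if you want a stalkwise argument, it is cleaner to do it on the open-pushforward diagram as in the paper, where the induction really is Mayer--Vietoris for the cover $\{U_i\}$.
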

		
		Although the statement of Theorem \ref{resolution} is clean, throughout the body of the paper we avoid the derived category in order make our spectral sequences functorial.   Thus in Theorem \ref{coolres},  we construct a specific zig-zag of quasi-isomorphisms between chain complexes whose images in $\bD(X)$ are $\rR j_* j^* \cF$ and $ S_{\hat 1} \otimes^{\rL}_{\widehat P}   i_{\bdot*} \rR i_\bdot^!  ~\cF$ respectively.  
		
		One interpretation of Theorem \ref{resolution} is as follows.   Given a chain complex $C$ of ${\widehat P}$ representations that computes $H^i(Y, Y - Z_p, \cF)$ and a free ${\widehat P} \op$ resolution $F \simto S_{\hat 1}$,  the action of ${\widehat P}$ on $C$ allows us to build a chain complex $F \otimes_{\widehat P}  C$  that has homology $H^i(X - \cup_{{\widehat P} \in {\widehat P}}, \cF|_U)$. \footnote{ When we say $C$ computes   $H^i(Y, Y - Z_p, \cF)$, we mean that $C$ is quasi-isomorphic as a complex of $P$ representations to the canonical Godement complex.  It is not sufficient for its homology to be isomorphic to $H^i(Y, Y - Z_p, \cF)$. }

		Our original motivation for Theorem \ref{resolution} was to give a global version of the  Goresky--MacPherson formula for the cohomology of the complement of a subspace arrangement \cite{goresky1988stratified}.  We describe how Theorem \ref{resolution} specializes to this formula in Example \ref{GMformula}.

	   	To obtain Theorem \ref{vagueFISS} from Theorem \ref{resolution}, we construct a spectral sequence that converges to the homology of a derived tensor products over ${\widehat P}$. Let   $A$ be a  ${\widehat P} \op$ representation and $M$ be a complex of ${\widehat P}$ representations.  For any  projective resolution  $F \simto A$ we construct a filtration of $F \otimes_{\widehat P} M$, which gives a spectral sequence:  
		
		\begin{prop}[Spectral sequence for $A \otimes^L -$] \label{vagueSS}
			Let $A$ be an object of $\Mod k^{\widehat P}$ and let $M$ be a left bounded  chain complex of ${\widehat P}$ representations in $\Sh(Y)$.   Then there is a spectral sequence of sheaves on $Y$ converging to $H^\bdot(A \otimes_{\widehat P}^\rL M)$, whose $E_1$ page consists of sums of modules of the form $ \Tor_j^{\widehat P}(A, H^i( M_p) )$, in cohomological degree $i-j$.  
		\end{prop}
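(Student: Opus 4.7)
The plan is to construct a filtered complex that represents $A \otimes_{\widehat P}^\rL M$, and then read the spectral sequence off this filtration.  Choose a projective resolution $F_\bdot \simto A$, which exists because ${\widehat P}$ is a finite category; then $A \otimes_{\widehat P}^\rL M$ is computed by the total complex of the bicomplex $F_\bdot \otimes_{\widehat P} M^\bdot$, with $F_j \otimes_{\widehat P} M^i$ in total cohomological degree $i-j$.  The crucial homological input is that each $F_j$ is flat (a direct summand of a sum of representables), so $F_j \otimes_{\widehat P} -$ is exact on ${\widehat P}$ representations.

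Next, use the rank function $r : {\widehat P} \to \N$ to filter $M$ by subrepresentations $M^{(s)}$ with $M^{(s)}(p) = M(p)$ when $r(p) \leq s$ and $0$ otherwise; the order-reversing property of $r$ ensures that each $M^{(s)}$ is a genuine subrepresentation, and the successive quotients are $M^{(s)}/M^{(s-1)} = \bigoplus_{r(p) = s} S_p \otimes_k M(p)$, where $S_p$ is the ${\widehat P}$ representation of value $k$ at $p$ and $0$ elsewhere.  Applying $F \otimes_{\widehat P} -$ and totalizing transfers the filtration to the representing complex and produces a spectral sequence converging to $H^\bdot(A \otimes_{\widehat P}^\rL M)$; convergence is immediate because the filtration is bounded (${\widehat P}$ is finite and $M$ is left bounded).

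To identify the $E_1$ page, use flatness of each $F_j$ to see that the $s$-th graded piece of the total complex equals the total complex of $\bigoplus_{r(p) = s} F \otimes_{\widehat P} (S_p \otimes_k M(p))$.  The natural identification $A \otimes_{\widehat P} (S_p \otimes_k V) \iso A(p) \otimes_k V$, together with a Künneth argument over $k$, rewrites the cohomology of each such piece as a sum of terms $\Tor^{\widehat P}_j(A, S_p \otimes_k H^i(M_p))$ in cohomological degree $i-j$, matching the claim.  The main obstacle is the bookkeeping of signs and bidegrees in the bicomplex totalization, together with the need to replace the Künneth isomorphism by a Künneth spectral sequence when the sheaves $H^i(M_p)$ fail to be flat over $k$; both issues remain tractable because of the finiteness of ${\widehat P}$ and the left-boundedness of $M$.
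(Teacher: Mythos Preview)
Your construction produces a valid spectral sequence converging to $H^\bdot(A\otimes_{\widehat P}^{\rL}M)$, but its $E_1$ page is not the one claimed.  With the rank-only filtration, the associated graded at level $s$ is $\bigoplus_{r(p)=s}(G\otimes_{\widehat P}S_p)\otimes_k M_p^\bdot$, and its cohomology --- your $E_1$ --- is the \emph{hyper}-Tor
\[
H^n\bigl(\mathrm{Tot}\,((G\otimes_{\widehat P}S_p)\otimes_k M_p^\bdot)\bigr),
\]
not the direct sum $\bigoplus_{i-j=n}\Tor_j^{\widehat P}(A,\,S_p\otimes_k H^i(M_p))=\bigoplus_{i-j=n}H_j\bigl((G\otimes_{\widehat P}S_p)\otimes_k H^i(M_p)\bigr)$.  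These agree only when the inner bicomplex spectral sequence degenerates at $E_2$, which in turn requires the complex $M_p$ to be formal relative to tensoring with $G\otimes_{\widehat P}S_p$.  Over a general commutative ring $k$ this fails; the K\"unneth spectral sequence you invoke \emph{converges} to your $E_1$ term but does not identify it with the required direct sum.  So the ``K\"unneth argument'' you flag as a bookkeeping obstacle is in fact the missing idea.

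The paper closes this gap by using a finer filtration that mixes the rank with the cohomological degree in $M$: one filters by $2r(p)+s$ and, at the boundary, includes $\ker(d^s_{M_p})$ rather than all of $M_p^s$.  The effect of this ``soft truncation'' is that the associated graded at each step is already quasi-isomorphic to $H^{s}(M_p)\otimes_k(G\otimes_{\widehat P}S_p)$ for a \emph{single} value of $s$; since $G\otimes_{\widehat P}S_p$ is a complex of free $k$-modules, tensoring preserves this quasi-isomorphism, and the desired $\Tor$ groups appear on $E_1$ with no appeal to K\"unneth.  Your rank filtration can be refined to achieve the same thing --- essentially by further filtering each graded piece by the canonical truncation $\tau_{\le s}M_p$ --- but as written it stops one step short.
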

		
		We give more details in Proposition \ref{SS}, and we  describe the way in which the spectral sequence is functorial.  Heuristically, Proposition \ref{vagueSS} expresses that  since $M$ may be constructed from $H^i(M_p)$ by taking cones and sums,  $A \otimes_{\widehat P}^\rL  M$ may be constructed from $A \otimes^\rL H^i(M_p)$ by taking cones and sums.  After we pass to homology, this construction assembles into a spectral sequence.  
		

	\subsection{Relationship to other work}
		 Derived tensor products are closely related to homotopy colimits, and homotopy colimit  approaches to analogues of the Goresky--MacPherson formula were studied in  \cite{ziegler1993homotopy}.  The homotopy colimits there take place at the level of topological spaces and describe the homotopy type of the union of the closed subsets; information about the cohomology of the open complement is obtained by Alexander duality.    An equivariant version of the Goresky--MacPherson formula  is given in \cite{sundaram1997group}, an arithmetic version is proved in \cite{bjorner1997subspace}, and a version for local cohomology in commutative algebra is found in \cite{montaner2003local}.   Theorem \ref{resolution} is related to these results, but we do not believe that it has been stated before.   
	
		  In \cite{petersen2016spectral},  Petersen gives a spectral sequence that converges to the Borel--Moore homology or compactly supported cohomology of a sheaf $\cF$ on the top stratum of a stratified space, which relates to the work of Getztler on the compactly supported cohomology of configuration spaces\cite{getzler1999resolving}   While Petersen gives results on the behavior of the Borel--Moore homology for configuration spaces of $X$, we study ordinary cohomology.   After establishing the results in this paper, we have found that our situation can be related to Petersen's by Verdier duality, and so Theorem  \ref{resolution} and \ref{vagueFISS} can be established using the arguments in \cite{petersen2016spectral}.  However, they were not known before our work, and we believe the spectral sequence of Theorem \ref{vagueFISS} gives the first technique for computing the cohomology of $\Conf_n(X)$ for a general Hausdorff topological space $X$.  

		The material in Section \ref{backgroundposet} is not original.   We cover it in detail because we want to establish conventions for the derived functors of tensor products over posets, which is a bit subtle for $\Sh(Y)$ since it does not have enough projectives.  We avoid working in the derived category to make our spectral sequences functorial and ensure a good theory of derived tensor products.  

		  In Section 2, we work in the generality of a topological space $Y$ with closed subsets indexed by a meet lattice ${\widehat P}$, because we are interested in applications to global arrangements associated with other families of lattices and other indexing categories.  Some interesting generalizations of $\FI$ have been introduced.   For instance, Gadish  defines the notion of categories of $\FI$-type and studies subspace arrangements indexed  by $\FI$-type categories \cite{gadish2016representation}.  And the $\FI_W$ categories of \cite{wilson2014fiw} have been used to study arrangements with different structure posets in \cite{bibby2016representation}. 
		  
 	\subsection{Questions}
 	 	 What are the generators and relations for the $\FI$ modules that appear in $H^i(\Conf_\bdot(X), k)$, and when does stability occur?    We have recently proved that for any topological space $X$ satisfying the Kunneth formula, the spectral sequence for $\Conf_n(X \times \R^1)$ degenerates at the $E_1$ page, and plan to demonstrate this in future work.  Apart from this case, we have not found explicit bounds on the stable range.  
 	 	 
 	 	   Does there exist a finite $CW$ complex $C$ and $j > i \in \bbN$ such that $H^{j}(\Conf_\bdot C)$ is nonzero and finitely generated, but $H^i(\Conf_\bdot C)$ is 0?   Does there exist a  $C$ such that $H^{0}(\Conf_\bdot C) \oplus H^1(\Conf_\bdot C)$ is finitely generated, but $H^{2}(\Conf_\bdot C)$ is not finitely generated?
 	 	 

	\subsection{Acknowledgments}
		I thank Daniel Barter, Trevor Hyde, Andrew Snowden, Jeremy Miller, Bhargav Bhatt, and Nir Gadish for helpful conversations.  I especially thank John Wiltshire-Gordon for motivation, and for suggesting the reformulation in terms of the dualizing complex.  
		
	\section{Lattice Spectral Sequences} \label{lss}

		\subsection{Background on poset homological algebra} \label{backgroundposet}

			Let ${\widehat P}$ be a finite poset with top element $\hat 1$, and let $k$ be a commutative ring. We write $P$ for $\widehat P - \hat 1$. We consider ${\widehat P}$ as a category: its objects are the elements of ${\widehat P}$, and there is a unique map from $p \in {\widehat P}$ to $q \in {\widehat P}$ whenever $p \leq q$.   We will  study representations of ${\widehat P}$ in the category $\Sh(X)$ of sheaves of $k$ modules on a topological space $X$.    When $X$ is a point, we recover representations of ${\widehat P}$ in $\Mod k$.  We will write $\ku$ for the locally constant sheaf corresponding to $k$. 
			
			\begin{defn}\label{Pcat}
				 A \emph{representation of ${\widehat P}$ in $\Sh(X)$}, $M$,  is a functor from ${\widehat P}$ to $\Sh(X)$.  For each $p \in {\widehat P}$, $M$ gives a sheaf $M_p$, and for each $p \leq q$, a map  $M_{pq}:  M_p \to M_q$ such that if $p \leq q \leq r$ we have $M_{pq} M_{qr} = M_{pr}$, and $M_{pp} = \id$.  We will also say that $M$ is a ${\widehat P}$ \emph{module}, or an object of $\Sh(X)^{\widehat P}$.  We write $\Ch(\Sh(X))^{\widehat P} = \Ch(\Sh(X)^{\widehat P})$ for the category of chain complexes of ${\widehat P}$ representations in $\Sh(X)$.
			\end{defn}

			\begin{defn}[Free modules]
				The \emph{free module} on an object $p$ of ${\widehat P}$ is denoted $\ku {\widehat P}(p, -)$.  We have $\ku {\widehat P}(p, -)_q := \ku {\widehat P}(p,q)$, the  locally constant sheaf of free $k$ modules on the set of maps from $p$ to $q$.  When $p \leq q$,  we have $\ku {\widehat P}(p,q) = \ku$ and $\ku {\widehat P}(p,q)= 0 $ otherwise.  
			\end{defn}
			
			Free modules satisfy the following universal property.  We write it in terms of the dual poset ${\widehat P} \op$, because we will use it most often in this context:  
			
			\begin{ex}[Yoneda]
				Let $N$ be a ${\widehat P}\op$ representation. Then $\Hom_{{\widehat P}\op} ({\ku} {\widehat P}(-, p), N) = N_p$.  In particular, when $p \leq q$ we refer to the map $\ku {\widehat P}(-, p) \to \ku {\widehat P}(- ,q)$ corresponding to $p\leq q \in {\ku} {\widehat P}(p,q)$ as \emph{multiplication by} $p\leq q$.  If we apply $\Hom_{{\widehat P} \op}(-, N)$  we get the map $N_{pq}: N_q \to N_p$ which is the action of $p \leq q$.  
			\end{ex}
			
			\begin{defn}
				Let $S_q$ denote the representation of ${\widehat P}$  defined by $(S_q)_p = \underline {k}$ for  $ p = q$ and $(S_q)_p = 0$ otherwise.   The same definition gives a representation of ${\widehat P} \op$, and we will refer to these representations by the same name.    When $X$ is a point and $k$ is a field, the $\{S_q\}_{q \in {\widehat P} }$ are exactly the simple representations of ${\widehat P}$.

			\end{defn}
			
			Next we define the tensor product operation, which takes a ${\widehat P} \op$ representation and a ${\widehat P}$ representation and produces a sheaf on $X$.  
			\begin{defn}[Tensor products]\label{posettensor}
				Let $M$ be a representation of ${\widehat P} \op$ in $\Sh(X)$, and let $N$ be a representation of ${\widehat P}$ in $\Sh(X)$.  Then the tensor product is  the coequalizer $$M \otimes_{\widehat P} N =  { \rm coeq } (\bigoplus_{p_1 \leq p_2} M_{p_2} \otimes N_{p_1}  \coeq{ M_{p_2p_1} \otimes 1}{1\otimes N_{p_1 p_2}}  \bigoplus_{p \in {\widehat P}}  M_p \otimes N_p)$$ where the first map takes the summand $M_{p_2} \otimes N_{p_1}$ to $M_{p_1} \otimes N_{p_1}$ by multiplication on the right, and the second takes $M_{p_1} \otimes N_{p_2}$ to $M_{p_2} \otimes N_{p_2}$ by   multiplication on the left.  If $M, N$ are chain complexes of ${\widehat P} \op, {\widehat P}$ representations, then their tensor product is a chain complex, where the symbol $\otimes$ in the above definition is interpreted as the tensor product of $\Ch(\Sh(X))$. 
			\end{defn}
			
			\begin{ex}[Co-Yoneda]
				Tensoring with a free module on an element $q$ of ${\widehat P}\op$ is the same as evaluating at that element: $\underline {k} {\widehat P}(-, q) \otimes_{\widehat P} N = N_q$. 
				If we apply $- \otimes_{\widehat P} N$ to the the multiplication by $p \leq q$ map $\ku {\widehat P}(-,p) \to \ku {\widehat P}(-, q) $  we get the map $N_{pq}: N_p \to N_q$, the action of $p \leq q$.  
			\end{ex}
			
			The previous example and the right exactness of $- \otimes_{\widehat P} N$ gives a way of computing tensor products that is often easier than Definition \ref{posettensor}.
			\begin{ex}[Cokernels from presentations]
				If we have a presentation of the ${\widehat P} \op$ module $M$ as the cokernel of a map of free modules $$f: \bigoplus_p V_p \otimes \ku {\widehat P}(-, p) \to \bigoplus_q V_q \otimes \ku {\widehat P}(-, q),$$ determined by maps $f_{pq}: V_p \to V_q$ for $p \leq q$ ,  then $$ M \otimes_{\widehat P} N = \coker(\bigoplus_p V_p \otimes N_p \to \bigoplus_q V_q \otimes N_q),$$ with the map $V_p \otimes N_p \to V_q \otimes N_q$ given by $f_{pq} \otimes N_{pq}$ if $p \leq q$ and by $0$ otherwise. 
			\end{ex}
	
			Many functors are given by tensoring with a ${\widehat P} \op$ representation.  For instance, when $X$ is a point, all left adjoint functors $\Mod k^{\widehat P} \to \Mod k$ arise uniquely in this way. Colimits are another example:
			
			\begin{ex}[Colimits as tensor products]
				Let $\underline {k} \{*\}_{P}$ denote the representation of ${\widehat P} \op$ which is $0$ at $1$,  $\underline {k}$ at every $p < \hat 1$, and for which every map $p \leq q$ for $p,q > 0$ acts by the identity. Then $\colim_{P} M = \underline {k} \{*\}_{P} \otimes_{\widehat P} M.$
			\end{ex}

			\begin{defn}[Pullback from a point]
				Let $A$ be a representation of ${\widehat P} \op$ in $k$ modules. Then for $X$ a topological space, we define $\underline {k} A $ to be $ \underline {k} \otimes_{{k}}  A$,  in other words the \emph{pullback} of $A$ to $\Sh(X)$.  Pullback replaces each $k$ module in the representation by the corresponding locally constant sheaf.  
			\end{defn}
			
			We use the derived functors of $\underline {k} A \otimes_{{\widehat P}} - $ for  $A$ in $\Sh(X)^{\widehat P \op}$.  In the next defintion we give our conventions for derived tensor products.   In summary, although $\Sh(X)^{\widehat P}$ does not have enough projectives,  $\ku A$ admits a resolution by free modules, which we use to define $\ku A \otimes_{\widehat P}^\rL -$.  
			
			\begin{defn}[Derived functors of $\underline {k} A \otimes -$]
				Any resolution of $A$ by free ${\widehat P} \op$ representations, $F \simto A$, yields a resolution $ \underline {k} F \simto   \underline {k} A$.   For $M$ in $\Ch(\Sh(X)^{\widehat P})$, we say that $\ku F \otimes_{\widehat P} M$ is a \emph{model} for $\ku A \otimes_{\widehat P}^L M$. If $B$ is a ${\widehat P}$ representation in $k$ modules, we write $\Tor^{\widehat P}_i(A, B)$ for the $k$ module $H_i(F \otimes _{\widehat P} M) $ 
				
				 Given any two free resolutions $F,F'$ of $A$, there exists a quasi-isomorphism $x: F \to F'$ that extends the identity map on $A$, which is unique up to homotopy.  Let $x'$ be a homotopy inverse to $x$.  Then  $ \ku x \otimes 1:  \ku F \otimes_{\widehat P} M \simto  \ku F' \otimes_{\widehat P} M $ has a homotopy inverse $\ku x' \otimes 1$, and in particular is a quasi-isomorphism.  
				 
				 Given a map of posets $g:  Q \to {\widehat P}$, a free resolution $F \simto A$ over ${\widehat P}$ and a free resolution $G \simto g^* A$ over $Q$, we have a quasi-isomorphism $g^*F \simto g^* A$, and hence a map $j: G \to g^*F$ extending the identity on $g^*A$,  which is unique up to homotopy.  Thus given  $M,N$  objects  of $\Ch(\Sh(X))^{\widehat P}, \Ch(\Sh(Y))^Q$,  and a map $x: N \to g^* M$ we obtain a map $\ku G \otimes_Q N \to \ku F  \otimes_{\widehat P} M$, representing $\ku A \otimes_Q^L N \to \ku A \otimes_{\widehat P}^L M$,  given by the composition $$ \ku G \otimes_Q N  \to^{1 \otimes x}  \ku G \otimes_Q G^* M  \to^{1 \otimes j} \ku g^* F \otimes_Q g^*M \to \ku F \otimes_{\widehat P} N . $$ For any other two resolutions $F', G'$, and comparison maps $j': G' \to g^*F'$,  $x: F \to F'$ and $y : G \to G'$, we have that $j  \circ y$ is homotopic to $g^* x \circ j$,  by the uniqueness of $j$ up to homotopy. 
				 %
				 \QED
			\end{defn}

			 The next example suggests how the homology groups $\Tor_\bdot^{\widehat P}(S_{\hat 1}, S_p)$ are related to configuration spaces.   
			 \begin{exa}[Partition lattice]\label{partition}
				For ${\widehat P} = \rP(3)$ the partition lattice on $3$ elements, the top element $\hat 1$ is the discrete partition $\{ 1\}\{2\}\{3\}$. We have a resolution of $S_{\{1\}\{2\}\{3\}}$ given by $$ {k} {\widehat P}(-, \{1\}\{2\} \{3\}) \leftarrow^{d_0 }  {k} {\widehat P} ( -, \{1\}\{23\}) \oplus {k} {\widehat P}( -, \{2\}\{13\}) \oplus {k} {\widehat P}( -, \{3\}\{12\}) \leftarrow^{d_1} {k} {\widehat P}(-, \{123\}) \otimes {k}^2.$$ 
				 The differential $d_0$ takes the summand $k\widehat P(-, \{r\}\{st\})$ to $k\widehat P (-, \{123\})$ by multiplication by $\{r\}\{st\} \leq \hat 1$.  In other words, it is given by the row vector:  $$d_0 = \onethree{\{1\}\{23\} \leq \hat 1}{\{2\}\{13\} \leq \hat 1}{\{3\}\{12\} \leq \hat 1}.$$ The differential $d_1$ is given by the matrix:  $$\threetwo{\{123\} \leq \{1\}\{23\}}{0}{-\{123\} \leq \{2\}\{13\}} {\{123\} \leq \{2\}\{13\}}{0}{-\{123\} \leq \{3\}\{12\}}. $$  
				 
				 We note that the $0$th free module in this resolution has rank $1$, the first has rank $3$ and the second has rank $2$.  We may assemble this data into a Poincar\'e polynomial $1 + 3 q + 2 q^2$, which agrees with the Poincar\'e polynomial of $\Conf_3(\bbR^2)$. 
				 
				 The symmetric group $\bS_3$ acts on ${\widehat P}$, so we may consider $\widehat P$ modules with an $\bS_3$ action that is compatible with the action on $\widehat P$.  If we let $\bS_3$ act trivially on $S_{\{1\}\{2\}\{3\}}$ we may upgrade the free resolution to one that is equivariant for the $\bS_3$ action.  Then $\bS_3$ acts trivally on the generators of the $0$th free module;  the generators of the first free module are the permutation representation of $\bS_3$; and the generators of the second free module are the standard representation of $\bS_3$.   This  corresponds to the equivariant Poincar\'e polynomial $s_3 + (s_3 + s_{2,1}) q +  s_{2,1} q^2$, which agrees with the equivariant  Poincar\'e polynomial of $\Conf_3(\bbR^2)$. 
				
				To obtain a model for $S_{\{1\}\{2\}\{3\}} \otimes^\rL_{\rP(3)} S_{\{123\}}$ from this free resolution, we tensor it with $S_{\{123\}}$.  All of the terms in the free resolution except for the last become zero, and we have that $\Tor_2(S_{\{1\}\{2\}\{3\}}, S_{\{123\}}) = k^2$, or equivariantly the standard representation $(2,1)$.  
				\QED
			\end{exa}

			The next example shows that a uniform resolution of $S_{\hat 1}$ exists.  

			\begin{exa}[Reduced bar resolution and strict functoriality]\label{barresolution}
				Let $C^s_p$ be an object of $\Ch(\Sh(X)^P)$.  We have a uniform construction of $\ku S_{\hat 1} \otimes^\rL_{\widehat P} C$ coming from the  reduced bar resolution of $S_{\hat 1}$:  $${k} {\widehat P}(-, 1)  \leftarrow \bigoplus_{p_1 < \hat 1}  {k} {\widehat P}(-, p_1) \leftarrow \bigoplus_{p_2 < p_1 < \hat 1} {k} {\widehat P}(-, p_2) \leftarrow \dots    $$
				The $i$th term of this resolution, $\bar B_i$, is the sum over all strict length $i$ chains:  $$\bar B_i = \bigoplus_{p_i < p_{i-1} < \dots < p_1 < \hat 1} {k} {\widehat P}(-, p_i)$$  and the differential $d_i = \sum_{i = 0}^r  (-1)^r\delta_r^i$ comes from a semi-simplical $k$ module.  When $r = 0$, define $\delta_r = 0$.  When $ 0< r < i$, the map $\delta^i_r :\bar B_i \to \bar B_{i-1}$   takes summand corresponding $p_i < \dots < p_r < \dots  < \hat 1$  to the summand $p_i < \dots \widehat p_r < \dots < \hat 1$ by the identity map. When $r = i$,  the map $\delta^i_r$ takes the summand $p_i < \dots  < \hat 1$,  ${k} {\widehat P}(-, p_i)$,  to the summand $p_{i-1} < \dots < \hat 1$, ${k}{\widehat P}( - , p_{i-1})$,  by the multiplication by $p_{i-1} \leq {p_i}$
			
			This resolution is exact because it is the reduced chain complex of the simplical $k$ module $B$ ,  with $B_i = \{p_1 \leq \dots \leq p_i \leq 1 \}$.   The bar construction $B$ is contractible, as in \cite{weibel1995introduction} Chapter 8.   
			
			Tensoring with $C$  we get a model for $\ku S_{\hat 1} \otimes^\rL_{\widehat P} C $, the totalization of the bicomplex:
			$$ (s, -t) \mapsto  \bigoplus_{p_t < p_{t-1} < \dots < p_1 < \hat 1} C^s_{p_t}, $$ with differential in the $s$ variable given by $d_C$ and differential in the $t$ variable  $d_t =  \sum_{i = 0}^r  (-1)^r\delta_r^t$.  When $r = 0$, $\delta_0 = 0$;  for $ 0< r < t$, $\delta_r^t$   takes the summand corresponding $p_t < \dots < p_r < \dots  < \hat 1$  to the summand $p_t < \dots \widehat p_r < \dots < \hat 1$ by the identity map; and when $r = t$,  the map $\delta^t_t$ takes the summand $p_t< \dots  < \hat 1$,  $M_{p_t}$  to the summand $p_{i-1} < \dots < \hat 1$, $M_{p_{i-1}}$  by the action of $p_{i-1} \leq {p_i}$
			
			 Take a $f: Q \to {\widehat P}$  a map between posets with top element such that $f \inv(1) = 1$,  $D$  a chain complex of $Q$ representations, and  $x: D \to f^* C$.   We obtain a representative for $S_{\hat 1} \otimes^\rL_Q D \to S_{\hat 1} \otimes^\rL_{\widehat P} C$ from the map $$\bigoplus_{q_r < \dots < q_1 < \hat 1} D^i_{q_r} \to \bigoplus_{q_r < \dots < q_1 < \hat 1} C^i_{fq_r} \to \bigoplus_{p_r < \dots < p_1 < \hat 1} C^i_{p_r,}$$ where the last map takes the summand corresponding to a chain  $q_r < \dots < q_1 < \hat 1$ to $fq_r < \dots < fq_1 < \hat 1$ if $fq_i \neq fq_j$ for any distinct $i,j \in \{1, \dots, r\}$  and takes  the summand to $0$ otherwise.  \QED 
			 
			\end{exa}
			
			We may use the bar complex to compare the derived tensor product with poset homology.  
			
			\begin{prop}[Comparison with poset homology] \label{pohomcompare}
			 	When $q < \hat 1$,  the reduced bar complex for $S_{\hat 1} \otimes^\rL_{\widehat P} S_q [2]$ is isomorphic to the complex of chains $C_*((q, 1))$, see \cite{wachs2006poset}.  In particular, we have  $\Tor_i(S_{\hat 1} ,S_q) = \tilde H_{i-2}((q,1), k)$ where $\tilde H_*$ is the reduced poset homology.  
			\end{prop}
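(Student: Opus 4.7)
The plan is to unwind the reduced bar complex of Example~\ref{barresolution} after tensoring with $S_q$ and recognize the result as the augmented simplicial chain complex of the order complex $\Delta((q, \hat 1))$; the proposition then follows by taking homology.

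First I would apply the co-Yoneda identity $\ku\widehat P(-, p) \otimes_{\widehat P} S_q = (S_q)_p$ termwise to the bar resolution. Since $(S_q)_p$ is $k$ when $p = q$ and $0$ otherwise, only summands of $\bar B_i$ whose bottom element $p_i$ equals $q$ survive. In particular $\bar B_0 \otimes S_q = (S_q)_{\hat 1} = 0$ (as $q < \hat 1$), while for $i \geq 1$,
$$\bar B_i \otimes_{\widehat P} S_q \;=\; \bigoplus_{q = p_i < p_{i-1} < \cdots < p_1 < \hat 1} k.$$
Each surviving chain is determined by its sequence of intermediate elements $p_{i-1} < \cdots < p_1$ lying in the open interval $(q, \hat 1)$, which is precisely an $(i-2)$-simplex of $\Delta((q, \hat 1))$. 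For $i = 1$ the only chain is $q < \hat 1$, corresponding to the empty simplex, i.e.\ the augmentation summand $C_{-1}$. This gives a degreewise isomorphism with the augmented simplicial chains, once we absorb the offset via the shift $[2]$ that takes bar degree $i$ to simplicial degree $i-2$.

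Next I would match the bar differential $d_i = \sum_r (-1)^r \delta_r^i$ with the simplicial boundary. The face $\delta_0^i$ is zero by definition. For $0 < r < i$, $\delta_r^i$ omits $p_r$ via the identity on coefficients, so it carries our surviving summand (with $p_i = q$) to another surviving summand; under the identification above this deletes the $r$-th intermediate vertex of the simplex. Finally $\delta_i^i$ acts by the multiplication map $\ku\widehat P(-, q) \to \ku\widehat P(-, p_{i-1})$ associated to $p_{i-1} \leq q$, which after tensoring with $S_q$ becomes the structure map $(S_q)_q \to (S_q)_{p_{i-1}}$; since $p_{i-1} > q$ the target is zero. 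Assembling the surviving faces $\delta_r^i$, $0 < r < i$, recovers the simplicial boundary operator of $\Delta((q, \hat 1))$ (up to re-indexing vertices, which affects only an overall sign of the differential and preserves the complex).

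This produces the claimed isomorphism of complexes $\bar B \otimes_{\widehat P} S_q[2] \iso C_*((q, \hat 1); k)$, and taking $H_i$ yields $\Tor_i(S_{\hat 1}, S_q) = \tilde H_{i-2}((q, \hat 1), k)$. The argument is essentially a direct unwinding of definitions and there is no serious technical obstacle; the one point requiring care is the vanishing of the bottom face $\delta_i^i$, which is the mechanism by which the bar differential collapses onto the simplicial boundary of the open interval and eliminates any dependence on how $q$ sits inside $\widehat P$ below itself.
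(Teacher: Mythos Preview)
Your argument is correct: tensoring the reduced bar resolution with $S_q$ kills all summands except those with $p_i = q$, the face $\delta_i^i$ vanishes because its target lies in $(S_q)_{p_{i-1}} = 0$, and what remains is precisely the augmented simplicial chain complex of the order complex of $(q,\hat 1)$, shifted by two. The paper itself does not supply a proof of this proposition---it is stated immediately after the bar resolution example and left to the reader (with a pointer to \cite{wachs2006poset})---so your write-up is exactly the kind of verification the paper omits.
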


			For specific posets, there  often are smaller resolutions than the bar complexes that give models for $S_{\hat 1} \otimes^\rL - $.

			\begin{exa}[Products]
				Let ${\widehat P}, \widehat Q$ be posets with top elements $\hat 1_{\widehat P},  \hat 1_{\widehat Q}$. Given a representation of ${\widehat P}$ and a representation of $\widehat Q$ we can tensor them over ${k}$ and form a representation of ${\widehat P} \times \widehat Q$.  We denote this tensor product by $\boxtimes$. Then $S_{\hat 1_{{\widehat P} \times \widehat Q}} = S_{\hat 1_{\widehat P}} \boxtimes_{{k}}  S_{\hat 1_{\widehat Q}}$. For any two elements $p,q$ we have $ {k}({\widehat P} \times \widehat Q) (-, p \times q) = {k} {\widehat P}(-,p) \boxtimes {k} \widehat Q(-,q)$.  So from  free resolutions $F \simto S_{\hat 1_{\widehat P}}$ and $G \simto S_{\hat 1_{\widehat Q}}$ we obtain a map $F \boxtimes G \simto S_{\hat 1_{{\widehat P} \times  \widehat Q}}$, which is a quasi-isomorphism because  the underlying $k$ modules are free.  
			\end{exa}
			
			\begin{exa}[Boolean poset]
				Let $B([n])$ be the poset of  subsets of $[n] = \{1, \dots, n\}$ ordered by reverse inclusion. Giving a $B([n])$ representation is the same as giving a $\N^n$ graded ${k}[x_1, \dots, x_n]$ module, concentrated in degrees $\{0,1\}^n$.   We have that $B([n])$ is isomorphic to the $n$ fold product of the two element poset $0 \leq 1$.   For $0 \leq 1$, the simple $S_{\hat 1}$ is resolved by ${k} {\widehat P}(-, 1) \leftarrow {k} {\widehat P} (-, 0)$.  From the above, we get a Koszul resolution of $S_{\hat 1}$ for $B([n])$.   
				
				If $C_I^s$ is a chain complex of $B([n])$ representations, then $S_{\hat 1} \otimes_{\widehat P} C$ is the totalization of the bicomplex:   $$(s,-t) \mapsto  \bigoplus_{I \subset [n], ~ |I| = t}  {j_I}_* C_I^s|_{U_I} $$  with $d_t = \sum_{i = 1}^{n} (-1)^i x_i$ and $d^s = d_C^s$
				 				
			\end{exa}

			\begin{defn}
				We call a free resolution \emph{finite}  when it has finitely many terms and each is finitely generated.  
			\end{defn}
			
			 The following proposition is not necessary for what follows, but the proof gives an algorithm that useful for computing small resolutions.  For the boolean poset and the partition poset, this process gives free resolutions isomorphic to the ones described above.  
			\begin{prop}  [Minimal Free Resolutions]
				When $k$ is a field, every representation $A$ of a finite poset ${\widehat P} \op$ admits a finite free resolution $F \simto A$ such the differential of the chain complex $F \otimes_{\widehat P} S_p$ vanishes for any $p \in {\widehat P}$.		\end{prop}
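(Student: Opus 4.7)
The plan is to build $F$ stage by stage via a ``minimal generators'' algorithm, verify vanishing of the induced differentials, and bound the length.

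\emph{Construction.} For a $\widehat P\op$ representation $M$, I first observe that $M \otimes_{\widehat P} S_p$ computes the quotient $M_p / \sum_{q > p} \mathrm{im}(M_q \to M_p)$, which I interpret as the space of ``minimal generators of $M$ at $p$.'' Since $k$ is a field, I choose a basis of each $M \otimes_{\widehat P} S_p$ and lift to $M_p$; these lifts determine a map
\[
F_0(M) \;:=\; \bigoplus_{p \in \widehat P} (M \otimes_{\widehat P} S_p) \otimes k\widehat P(-, p) \;\longrightarrow\; M.
\]
This is surjective: letting $C$ be its cokernel, right exactness of $-\otimes_{\widehat P} S_p$ forces $C \otimes S_p = 0$ for every $p$, and a maximal $p$ with $C_p \neq 0$ would yield $C \otimes S_p = C_p \neq 0$, a contradiction. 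Applying this to $A$, I set $F_0 = F_0(A)$, $K_0 = \ker(F_0 \to A)$, and iterate $F_{n+1} := F_0(K_n)$ with differential $F_{n+1} \twoheadrightarrow K_n \hookrightarrow F_n$.

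\emph{Minimality.} By construction $F_0 \otimes S_p \to A \otimes S_p$ is an isomorphism for each $p$, so by right exactness of $-\otimes S_p$ the inclusion $K_0 \hookrightarrow F_0$ becomes zero after tensoring with $S_p$. The differential $F_1 \to F_0$ factors through $K_0$, so its image in $F_0 \otimes S_p$ vanishes. The same argument at each stage yields vanishing of every differential of $F \otimes_{\widehat P} S_p$.

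\emph{Termination} is the main obstacle, and requires a global bound on projective dimension. My plan is to prove by induction on the length of the longest chain in $\widehat P_{\leq p}$ that each simple $S_p$ has finite projective dimension: if $p$ is minimal then $S_p = k\widehat P(-, p)$ is already free; otherwise the kernel of $k\widehat P(-, p) \twoheadrightarrow S_p$ is supported strictly below $p$ and admits a finite filtration by simples $\{S_q\}_{q < p}$, each of finite projective dimension by induction. Since any $A$ with each $A_p$ finite-dimensional is a finite iterated extension of simples, d\'evissage yields finite projective dimension for $A$. Hence the algorithm terminates, producing a finite resolution of length at most the longest chain in $\widehat P$.
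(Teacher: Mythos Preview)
Your construction and your verification of minimality are the same as the paper's (in fact you are more careful: the paper asserts surjectivity and never explicitly checks that the differentials die after $-\otimes_{\widehat P} S_p$, whereas you do both). The one genuine difference is the termination argument.

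The paper argues directly that the support shrinks: if $q$ is maximal in $\widehat P$ with $A_q \neq 0$, then $(NA)_q = A_q$, the only free summand contributing at degree $q$ is $A_q \otimes k\widehat P(-,q)$, and the cover is an isomorphism there; hence $K_1$ vanishes at $q$ and the support of successive kernels strictly decreases. This is entirely elementary and avoids any appeal to projective dimension.

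Your route via the projective dimension of simples is also correct and buys a sharper length bound (the longest chain in $\widehat P$ rather than $|\operatorname{supp} A|$). Two small points are worth noting. First, the passage ``finite projective dimension $\Rightarrow$ the minimal resolution terminates'' uses the standard fact that in a minimal resolution the $d$th syzygy is projective (hence free here, since indecomposable projectives are the $k\widehat P(-,p)$) and the next cover is an isomorphism; you might state this explicitly. Second, your insertion of the hypothesis that each $A_p$ is finite-dimensional is not a weakening but a necessary correction: the paper's definition of ``finite free resolution'' requires each term to be finitely generated, so the proposition as stated cannot hold for arbitrary $A$, and the paper's own proof silently assumes this as well.
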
 
			\begin{proof}
				Mimicking the situation of  modules over a graded ring, we may construct a ``minimal'' free resolution.  Let $NA$ denote the vector space, graded b
				y the elements of ${\widehat P}$ such that in degree $p$ we have $$(NA)_p = \frac{A}{\sum_{pq} \im A_{pq}},$$ or in other words $NA =\bigoplus_{p \in {\widehat P}}  A \otimes_{\widehat P} S_p$, where $S_p$ is the simple ${\widehat P}$ rep which is one dimensional at $p$ and zero elsewhere.  Pick a basis for this vector space $\{ e_i^p\}_{p \in {\widehat P}, i = 1, \dots, n_p}$, and then choose representatives $\{a_i^p\}$ for each basis vector in $A$.  These generate $A$, and so we get a surjection $\oplus_{i,p} k {\widehat P}(-, p) \onto A$.  This is the first step in the free resolution.  If $A \neq 0$ then the kernel of this surjection $K_1$  must be zero in at least one degree where $A$ was nonzero (any degree $q$ such that $A_{q'} = 0$ for all $q' \geq q$), and so the process terminates.  
			\end{proof}
			Over a field, there is a dual theory for minimal injective resolutions of poset representations.  
			
			In addition to the derived tensor product with $S_{\hat 1}$, we will use the derived functors of the colimit over $P = \widehat P - \hat 1$.  
			\begin{defn}[Homotopy colimit] 
				 Let $M$ be an object of $\Ch(\Sh(X))^{\widehat P}$. The \emph{homotopy colimit of $M$ over $O$} is is the left derived functor of   $M \mapsto \colim_{{\widehat P} - 1} M|_{P}$.  We denote it by  $\hocolim_{P}  M$.   There is a map $\hocolim_{P} M \to M_1$ given by $\hocolim_{{\widehat P} - 1}M \to \colim_{P} M|_{P} \to M_1$ and we say that the representation $M$ is ${\widehat P}$-\emph{exact}, if this map is a quasi-isomorphism.  
	
			\end{defn}

			\begin{prop}[Comparison with $S_{\hat 1} \otimes_{\widehat P}^L - $] \label{cone}
			 	We have that $S_{\hat 1} \otimes_{\widehat P}^L M \simeq \cone(\hocolim_{P} M \to M_1)$.  More precisely for any free resolution  $F$ of ${k} \{*\}_{P}$ with canonical map $F \otimes_{\widehat P} M  \to \colim_{P} M \to M_1$ , we have that  $\cone(F \otimes_{\widehat P} M \to M_1)$  is a model for  $S_{\hat 1} \otimes_{{\widehat P}}^L M$.  In particular, $M$ is an exact representation if and only if $ S_{\hat 1} \otimes^\rL_{\widehat P} M \simeq 0$.
			\end{prop}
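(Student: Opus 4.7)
The plan is to exhibit a free resolution of $S_{\hat 1}$ built as a mapping cone on top of any chosen free resolution of $k\{*\}_P$, and then simply tensor with $M$.

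First, I would observe that because $\hat 1$ is the top element of $\widehat P$, the constant ${\widehat P}\op$ representation $\underline{k}$ coincides with the free module $\ku\widehat P(-,\hat 1)$, so the Yoneda example gives $\underline{k}\otimes_{\widehat P} M = M_{\hat 1}$. Then I would write down the short exact sequence of $\widehat P\op$ representations
$$0 \longrightarrow k\{*\}_P \xrightarrow{\ \iota\ } \underline{k} \xrightarrow{\ \pi\ } S_{\hat 1} \longrightarrow 0,$$
where $\iota$ is the identity at each $p<\hat 1$ and zero at $\hat 1$, and $\pi$ is the identity at $\hat 1$ and zero elsewhere. The verification that $\iota$ and $\pi$ are ${\widehat P}\op$-equivariant is a routine case-check on arrows $q\to p$ (both in $P$, or $q=\hat 1$, $p\in P$, or $q=p=\hat 1$).

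Given a free resolution $F\simto k\{*\}_P$, the composite $F\to k\{*\}_P \xrightarrow{\iota}\underline{k}$ has mapping cone $\widetilde{F}$. Because the short exact sequence identifies $\cone(\iota)\simto S_{\hat 1}$ and because every term of $\widetilde F$ is a free ${\widehat P}\op$ module (each $F_i$ and $\underline{k}$ are), $\widetilde F$ is a free resolution of $S_{\hat 1}$. Applying the additive functor $-\otimes_{\widehat P} M$ commutes with mapping cones, and using $\underline{k}\otimes_{\widehat P} M = M_{\hat 1}$ I get
$$S_{\hat 1}\otimes^{\rL}_{\widehat P} M \ \simeq\ \widetilde F\otimes_{\widehat P} M \ =\ \cone\bigl(F\otimes_{\widehat P} M \to M_{\hat 1}\bigr),$$
which is exactly the cone appearing in the statement. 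I would then check that the map appearing here agrees with the canonical one $F\otimes_{\widehat P} M\to k\{*\}_P\otimes_{\widehat P}M = \colim_P M|_P \to M_{\hat 1}$: the first arrow comes from the augmentation $F\to k\{*\}_P$, and the colimit comparison map to $M_{\hat 1}$ is induced by the structure maps $M_{p,\hat 1}$, which is precisely how $\iota\otimes 1$ acts under the co-Yoneda identification.

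The final statement is then immediate: since $F\otimes_{\widehat P} M$ is by definition a model for $\hocolim_P M$, the representation $M$ is $\widehat P$-exact iff the comparison $\hocolim_P M\to M_{\hat 1}$ is a quasi-isomorphism, iff its cone is acyclic, iff $S_{\hat 1}\otimes^{\rL}_{\widehat P} M\simeq 0$. The only mild subtlety in the whole argument is matching the two descriptions of the canonical map; the rest is the formal manipulation of a cone sequence, so I do not anticipate a serious obstacle.
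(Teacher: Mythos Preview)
Your proposal is correct and follows essentially the same route as the paper: the short exact sequence $0 \to k\{*\}_P \to k\widehat P(-,\hat 1) \to S_{\hat 1} \to 0$, the cone-over-$F$ free resolution of $S_{\hat 1}$, and then tensoring with $M$. You add a useful explicit check that the induced map is the canonical one and spell out the ``in particular'' clause, but the argument is the same.
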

			\begin{proof}
				We have a short exact sequence of ${\widehat P} \op$ representations:  $ 0 \to \underline {k} \{*\}_{P} \to  k {\widehat P}(-, 1) \to S_{\hat 1}  \to 0$.  So for any resolution $F$ of ${k} \{*\}_{P}$, the cone of $F \to {k} \{*\}_{P} \to kP(-,1)$ is a free resolution of $S_{\hat 1}$.  Tensoring with $M$ gives $S_{\hat 1} \otimes^\rL M \simeq \cone(F \to kP(-, 1)) \otimes_{\widehat P} M  = \cone(F \otimes_{\widehat P} M \to M_1)$ 
			\end{proof}

			The next property says that $\rR \pi_*$ takes derived tensor products to derived tensor products, which is what allows us to use them to compute cohomology.  It is a wonderful feature of the derived world that all operations become exact, and so even right adjoints play nicely with colimits.

			\begin{prop}[ Derived pushforward preserves derived tensor products]\label{hocolimcommutes}
				Let $A$ be an object of $\Mod k^{{\widehat P} \op}$, and let $M$ be an object of   $\Ch(\Sh(X))^{\widehat P}$.  Then for a finite free resolution $F \simto A$, and   $M \simto \cJ$ a resolution of $M$ by $\pi_*$ acyclic sheaves, we have that $F \otimes_{\widehat P} \pi_* \cJ = \pi_* \ku F \otimes_{\widehat P} \cJ$, where the right hand side is a model for $\rR \pi_*(\ku A \otimes^\rL_{\widehat P} M)$ and the  left hand side is a model for $  A   \otimes^\rL_{\widehat P} \rR \pi_* M$ 
			\end{prop}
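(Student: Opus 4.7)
The plan is to verify the stated equality $F \otimes_{\widehat P} \pi_* \cJ = \pi_* (\ku F \otimes_{\widehat P} \cJ)$ directly on the nose, and then observe that this equality identifies each side with the claimed derived functor.  The key point is that when $F$ is a \emph{finite} free resolution, each term of $\ku F \otimes_{\widehat P} \cJ$ is built from $\cJ$ by finitely many operations that $\pi_*$ preserves.

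First I would unpack the terms of $F$.  Each $F_i$ is a finite direct sum $\bigoplus_p V_{i,p} \otimes k{\widehat P}(-, p)$ with $V_{i,p}$ a finitely generated $k$-module.  Applying Co-Yoneda gives
\[
\ku F_i \otimes_{\widehat P} \cJ^j \;=\; \bigoplus_p V_{i,p} \otimes_k \cJ^j_p,
\]
a finite direct sum of tensor products of a finite $k$-module with a $\pi_*$-acyclic sheaf.  Since $\pi_*$ is additive and commutes with finite direct sums, and since tensoring with a constant sheaf on a finite free $k$-module commutes with $\pi_*$, applying $\pi_*$ term by term gives $\bigoplus_p V_{i,p} \otimes_k \pi_* \cJ^j_p = F_i \otimes_{\widehat P} \pi_* \cJ^j$.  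The differentials on both sides are induced by the same maps $V_{i,p} \to V_{i',p'}$ and the differentials of $\cJ$, so the equality is one of bicomplexes, hence of totalizations.

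Next I would verify the interpretation of each side as a model.  The left side $F \otimes_{\widehat P} \pi_* \cJ$ is, by the definition of the derived tensor product in the previous subsection, a model for $A \otimes^{\rL}_{\widehat P} \rR\pi_* M$, since $\pi_* \cJ$ models $\rR \pi_* M$.  For the right side, $\ku F \otimes_{\widehat P} \cJ$ is a model for $\ku A \otimes^{\rL}_{\widehat P} M$: the quasi-isomorphism $\cJ \simto M$ is preserved by $\ku F \otimes_{\widehat P} -$ because $F$ is a bounded complex of free modules and tensoring with a free module $\ku {\widehat P}(-, p)$ is the exact evaluation functor $N \mapsto N_p$.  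It remains to check that $\pi_* (\ku F \otimes_{\widehat P} \cJ)$ computes $\rR \pi_*$ of this model, for which it suffices to show each term $\ku F_i \otimes_{\widehat P} \cJ^j$ is $\pi_*$-acyclic.  This follows from the same finite-sum argument, together with the fact that the totalization of a bounded bicomplex of $\pi_*$-acyclic sheaves is itself $\pi_*$-acyclic (e.g.\ by the hypercohomology spectral sequence).

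The main technical point to be careful about is ensuring that $\pi_*$ really does commute with the full tensor product construction, not just with the direct sums appearing at each bidegree.  The finiteness of the free resolution is essential here, both to reduce to finite sums (where additivity of $\pi_*$ and $R^i\pi_*$ suffices) and to bound the bicomplex so that total-complex acyclicity follows from termwise acyclicity.  I would flag at the start that boundedness of $F$ is where the hypothesis ``finite'' in the statement is used.
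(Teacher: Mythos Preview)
Your proof is correct and follows essentially the same approach as the paper: write each $F_i$ as a finite sum $\bigoplus_p V_i^p \otimes k{\widehat P}(-,p)$, use Co-Yoneda to identify $\ku F \otimes_{\widehat P} \cJ$ as a bicomplex of finite sums of $\pi_*$-acyclics, and then observe that $\pi_*$ commutes with these finite sums termwise, giving the stated equality of bicomplexes. Your added remarks on why each side models the claimed derived functor and on the role of finiteness are a bit more explicit than the paper's version but track the same argument.
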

			\begin{proof}
			
				Write $F_i = \bigoplus_{p \in {\widehat P}}  V^p_i \otimes {k} {\widehat P}( -, p)$ where  the multiplicity space $V^p_i$ is a finitely generated free $k$ module, and the differential $F_i \to F_{i-1}$ is given by maps $d_i^{p \leq q}:V_i^p \to V_{i-1}^q$.  Then since $\pi_*$ commutes with finite direct sums, we have that $\pi_* \ku F \otimes \cJ $ is the totalization of the bicomplex $$(-i,j) \mapsto  \bigoplus_{p \in {\widehat P}} V_i^p \otimes \pi_* \cJ_p^j $$ with differential in the $i$ variable given by $\sum_{p \leq q}  d^i_{p \leq q} \otimes \pi_*\cJ^j_{pq}$ and differential in the $j$ variable given by $d_\cJ$.  Since finite sums of  $\pi_*$ acyclics are acyclic, and $\ku F \otimes_{\widehat P} M \to {\ku} F \otimes_{\widehat P} \cJ$ is a quaisisomorphism, this complex is a model for $\rR \pi_*(\ku A \otimes^\rL_{\widehat P} M)$.   And it is equal to $F \otimes \pi_*  \cJ $, which is a model for $A \otimes^\rL_{\widehat P} \rR \pi_*(M)$
			\end{proof}

			If  $Q$ and ${P}$ are meet lattices, then a meet preserving homomorphism $ \widehat Q \to {\widehat P}$ that takes $\hat 1_Q$ to $\hat 1_P$ has the property that derived tensor products pull back to derived tensor products. 
			\begin{prop}[Transfer] \label{transfer}
				Let $G: \widehat Q \to {\widehat P}$ be a surjective meet-preserving morphism of meet-lattices .  Let  $C$ be an object of $\Ch(\Sh(X))^{\widehat P}$ and $A$ of $\Mod k^{{\widehat P}^{op}}$.  Then $G^*A \otimes_{ \widehat Q}^L  G^*C \to A  \otimes^\rL_{\widehat P} C$ is a quasi-isomorphism for any choice of resolution of $A$ and $G^* A$.  In particular  $C$ is ${\widehat P}$-exact if and only if $G^* C$ is $ \widehat Q$-exact. 
			\end{prop}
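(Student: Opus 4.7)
The plan is to exploit the existence of a left adjoint $l \colon \widehat{P} \to \widehat{Q}$ to $G$, which follows from the hypotheses. For $p \in \widehat{P}$, the set $\{q \in \widehat{Q} : G(q) \geq p\}$ is non-empty by surjectivity and closed under binary meets in $\widehat{Q}$ by meet-preservation of $G$ (using that in a meet-lattice the meet of elements above $p$ remains above $p$), so in finite $\widehat{Q}$ it has a minimum which I define to be $l(p)$. I would verify directly that $l(p) \leq q \iff p \leq G(q)$ and that $G \circ l = \id_{\widehat{P}}$; the latter uses surjectivity to produce $q$ with $G(q) = p$, forcing $G(l(p)) \leq p$ by minimality, while $G(l(p)) \geq p$ is automatic.

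From the adjunction, $G^*$ preserves covariant free modules: $G^*(\ku \widehat{P}(p,-))(q) = \ku\widehat{P}(p, G(q))$ is $\ku$ iff $l(p) \leq q$, so $G^*(\ku\widehat{P}(p,-)) \simeq \ku\widehat{Q}(l(p),-)$. Since $G^*$ is exact and commutes with direct sums, it sends free $\widehat{P}$-resolutions (in the covariant sense) to free $\widehat{Q}$-resolutions. I then compute both derived tensor products by resolving on the $C$-side: choosing a projective $\widehat{P}$-resolution $P_\bullet \to C$, the pullback $G^*P_\bullet \to G^*C$ is a projective $\widehat{Q}$-resolution, so $A \otimes_{\widehat{P}} P_\bullet$ and $G^*A \otimes_{\widehat{Q}} G^*P_\bullet$ compute the respective derived tensor products. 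For each free summand $\ku\widehat{P}(p,-)$ of some $P_i$, co-Yoneda yields $A \otimes_{\widehat{P}} \ku\widehat{P}(p,-) = A_p$ and $G^*A \otimes_{\widehat{Q}} \ku\widehat{Q}(l(p),-) = A_{G(l(p))} = A_p$; these natural identifications assemble into a chain-level isomorphism, which is the desired quasi-isomorphism. Proposition \ref{cone} then gives the ``in particular'' statement, provided $G$ bijects top elements so that $G^* S_{\hat 1_{\widehat{P}}} = S_{\hat 1_{\widehat{Q}}}$.

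The subtle point will be matching this cleanly-computed comparison with the specific natural map defined earlier in the paper via free resolutions on the $A$-side. I would handle this with a standard bicomplex argument: form $F_\bullet \otimes_{\widehat{P}} P_\bullet$ (and its $G^*$ pullback over $\widehat{Q}$) with $F_\bullet \to A$ a free contravariant $\widehat{P}$-resolution, then use that both row- and column-wise reductions compute the same derived tensor product up to homotopy, so the paper's natural map is quasi-isomorphic to the chain-level identification above. The real combinatorial content of the proposition is the construction of $l$; the downstream homological algebra is formal.
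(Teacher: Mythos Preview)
Your construction of the left adjoint $l$ and the identification $G^*\ku\widehat P(p,-)\cong \ku\widehat Q(l(p),-)$ for \emph{covariant} free modules are correct and match the paper exactly; this adjunction is the real combinatorial input.

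The gap is your plan to resolve on the $C$-side. You propose ``choosing a projective $\widehat P$-resolution $P_\bullet\to C$,'' but $C$ lives in $\Ch(\Sh(X))^{\widehat P}$, and as the paper stresses when setting up its conventions for derived tensor products, $\Sh(X)^{\widehat P}$ does not have enough projectives. The functor $\ku A\otimes^{\rL}_{\widehat P}-$ is \emph{defined} by resolving $A$ in $\Mod k^{\widehat P^{op}}$ precisely to sidestep this, so the assertion that ``$A\otimes_{\widehat P}P_\bullet$ and $G^*A\otimes_{\widehat Q}G^*P_\bullet$ compute the respective derived tensor products'' is not available as written. Your bicomplex fix at the end also presupposes the existence of $P_\bullet$.

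The paper keeps the resolution on the $A$-side throughout: take a free $\widehat P^{op}$-resolution $K_\bullet\to A$, pull it back along $G$ to a resolution $G^*K_\bullet\to G^*A$ over $\widehat Q$ (exact since $G^*$ is exact), and compare $G^*K\otimes_{\widehat Q}G^*C$ with $K\otimes_{\widehat P}C$ term by term. The adjunction you built still does the work, but on the other variable: one must see that $G^*k\widehat P(-,p)\otimes_{\widehat Q}G^*C\cong C_p$. This tensor is the colimit of $q\mapsto C_{Gq}$ over the down-set $\{q:Gq\le p\}$, and $G$ restricted there is cofinal onto $\widehat P_{\le p}$ because each comma poset $\{q:p'\le Gq\le p\}=\{q:l(p')\le q,\;Gq\le p\}$ has minimum $l(p')$; since $\widehat P_{\le p}$ has terminal object $p$, the colimit is $C_p$. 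The two bicomplexes then coincide, giving the isomorphism without ever needing projectives in $\Sh(X)$.
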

			\begin{proof}
				It suffices to prove the statement for a particular choice of free resolutions for $A$ and $f^*A$. First we note that the meet preserving map $G:  \widehat Q \to {\widehat P}$ has an adjoint $F:  {\widehat P} \to  \widehat Q$  given by $$Fp =  \wedge_{q', ~ Gq' \geq p }  ~q'.$$  If $Fp \leq q$ then applying $G$ we have $ p \leq \wedge_{q', ~ Gq' \geq p } G q' \leq Gq.$  Conversely, if $p \leq Gq$ then $Fp = \wedge_{q', ~ Gq' \geq p } q' \leq q$.  Further because $G$ is surjective, there is some $q'$ with $Gq' = p$, and hence $GFp = p$.  
				
				Thus  $G^* \underline {k} {\widehat P}(p, -) = \underline {k} {\widehat P}(p, G-) = \underline {k}  \widehat Q(Fp, -)$.  Take a resolution of  $A$ by free modules: $K_i = \bigoplus_{p} V^p_i \otimes  k {\widehat P}(-,p)$  where $V^p_i$ is a finitely generated free $k$ module, and the differentials are given by maps $d_i^{p \leq q}:V_i^p \to V_i^q$.
				This pulls back to a resolution of $G^* A $ by free modules $G^* K_i = \bigoplus_{p} V^p_i \otimes k \widehat Q(-, Fp)$.  Tensoring with  $C$ we get on the one hand a complex representing  $A \otimes_{ \widehat Q}^L C$, which is the totalization of the bicomplex:
				$$ (-i, j) \mapsto  \bigoplus_{p} V^p_i \otimes C^j _p,$$ 
				and differential in the $i$ variable given by $d_i^{p \leq q} \otimes C^j_{pq}$, and the $j$ variable given by $d_C$. On the other hand we get a complex representing $G^* A\otimes^\rL G^* M$ which is the totalization of the bicomplex  $$ (-i, j)  \mapsto  \bigoplus_{p} V^p_i \otimes C^{j}_{GFp} =  \bigoplus_{p} V^p_i \otimes C^{j}_{p},  $$ with the same differentials, and the natural map between them is an isomorphism.  The last statement of the theorem follows because $G^* S_{\hat 1} = S_{\hat 1}$
			\end{proof}
	We note that the above proposition can be seen as an elementary corollary of the Quillen Fibre lemma. 

	\subsection{Spectral sequences}
		
			We first show how to obtain a spectral sequence from a derived tensor product.  Then we prove the boolean case of Theorem \ref{resolution}, and extend it to a full version.  We combine these two statements in Theorem \ref{FunctorialSS}, to obtain  spectral sequences.

				Suppose we have $C$ in $\Ch(\Sh(X))^{\widehat P}$  and we want to compute $H^i(\hocolim_{P} C)$. There are two relevant spectral sequences: we could take the hypercohomology spectral sequence relating $\hocolim_{P} H^i(C)$ to $H^i (\hocolim C)$ , or we could use the canonical filtration on ${\widehat P}$ representations to get spectral sequence relating $H^i(\hocolim_{P} C_p)$ to $H^i(C)$.  In fact,  we obtain a spectral sequence that combines these two, works also for $A \otimes_{\widehat P}^L -$ and is natural both in ${\widehat P}$  and in free resolutions $F \simto A$.  We give the filtration explicitly in the statement of the theorem:

\begin{prop}[Spectral sequence for $A \otimes^\rL_{ P} -$] \label{SS}
				Let ${ P}$ be finite poset, equipped with a function $r: { P} \to \N$ such that $p > p' \implies r(p) < r(p')$.  Let $C$  be a left bounded chain complex of ${ P}$ representations.  Let  $A$ be a ${ P} \op$ representation, with a finite free resolution  $G_j = \bigoplus_p V_j^p \otimes kP (-,p)$, with differentials given by $d_i^{p \leq q}:V_i^p \to V_{i-1}^q$. Then we have a model for $\ku A \otimes_{ P} C$ given by the totalization of the bicomplex $$(s, -t) \mapsto \bigoplus_{p \in { P}} C^{s}_p \otimes V_t^p,$$ with $d_t$ given by $d_t^{p \leq q} \otimes C^s_{pq} :  V_t^p \otimes C^s_p \to V_{t}^q \otimes C^s_{q}$, and $d_s = d_C$.   And the filtration $ F_i (G \otimes_{ P} C) := $
				
				$$ \bigoplus_{{ t\in \N, p \in { P}, ~s \in \Z ,~ 2 r(p) + s < i}} V_t^p ~\otimes ~C^s_p ~ \oplus  \bigoplus_{{ t \in \N, p \in { P} , ~s \in \Z  ,~ 2 r(p) + s = i}}\ker(1~ \otimes ~d^s_{p,C}: V_t^p ~ \otimes ~C^s_p \to V_t^p~ \otimes ~C^{s+1}_p) $$
		  gives a convergent spectral sequence
		   $$E_1^{i, -j} =  \bigoplus_{s,t, ~s - t = i - j} ~  \bigoplus_{p  \in { P}, ~ 2 r(p) = i - s}  \ku \Tor_{t} (A,  H^s(C_p) \otimes S_p)  \implies H^{i-j}( A \otimes^\rL_{ P} C)$$
		   Alternately, for $p \in { P}, s,t \in \Z$, the term $ \ku \Tor_t(A, H^s(C_p) \otimes S_p)$ appears exactly once on the  $E_1$ page,  in the place $E_1^{i, -j} = E_1^{ 2 r(p) + s, - t - 2r(p)}$.

			\end{prop}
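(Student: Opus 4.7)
My plan is to identify the stated bicomplex as a model for $\ku A \otimes^\rL_P C$, verify that $F_i$ is a subcomplex, and compute $E_1$ by recognizing the associated graded as a mapping cone. For the identification, $G_t = \bigoplus_p V_t^p \otimes k P(-, p)$ is a free resolution of $A$; applying $- \otimes_P C^s$ and using the co-Yoneda identity $k P(-, p) \otimes_P C^s = C^s_p$ produces exactly the stated bicomplex, so $\ku G \otimes_P C$ models $\ku A \otimes^\rL_P C$ by the conventions set up in Section 2.1.

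Next I would verify $d(F_i) \subset F_i$ by analyzing how the weight $w(s,t,p) := 2r(p) + s$ behaves under each summand of the differential: the vertical $1 \otimes d^s_{p,C}$ raises $w$ by one, but $d_C^2 = 0$ forces its image into $\ker(1 \otimes d_C)$; the horizontal identity component $d_t^{p \leq p} \otimes 1$ preserves both $w$ and the kernel condition; and for $p < q$ strictly, $d_t^{p \leq q} \otimes C^s_{pq}$ decreases $w$ by at least $2$, since the strict inequality $r(p) > r(q)$ forces $r(p) \geq r(q) + 1$. Boundedness of the filtration on each total degree, hence convergence, follows from $G$ being finite, $C$ being left bounded, and $r$ having finite image on $P$.

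The heart of the argument is computing $E_1 = H(\mathrm{gr}_i)$, where $\mathrm{gr}_i = F_i/F_{i-1}$. By the weight analysis, $\mathrm{gr}_i$ is concentrated in weights $w = i$ (where it equals $\ker(1 \otimes d_C)$) and $w = i - 1$ (where it equals $V_t^p \otimes C^s_p / \ker(1 \otimes d_C) \iso \im(1 \otimes d_C)$), and the only surviving differentials are the vertical $1 \otimes d_C$ going from $w = i-1$ to $w = i$, together with the horizontal identity $d_t^{p \leq p} \otimes 1$ acting within each weight; the strict horizontal pieces vanish. For fixed $(p, s)$ with $2r(p) + s = i$, flatness of $V_t^p$ identifies the kernel and image with $V_t^p \otimes Z^s(C_p)$ and $V_t^p \otimes B^s(C_p)$ respectively, so the $(p,s)$-summand of $\mathrm{gr}_i$ is the mapping cone (in the $t$-direction) of the inclusion $V_\bullet^p \otimes B^s(C_p) \hookrightarrow V_\bullet^p \otimes Z^s(C_p)$. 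This cone is quasi-isomorphic to the quotient $V_\bullet^p \otimes H^s(C_p)$, whose $t$-th homology is $\Tor_t(A, H^s(C_p) \otimes S_p)$ since evaluating $G$ at $S_p$ yields $V_\bullet^p$. A total-degree check using $s - t = i - j$ with $s = i - 2r(p)$ then places this contribution exactly at $E_1^{2r(p)+s,\, -t-2r(p)}$, as claimed.

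The main obstacle I anticipate is the kernel-refinement in the filtration: one must verify carefully that $F_i$ is really a subcomplex (the kernel condition at the boundary is essential precisely because $1 \otimes d_C$ raises $w$ by one), and then correctly recognize the two-weight structure of $\mathrm{gr}_i$ as a mapping cone so that its cohomology collapses to $V_\bullet^p \otimes H^s(C_p)$. This refinement is a d\'{e}calage-type trick that collapses what would otherwise be a two-step argument (first vertical cohomology in $s$, then $\Tor$ in $t$) into a single computation at $E_1$.
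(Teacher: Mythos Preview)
Your proposal is correct and follows essentially the same route as the paper's proof: verify that the weight $w = 2r(p)+s$ makes $F_i$ a subcomplex (strict horizontal pieces drop $w$ by $\geq 2$, the diagonal piece $d_t^{p\leq p}$ preserves it, and $d_C$ raises it by one into a kernel), then identify the associated graded over each $p$ as the cone of $V_\bullet^p\otimes B^s(C_p)\hookrightarrow V_\bullet^p\otimes Z^s(C_p)$, hence quasi-isomorphic to $V_\bullet^p\otimes H^s(C_p)=G\otimes_P(H^s(C_p)\otimes S_p)$. Your weight bookkeeping and the explicit mention of convergence are in fact slightly more careful than the paper's version; the d\'ecalage observation is exactly what is going on.
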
 
			\begin{proof}

				 This asserted filtration is increasing, and preserved by both differentials since the bar differential only decreases $2r(p) + s$,  $d_C$ increases it by at most one, and if $2 r(p) + s = i$  then $d_C$ acts by $0$ .   The associated graded is $F_{i}G/F_{i-1}G=  $
				 $$  \bigoplus_{{ t \in \N, p \in { P}, ~s \in \Z  ,~ 2 r(p) + s = i -1}}  V_t^p \otimes \frac{C^s_p}{\ker(d^s_{p,C})} \oplus   \bigoplus_{{ t \in \N, p \in { P} , ~s \in \Z  ,~ 2 r(p) + s = i}} V_t^p \otimes {\ker(d^s_{p,C})}.$$ 
				  On this page, if $p < q$,  the map  $ d_t^{p \leq q} \otimes C^s_{pq}$ must act by zero, because it decreases $2 r(p) + s$ by $2$.    
				   So the $d_t$ differential preserves the grading by $p \in { P}$, as does $d_s = d_C$ and so we get a direct sum decomposition of  $F_i/F_{i-1}$ over $p \in { P}$ 
				   as:
				  $$\bigoplus_{p \in { P}} \bigoplus_{{ t \in \N, p \in { P}, ~s \in \Z  ,~ 2 r(p) + s = i -1}} V_t^p \otimes  \cone\left (d_{C,p}^{s-1} : \frac{C^{s-1}_p}{\ker(d_{C,p}^{s-1})} \to \ker(d^s_{p,C})\right ) $$  
				  $$ =    \bigoplus_{p \in { P}}  \cone\left ( \frac{C^{i - 2 r(p)-1}_p}{\ker(d_{C,p}^{i - 2 r(p)-1})}   \to \ker(d^{i - 2 r(p)}_{p,C})\right ) \otimes (G \otimes_{ P} S_p) \to^{\simeq} \bigoplus_{p \in { P}} H^{i - 2 r(p)}(C_p) \otimes (G \otimes_{ P} S_p)$$
				where the last line is the natural map to the cokernel, which is a quasi-isomorphism by the spectral sequence for a bicomplex.     Taking cohomology yields the  desired $E_1$ spectral sequence.  

			\end{proof}
			
		     \begin{remark}
		     		We have the following functoriality for the spectral sequence of \ref{SS}:  let  $f: Q \to { P}$ a map between ranked posets ($r(fq) = r(q)$), $D$ a chain complex of $Q$ representations, $B$ a $Q^	{op}$ module with free resolution $K$, and $u: D \to f^* C$, and $v: B \to f^* A$,  and $\tilde v: K \to f^* A$.   Then the map $K \otimes_Q D \to F \otimes_{ P} C$ preserves the filtration, and gives morphism of spectral sequences  which on cohomology  is $H^{i-j}(B \otimes^\rL_Q D) \to H^{i-j}(A \otimes^\rL_{ P} C)$.  
		       
		        	Because the rank function of $Q$ is non-degenerate we have $f^* S_p = \bigoplus_{q \in Q, fq = p} S_q $, and we get a map $ \bigoplus_{ q \in Q, fq =p} \Tor^{Q}_*(B, S_q) \to \Tor^{{ P}}_*(A, S_p)$ induced by $\tilde v$.  And the map $D \to f^*C$ gives a map $\bigoplus_{q \in Q, ~ fq = p}H^s(D_q) \to   H^s(D_p)$.   The action of $K \otimes_Q D \to F \otimes_{ P} C$ on the $E_1$ page is given by $  \bigoplus_{q \in Q, fq = p}  H^s(C_q) \otimes \ku \Tor^Q_t(A, S_q)   \to H^s(C_p) \otimes \ku \Tor^P_t(A, S_p) $ the diagonal of the tensor product of the two maps.  				\end{remark}

		To formalize the topological setup for our spectral sequences, we define a category $\cX$.   The maps $\cX$ are exactly the maps of spaces and sheaves that give maps of spectral sequences.  

			
		\begin{defn}
			Let $\cal X$ denote the category  whose objects consist of the data $(X, \cF,   { P},  \{Z_p\}_{p \in {\widehat P}})$ where:
			\begin{itemize}
				\item $X$ is a topological space,
				\item $\cF$ is a sheaf on $X$,
				\item   ${ P}$ is a finite meet lattice, with function $r_{\widehat P}: {\widehat P} \to \bbN$ such that $p < q \implies r_{\widehat P}(p) > r_{\widehat P}(q)$,
				\item  $\{Z_p\}_{p \in {\widehat P}}$ is a collection of closed subsets indexed by ${\widehat P}$ such that $Z_{p} \cap Z_{q} = Z_{p \wedge q}$  and $Z_{\hat 1} = X$. 
			\end{itemize}  
			
			A morphism from $(X, \cF,   {\widehat P},  \{Z_p\}_{p \in {\widehat P}})$ to $(Y, \cG, Q, \{Z_q\}_{q \in Q-1})$ is given by:
			\begin{itemize}
				 \item A map $f: Y \to X$ 
				 \item A ranked map $f : Q \leftarrow {\widehat P}$ such that $f\inv(Z_p) = Z_{fq}$,\  
				 \item A map of sheaves  $y: f^* \cF \to \cG$.
			\end{itemize}
			
			  If we have two maps $f: X' \to X$, $f: {\widehat P} \to {\widehat P}'$,  $y: f^* \cF  \to  \cF'$ and $g: X'' \to X'$, $g: {\widehat P}' \to {\widehat P}''$, $y': g^*\cF' \to \cF{'}{'}$ then their composite is given by $f \circ g: X'' \to X$ and  the map $ y' \circ g^*y : (f^* \circ g^*) \cF \simeq g^* f^* \cF \to^{g^*y } g^* \cF' \to^{y'} \cF{'}{'}$, where we use the natural isomorphism $f^* g^* \simeq (g \circ f)^*$.
\QED
			
		\end{defn}
		
		We also fix some notation in this context.
		\begin{defn}
			For an object $(X, \cF,   {\widehat P},  \{Z_p\}_{p \in {\widehat P}})$ of $X$, we write $i_p : Z_p \to X$ for inclusion, and $j_p : U_p \to X$ for the inclusion of $U_p := X - Z_p$.  We  define $U = X - \bigcup_{p \in P} Z_p$ and write $j: U \to X$ for the inclusion of $U$ into $X$. 
		\end{defn}

			Let  $B([n])$  be the boolean poset of all subsets  $ I \subset\{1, \dots, n\}$, ordered by $I \leq J \iff I \supset J$. We prove a version of Theorem \ref{resolution} in the case ${\widehat P} = B(n)$ 
			
			 Recall that restrictions and pushforwards of flabby sheaves along open inclusions are flabby.

			\begin{prop}[Boolean case] \label{booleancase}
			       Let $(X, \cF,   {\widehat P},  \{Z_p\}_{p \in {\widehat P}})$ be an object of $\cX$ with ${\widehat P} = \rB(n)$.   Let $\cF \to \cJ^\bdot$ be a flabby resolution.   Let  $C$  be the chain complex in $\Sh(X)^{\rB(n)}$ given by   $C_I = j_*{\cal J}^{\bullet}|_{U_I}$ for $I \neq \emptyset$ and $C_\emptyset = j_*\cJ|_{U}$, with maps given by restriction.  Then $C$ is $\rB(n)$-exact. 
			\end{prop}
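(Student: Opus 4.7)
The plan is to show $S_{\hat 1}\otimes^{\rL}_{B(n)} C \simeq 0$ (equivalent to $B(n)$-exactness of $C$ by Proposition \ref{cone}) by exhibiting $C$ as the quotient in a short exact sequence of $B(n)$-modules whose other two terms have acyclic totalizations over the Koszul resolution of $S_{\hat 1}$. Let $B$ denote the constant $B(n)$-module $\cJ^\bullet$ (all structure maps the identity), and define $A$ by $A_I := \Gamma_{Z_I}\cJ^\bullet$ (the subsheaf of sections supported in $Z_I$) for $I \neq \emptyset$, and $A_\emptyset := \Gamma_Z\cJ^\bullet$ where $Z := \bigcup_i Z_{\{i\}} = X - U$; the structure maps in $A$ are the inclusions $\Gamma_{Z_I}\cJ^\bullet \hookrightarrow \Gamma_{Z_{I'}}\cJ^\bullet$ induced by $Z_I \subseteq Z_{I'}$ whenever $I \supseteq I'$. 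Flabbiness of each $\cJ^s$ yields short exact sequences $0 \to \Gamma_W\cJ^s \to \cJ^s \to (j_W)_*j_W^*\cJ^s \to 0$ for any closed $W \subseteq X$ (the quotient map being surjective precisely by flabbiness), and these assemble into a short exact sequence of $B(n)$-modules $0 \to A \to B \to C \to 0$.

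Tensoring with the Koszul resolution $F_\bullet \to S_{\hat 1}$ preserves exactness (each $F_t$ is free), so totalization yields a short exact sequence $0 \to \mathrm{Tot}(A) \to \mathrm{Tot}(B) \to \mathrm{Tot}(C) \to 0$ and an associated long exact sequence in cohomology sheaves. It therefore suffices to prove $\mathrm{Tot}(A)$ and $\mathrm{Tot}(B)$ are both acyclic. For $\mathrm{Tot}(B)$: since $B$ is constant at $\cJ^\bullet$, its totalization equals $\cJ^\bullet \otimes_k K_n^{\mathrm{aug}}$, where $K_n^{\mathrm{aug}}$ is the augmented Koszul complex for $B(n)$, equivalently the augmented simplicial chain complex of $\Delta^{n-1}$. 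Since $\Delta^{n-1}$ is contractible, $K_n^{\mathrm{aug}}$ is acyclic, and tensoring with this complex of flat $k$-modules preserves acyclicity.

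For $\mathrm{Tot}(A)$: I show each row (fixed cohomological degree $s$) is exact as a complex of sheaves, which implies acyclicity of $\mathrm{Tot}(A)$ via the row-filtration spectral sequence. The complex of constant sheaves on $X$
$$0 \to \underline{k}_Z \to \bigoplus_{|I|=1}\underline{k}_{Z_I} \to \bigoplus_{|I|=2}\underline{k}_{Z_I} \to \cdots \to \underline{k}_{Z_{[n]}} \to 0$$
is exact: its stalk at a point $x$ with $S := \{i : x\in Z_{\{i\}}\}$ is either zero (when $S = \emptyset$) or the augmented simplicial chain complex of the nonempty face $\Delta^{|S|-1}$ of $\Delta^{n-1}$, which is contractible. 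Flabbiness of $\cJ^s$ gives $R^i\Gamma_W\cJ^s = 0$ for all $i > 0$ and closed $W \subseteq X$, so the functor $W \mapsto \Gamma_W\cJ^s = \mathrm{Hom}_{\Sh(X)}(\underline{k}_W, \cJ^s)$ sends the above exact resolution to an exact complex of sheaves:
$$\Gamma_Z\cJ^s \leftarrow \bigoplus_{|I|=1}\Gamma_{Z_I}\cJ^s \leftarrow \cdots \leftarrow \Gamma_{Z_{[n]}}\cJ^s \leftarrow 0.$$
This is precisely the $s$-th row of $\mathrm{Tot}(A)$. Combining the two acyclicities via the long exact sequence gives $\mathrm{Tot}(C) \simeq 0$, proving $C$ is $B(n)$-exact. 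The main point --- and the step where care is required --- is that a naive row-wise analysis of $\mathrm{Tot}(C)$ itself fails (the $s$-th row of $\mathrm{Tot}(C)$ is not generally acyclic); replacing $C$ by the complex $A$ of local cohomologies converts row-wise acyclicity into the classical Mayer--Vietoris statement for closed covers, which does hold once flabbiness is used to kill higher local cohomology.
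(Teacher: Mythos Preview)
Your proof is correct, but it follows a genuinely different route from the paper's. The paper proves exactness of $S_{\hat 1}\otimes^{\rL}_{B(n)} C$ by a \emph{direct} row-wise analysis of $\mathrm{Tot}(C)$: fixing a flabby sheaf $\cI$, it shows the augmented \v{C}ech-type complex
\[
j_{[n]*}\cI|_{U_{[n]}} \to \cdots \to \bigoplus_i j_{i*}\cI|_{U_i} \to j_*\cI|_U
\]
is exact by induction on $n$, restricting to each $U_i$ and splitting the complex there into a contractible piece and a copy of the $(n-1)$-variable complex on $U_i$. Your approach instead passes through the short exact sequence $0\to A\to B\to C\to 0$ and reduces to the closed-cover Mayer--Vietoris complex $\Gamma_{Z_{[n]}}\cI\to\cdots\to\Gamma_Z\cI$, whose exactness you can check essentially stalkwise via the simplex $\Delta^{|S|-1}$ together with $\Gamma_W$-acyclicity of flabby sheaves. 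Both arguments are clean; yours trades the paper's induction on $n$ for a homological-algebra reduction, and has the pleasant feature that the combinatorics becomes a visible stalk computation.

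One correction to your closing commentary: your claim that ``the $s$-th row of $\mathrm{Tot}(C)$ is not generally acyclic'' is false --- that is exactly what the paper proves, and it holds for any flabby $\cI$ (it is the sheafified \v{C}ech acyclicity for the open cover $\{U_i\}$ of $U_{[n]}$, with surjectivity at the right end coming from flabbiness). So the row-wise route for $C$ does work; it just requires the inductive argument rather than a one-line stalk check. Also, your inference ``$R^i\Gamma_W\cJ^s=0$, so $\underline{\mathrm{Hom}}(-,\cJ^s)$ sends the exact complex of $\underline k_{Z_I}$'s to an exact complex'' is correct but relies on the standard dimension-shifting lemma (a bounded exact complex of objects acyclic for a half-exact functor stays exact after applying that functor); it would be worth naming that step explicitly, since flabby is weaker than injective and one cannot simply invoke exactness of $\underline{\mathrm{Hom}}(-,\cJ^s)$.
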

			
			\begin{proof}
				  To show that $C$ is $\rB(n)$-exact, we must show that the totalization of the bicomplex $$(s,-t) \mapsto  \bigoplus_{I \subset [n], ~ |I| = t}  {j_I}_* \cJ^s|_{U_I}, $$ with differential in the $s$ variable given by $d_{\cal J}$ and differential in the $-t$ variable given by the alternating sum of the restriction maps, is exact.  By the spectral sequence for a bicomplex, it suffices to show that for every flabby sheaf $\cI$ on $X$, the cochain  complex $$C^{-t} = \bigoplus_{I \subset [n], ~ |I| = t}  {j_I}_* \cI|_{U_I} ,$$  with differential the alternating sum of the restriction maps, is exact.  We do this by induction on $n$.  In the case $n=1$, we have $U_{\emptyset} = U_{\{1\}},$ and so the complex has two terms with its only differential the identity, and hence is exact.  When $n > 1$ we have that $C^\bdot$ is the pushforward of a complex of acyclics from $U_{[n]} = \cup_{i \in [n]} U_i$, so we are reduced to showing the complex is exact there, and hence to showing that $C^\bdot |_{U_i}$ is exact for all $i$.    Using that $j^*j_* \simeq id$, and that $U_I \cap U_i = U_i$ if $i \in I$ we have that    
				 $$ C^{-t}|_{U_i} = 
				 \bigoplus_{I \subset [n], ~ |I| = t}  {j_I}_* \cI|_{U_I \cap U_i}  = \bigoplus_{J \subset [n], ~ i \in J,~ |J| = t} \cI |_{U_i} ~\oplus ~  \bigoplus_{K \subset [n] - i, |K| = {t-1}}  {j_i}_* \cI|_{\bigcup_{k \in K} (U_k \cap U_i)}.  $$ 
				 The differential preserves the summand corresponding to subsets $K$ not containing $i$, so a we get a two step filtration of $C^\bdot|_{U_i}$, with associated graded pieces corresponding to the two summands above.  The first complex is an iterated cone of the the identity map of $\cI|_{U_i}$, and so is exact.\footnote{Alternately it comes from a constant representation of the boolean poset of subsets containing $i$, and so is exact.} And by induction applied to $[n] - \{i\}$ on $X = U_i$, the second summand is the pushforward of an exact complex of acyclics, and hence is exact.
			\end{proof}

		The following theorem is the source of our spectral sequences.  It gives a more formal refinement of Theorem \ref{resolution}
		\begin{thm}[Resolution]\label{coolres}
			Let $(X, \cF,   {\widehat P},  \{Z_p\}_{p \in {\widehat P}})$ be an object of $\cX$.   Let $\cF \simto \cJ$ be a flabby resolution. Define  $j_{*\bdot} \cJ|_{U_\bdot}$ to be object of $\Ch(\Sh(X))^{\widehat P}$ given by $p \mapsto j_{*p} \cJ|_p$, and $\cJ_\bdot$ to be the constant representation $p \mapsto \cJ$.   Define $i_{*\bdot} i_\bdot^! \cJ$ to be $\cone(\cJ_\bdot \to j_{* \bdot} \cJ|_{U_\bdot})$.  Then for any free resolution $G \simto k\{*\}|_{{\widehat P} -\hat 1}$ we construct a quasi-isomorphism
			$$ j_* \cJ|_{X - \bigcup_{p \in P} Z_p}  \simto   \hocolim_{P}  j_{*\bdot} \cJ|_{U_\bdot} $$
			 and a zig-zag of quasi-isomorphisms
			$$  j_* \cJ|_{X - \bigcup_{p \in P} Z_p}  \simto \leftsimto S_{\hat 1} \otimes_{\widehat P}^{\rL} i_{*\bdot} i_\bdot^! \cJ. $$
		  Thus for any object $C_\bdot$ of $\Ch(\Mod k^{\widehat P})$ that is isomorphic to $\pi_* i_\bdot^! \cJ$ in ${ \normalfont \textbf{D}}(\Mod k^{\widehat P})$  we have  $H^i(S_{\hat 1} \otimes_{\widehat P}^\rL C_\bdot) \iso H^i(X - \bigcup_{p \in P} Z_p, \cF)$.
			
		\end{thm}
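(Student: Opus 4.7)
The plan is to reduce to the Boolean case proved in Proposition \ref{booleancase} via the transfer principle of Proposition \ref{transfer}, and then to identify $S_{\hat 1} \otimes^\rL_{\widehat P} (i_{*\bdot} i_\bdot^! \cJ)$ with $j_* \cJ|_U$ by applying the cone formula of Proposition \ref{cone} to the distinguished triangle that defines $i_{*\bdot} i_\bdot^! \cJ$.

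First I would construct a surjective meet-preserving map $\phi \colon \rB(P) \to {\widehat P}$, where $\rB(P)$ is the Boolean lattice on the underlying set of $P = {\widehat P} - \hat 1$, ordered by reverse inclusion so that its top element is $\emptyset$. Define $\phi(\emptyset) = \hat 1$ and $\phi(I) = \bigwedge_{p \in I} p$ for nonempty $I$. The identity $\bigwedge_{p \in I \cup J} p = \phi(I) \wedge \phi(J)$ makes $\phi$ meet-preserving, and $\phi(\{p\}) = p$ makes it surjective. The induced closed sets $Z'_I := Z_{\phi(I)} = \bigcap_{p \in I} Z_p$ put $(X, \cF, \rB(P), \{Z'_I\})$ in $\cX$, and the pullback $\phi^*(j_{*\bdot} \cJ|_{U_\bdot})$ is then exactly the complex $C$ studied in Proposition \ref{booleancase}.

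By Proposition \ref{booleancase} the pullback $\phi^*(j_{*\bdot} \cJ|_{U_\bdot})$ is $\rB(P)$-exact, and the transfer Proposition \ref{transfer} transports this to ${\widehat P}$-exactness of $j_{*\bdot} \cJ|_{U_\bdot}$. Unwinding the definition of ${\widehat P}$-exactness using the given free resolution $G \simto k\{*\}_P$ yields the first explicit quasi-isomorphism $j_* \cJ|_{U} \simto \hocolim_P j_{*\bdot} \cJ|_{U_\bdot}$. For the zig-zag between $j_* \cJ|_U$ and $S_{\hat 1} \otimes^\rL_{\widehat P} i_{*\bdot} i_\bdot^! \cJ$, I would apply $S_{\hat 1} \otimes^\rL_{\widehat P}$ to the defining distinguished triangle $\cJ_\bdot \to j_{*\bdot} \cJ|_{U_\bdot} \to i_{*\bdot} i_\bdot^! \cJ$, use the vanishing $S_{\hat 1} \otimes^\rL_{\widehat P} j_{*\bdot} \cJ|_{U_\bdot} \simeq 0$ coming from ${\widehat P}$-exactness (Proposition \ref{cone}), and match the surviving cone with $j_* \cJ|_U$ using explicit chain models for $\cJ_\bdot$, $j_{*\bdot} \cJ|_{U_\bdot}$, and the free resolution $G$. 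The final assertion, that $H^i(S_{\hat 1} \otimes^\rL_{\widehat P} C_\bdot) \iso H^i(X - \bigcup_p Z_p, \cF)$ for any $C_\bdot$ isomorphic to $\pi_* i_\bdot^! \cJ$ in $\bD(\Mod k^{\widehat P})$, then follows by applying $\rR \pi_*$ and using Proposition \ref{hocolimcommutes} to commute derived pushforward past the derived tensor product over ${\widehat P}$.

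The main obstacle is not the high-level strategy but realising the zig-zag at the chain level, since the functoriality assertions attached to the theorem, needed for constructing the spectral sequences of the next section, require fixing explicit flabby models, explicit free resolutions of $k\{*\}_P$, and explicit cone constructions throughout, and verifying that the comparison maps are honest chain maps rather than merely derived-category morphisms. Careful attention must also be paid to the convention at the top element $\hat 1$: the value of $j_{*\bdot} \cJ|_{U_\bdot}$ at $\hat 1$ is the global complement $j_* \cJ|_U$ rather than $0$, and this choice is what makes the Boolean case of Proposition \ref{booleancase} produce the correct target for the first quasi-isomorphism.
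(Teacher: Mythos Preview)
Your overall architecture---reduce to the Boolean case via a surjective meet-preserving map, invoke transfer (Proposition~\ref{transfer}), and then analyze the defining triangle of $i_{*\bdot} i_\bdot^! \cJ$ with the cone formula (Proposition~\ref{cone})---is exactly the paper's.  The gap is in the triangle step, and it is not cosmetic.

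You assert that $S_{\hat 1}\otimes^\rL_{\widehat P} j_{*\bdot}\cJ|_{U_\bdot}\simeq 0$ by ${\widehat P}$-exactness.  That is the wrong term.  As defined in the statement, $j_{*\bdot}\cJ|_{U_\bdot}$ has value $0$ at $\hat 1$ (since $Z_{\hat 1}=X$, hence $U_{\hat 1}=\emptyset$), so by Proposition~\ref{cone}
\[
S_{\hat 1}\otimes^\rL_{\widehat P} j_{*\bdot}\cJ|_{U_\bdot}\ \simeq\ \cone\bigl(\hocolim_P j_{*\bdot}\cJ|_{U_\bdot}\to 0\bigr)\ \simeq\ j_*\cJ|_U[1],
\]
using the first quasi-isomorphism you already established.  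The term that genuinely vanishes is $S_{\hat 1}\otimes^\rL_{\widehat P}\cJ_\bdot$, because $\cJ_\bdot$ is the \emph{constant} ${\widehat P}$-representation and $S_{\hat 1}\otimes^\rL$ of any constant representation is acyclic.  Feeding these two identifications into the triangle $\cJ_\bdot\to j_{*\bdot}\cJ|_{U_\bdot}\to i_{*\bdot} i_\bdot^!\cJ$ gives $S_{\hat 1}\otimes^\rL_{\widehat P} i_{*\bdot} i_\bdot^!\cJ\simeq j_*\cJ|_U$, which is what you want.  Your remark that the value at $\hat 1$ is $j_*\cJ|_U$ is therefore mistaken; that convention applies to the auxiliary ${\widehat P}$-representation whose exactness you prove (matching $C_\emptyset = j_*\cJ|_U$ in Proposition~\ref{booleancase}), not to $j_{*\bdot}\cJ|_{U_\bdot}$ itself.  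Indeed, under your stated convention both $\cJ_\bdot$ and $j_{*\bdot}\cJ|_{U_\bdot}$ would have vanishing $S_{\hat 1}\otimes^\rL$, forcing $S_{\hat 1}\otimes^\rL i_{*\bdot} i_\bdot^!\cJ\simeq 0$, which is false.
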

		\begin{proof}
				For the first  quasi-isomorphism, we show that the representation defined by $p \mapsto j_{*p} \cJ|_{U_p}, p \in P$ and $1 \mapsto \cJ$ is exact. Since $\rB(n)$ is the free meet lattice, have a surjection from  $\rB(n) \to {\widehat P}$ where $n$ is the number of atoms of ${\widehat P}$.   Proposition \ref{transfer} reduces  its exactness  to that of its pullback to $\rB(n)$, and this is the case we showed in Proposition \ref{booleancase}.   Fix a free resolution  $G \simto k \{ *\}_{P}$.   Exactness implies that the map $\phi: G \otimes_{\widehat P}  j_{*\bdot} \cJ|_{U_\bdot} \to \colim_{P} j_{*\bdot} \cJ|_{U_\bdot} \to j_*\cJ|_{U} $ is a quasi-isomorphism.  
				
				  Let $F = \cone(G \to kP(-, 1))$.  The map $F \simto S_{\hat 1}$ gives a free resolution of $S_{\hat 1}$.  We have that
				  	 $$F \otimes_{\widehat P} j_{*\bdot} \cJ|_{U_\bdot} = \cone( G \otimes_{\widehat P} j_{*\bdot} \cJ|_{U_\bdot} \to kP(-,1) \otimes_{\widehat P}  j_{*\bdot} \cJ|_{U_\bdot}) \simto \cone( j_* \cJ|_U \to 0) = j_*\cJ|_U[1].$$
				  Therefore 
				  	$$F \otimes_{\widehat P} i_{*\bdot} i_\bdot^! \cJ  = \cone( F \otimes_{\widehat P}  \cJ_\bdot  \to  F \otimes_{\widehat P} j_{*\bdot} \cJ|_{U_\bdot})[-1] \leftsimto \cone( 0 \to  j_*j^*\cJ[1])[-1] =  j_* \cJ|_U. $$
				The quasi-isomorphism $\leftsimto$  follows because the derived tensor product of $S_{\hat 1}$ with any constant $\widehat P$ representation is zero.  
		\end{proof}

		\begin{ex}[Goresky--MacPherson formula]\label{GMformula}
			Suppose $Y = \R^{n}$, the closed subsets $Z_p$ are linear subspaces,  of codimension $r_p$, and $\cF$ is the constant sheaf $\Z$.  Then by Theorem \ref{coolres},  $\rR \pi_* \Z|_{U} \simeq S_{\hat 1} \otimes_{\widehat P}^\rL C_\bdot $, where the chain complex of ${\widehat P}$ representations $C_\bdot$, has cohomology $H^\bdot(C_p) \simeq H^\bdot(\R^n, \R^n - Z_p, \Z) = \Z[ - r_p]$.    
			
			We claim that $C_\bdot$ is isomorphic to a direct sum of its homology groups in the derived category. 
			Filter $C$ by rank, so that $(F_i C)_p = C_p$ if $r_p \leq i$ and  $(F_i C)_p = 0$ otherwise.  Then $F_{i} C / F_{i-1} C \iso \bigoplus_{p, ~ r_p = i} \Z[-r_p]$.  For $i > j$  we have that $\Ext^1_{\widehat P}(F_{i} C / F_{i-1} C, F_{i} C / F_{i-1} C) = 0$.   Therefore the filtration splits and the claim follows. Thus for $i > 0$ we have
			
			$$ H^i(U, \Z)~ =~ H^i(S_{\hat 1} \otimes_{\widehat P}^\rL ~\bigoplus_{p \in {\widehat P}}~  \Z[-r_p]) ~=~ \bigoplus_{p  \in {\widehat P}} \Tor_{-i + r_p}^{\widehat P}(S_{\hat 1}, S_p) $$ 
			Here $\Tor_d^{\widehat P}(S_{\hat 1}, S_p) = H_d(S_{\hat 1}\otimes^\rL S_p)$, where $S_p$ in $\Ab^{\widehat P}$  is defined by $q \mapsto 0$, $q \neq p$,  and $p \mapsto \Z$.   By  Proposition \ref{pohomcompare},  $\Tor_d^{\widehat P}(S_{\hat 1}, S_p) =  \tilde H_{d-2}((p, 1), \Z)$, and we obtain the classical statement \cite{wachs2006poset}.    
		\end{ex}

		Now we construct the spectral sequences that we use to study configuration space.  First, we recall the definition of relative cohomology. 

	 	\begin{defn}[Relative cohomology]
	 		Let $X$ be a topological space, $U$ an open subset and $\cF$ a sheaf on $X$.  For any flabby resolution $\cF \simto \cJ$ we define the \emph{relative cohomology} $H^i(X, U, \cF)$ to be  $H^i(\pi_* \cone(\cJ \to j_* \cJ|_U)[-1])$. 
	 	\end{defn}
	
		\begin{thm}\label{FunctorialSS}
			Let $(X, \cF,   {\widehat P},  \{Z_p\}_{p \in {\widehat P}})$ be an object of $\cX$.
			Then we construct spectral sequences 
		
		  $$E_1^{i, -j} =  \bigoplus_{s,t, ~s - t = i - j} ~  \bigoplus_{p  \in {\widehat P}, ~ 2 r(p) = i - s}   H_{t}(\hocolim_{P} H^s(U_p, \cF ))  \implies H^{i-j}(X - \cup_{p \in P}Z_p, \cF)$$
				 and 
		  	$$E_1^{i, -j} =  \bigoplus_{s,t, ~s - t = i - j} ~  \bigoplus_{p  \in P, ~ 2 r(p) = i - s} \Tor_{t}(S_{\hat 1}, H^{s}(X,  U_p, \cF))  \implies H^{i-j}(  X - \cup_{p \in P} Z_p, \cF), $$
				where by convention, in the second spectral sequence we have $H^s(X,U_1, \cF) := H^s(X, \emptyset, \cF) = H^s(X, \cF)$.   These spectral sequences are natural in the sense that they define a functor from $\cX $ to spectral sequences of $k$ modules.  
		
		\end{thm}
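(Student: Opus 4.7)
The plan is to deduce the theorem by combining the resolutions of $\rR j_* j^* \cF$ from Theorem \ref{coolres} with the derived-tensor-product spectral sequence of Proposition \ref{SS}, after transporting everything under $\rR \pi_*$ via Proposition \ref{hocolimcommutes}.

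First I would fix a flabby resolution $\cF \simto \cJ$ and a finite free resolution $G \simto \ku\{*\}_P$ over ${\widehat P} \op$. Theorem \ref{coolres} then provides the quasi-isomorphisms
$$ j_*\cJ|_U ~\simto~ \hocolim_{P} j_{*\bdot} \cJ|_{U_\bdot}, \qquad j_*\cJ|_U ~\simto \leftsimto~ S_{\hat 1} \otimes^\rL_{\widehat P} i_{*\bdot} i_\bdot^! \cJ.$$
Apply $\pi_*$ to both sides. Since flabby sheaves (and finite direct sums of flabby sheaves) are $\pi_*$-acyclic, Proposition \ref{hocolimcommutes} guarantees that $\pi_*$ of either side still computes $H^\bdot(U, \cF)$ and that the resulting complex of $k$-modules inherits the same derived-tensor-product description. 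By the definitions of cohomology and of relative cohomology, the ${\widehat P}$-representation $\pi_* j_{*\bdot}\cJ|_{U_\bdot}$ has $s$-th cohomology $H^s(U_p, \cF)$ at $p$ (vanishing at $\hat 1$ since $U_{\hat 1} = \emptyset$), while $\pi_* i_{*\bdot} i_\bdot^! \cJ$ has cohomology $H^s(X, U_p, \cF)$, which at $p = \hat 1$ reduces to $H^s(X, \cF)$.

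Now I would apply Proposition \ref{SS} to each derived tensor product. Taking $A = \ku\{*\}_P$ and $C = \pi_* j_{*\bdot}\cJ|_{U_\bdot}$ with the given rank function produces the first spectral sequence: its $E_1^{i,-j}$ page is the prescribed direct sum over $(p,s,t)$ with $2r(p)+s = i$ and $s-t = i-j$ of terms $\Tor_t(\ku\{*\}_P, H^s(U_p, \cF) \otimes S_p)$, which are the derived functors of $\hocolim_P$ in the sense recorded in Proposition \ref{cone}; the $p = \hat 1$ summand vanishes because $\ku\{*\}_P$ does. Taking instead $A = S_{\hat 1}$ and $C = \pi_* i_{*\bdot} i_\bdot^! \cJ$ yields the second spectral sequence, with $E_1$ terms $\Tor_t(S_{\hat 1}, H^s(X, U_p, \cF) \otimes S_p)$; the final clause of Theorem \ref{vagueFISS} confirms that at $p = \hat 1$ this reduces to $H^s(X, \cF)$ concentrated in $t = 0$, matching the stated convention.

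Finally, functoriality in $\cX$ is inherited from three already-established sources: the strict functoriality of $j_*\cJ|_U$ under the maps of spaces and sheaves prescribed by a morphism of $\cX$; the compatibility built into Theorem \ref{coolres}, whose constructions are cones of canonical restriction maps between the flabby resolutions and hence strictly functorial in the underlying datum; and the functoriality remark after Proposition \ref{SS} describing how rank-preserving maps of posets and comparison maps of free resolutions induce maps of spectral sequences. The main subtlety is bookkeeping: a morphism in $\cX$ requires choosing compatible flabby resolutions on source and target and compatible free resolutions of $\ku\{*\}_P$ and $S_{\hat 1}$ over the two posets, but the up-to-homotopy uniqueness clauses recorded in the derived-tensor-product discussion of Section \ref{backgroundposet} ensure that different valid choices produce the same map of spectral sequences, so the construction descends to a well-defined functor $\cX \to (\textrm{spectral sequences of } k\textrm{-modules})$.
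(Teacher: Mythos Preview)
Your proposal is correct and follows the same architecture as the paper: invoke Theorem \ref{coolres}, push forward via Proposition \ref{hocolimcommutes}, and feed the resulting complexes into Proposition \ref{SS}. The one point on which the paper is sharper is functoriality. You fix an arbitrary flabby resolution and an arbitrary finite free resolution and then argue that up-to-homotopy uniqueness makes the induced maps of spectral sequences independent of choices; the paper instead uses Godement's \emph{canonical} flabby resolution $C(\cF,X)$ and the reduced bar resolution $\bar B$ (Example \ref{barresolution}), both of which are strictly functorial on the nose. This sidesteps the bookkeeping you flag: with canonical resolutions there is no need to choose compatible resolutions for each morphism or to check that compositions are respected, so one obtains an honest functor $\cX \to (\text{spectral sequences})$ rather than one defined only up to homotopy. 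Your homotopy argument does pin down the maps from the $E_1$ page onward, but strict functoriality of the filtered complex itself is cleaner to obtain the paper's way.
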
	
		\begin{proof}
			To obtain strict functoriality we use Godement's canonical flabby resolution of $\cF$, denoted $C(\cF, X)$ \cite{godement1958topologie}. To compute derived tensor products we use the reduced bar resolution $\bar B(S_{\hat 1})$ and the shift of its truncation that resolves $k\{*\}|_{P}$, which we denote by $\bar B(k\{*\}|_{P})$.  Form the chain complex of  ${\widehat P}$ representations $p  \mapsto \pi_* C(\cF, X)|_{U_p}$, which models $\rR \pi_*(\cF|_{U_p})$ because restrictions of flabby sheaves are flabby.  Then applying the construction of Proposition \ref{hocolimcommutes} to the constructions of Theorem \ref{coolres}, we obtain functorial quasi-isomorphisms $$\bar B(k\{*\}|_{P}) \otimes_{\widehat P}  \pi_* C(\cF, X)|_{U_p} \simto \pi_* (C(\cF, X)|_{U} )$$  and $$\bar B(S_{\hat 1}) \otimes_{\widehat P} \cone(C(\cF, X) \to \pi_* (C(\cF, X)|_{U_\bdot})  \simto \pi_* (C(\cF, X)|_{U}).$$   These  quasi-isomorphisms give functorial spectral sequences by Proposition \ref{SS}.  
		\end{proof}

	\section{ Homological criteria for representation stability}
	
			Let $k$ be a field, and let $X$ be a Hausdorff, locally contractible toplogical space.  In this section, using the functoriality of the spectral sequence of Theorem \ref{FunctorialSS}, we obtain a spectral sequence of $\FI$ modules that converges to the cohomology of configuration space with coefficients in $k$.  Then we give a necessary and sufficient criterion for the $E_1$ page to be finitely generated in cohomological degrees $\leq k$.  This yields the criterion for the finite generation of cohomology, Theorem \ref{introcriterion}, since \cite{church2014fi}  shows that finite generation passes along spectral sequences.  Along the way, we show that the $E_1$ page is a free $\FI$ module and give generators for it. 
	
		We define how $\FI$ acts on the spectral sequences contstructed in Section 2, then describe the $E_1$ page as an $\FI$ module, and lastly prove the criterion for finite generation  as Theorem \ref{criterion}  Recall the definition of $\FI$ modules and partitions
		\begin{defn}
			The category $\FI$ has objects finite sets, and maps $\FI(a,b) = \{ \text{ injections } a \into b\}$.  For $k$ a commutative ring, an $\FI$ module is a functor from $\FI$ to the category of $k$ modules.  
		\end{defn}

		\begin{defn}[Conventions for partitions]
			Let ${\widehat P}([n])$, or just $\rP(n)$, denote the \emph{lattice of partitions} of the set $[n] = \{1, \dots, n\}$ ordered by refinement. Formally, the set of partitions of $n$ into $m$ blocks is $$\{ \text{ surjections  } e: [n] \onto [m] \}/S_m.$$
			
			 Given an injection $f: [n] \to [m]$ and a partition $p$ of $n$ we write $fp$ for the partition of $[m]$ such that $\im f$ is partitioned by $p$ under the identification with $n$ and  such that no $i,j$ complement of  $\im f$ are in the same block.  If $p  \leq p'$ then $fp \leq fp'$. 
			 
			  The construction $p \mapsto fp$ defines an action of $\FI$ on the partition posets, or more formally a functor $Q: \FI \to Poset$, such that $Q_n = \rP(n)$ and $Qf: \rP(n) \to \rP(m)$ takes $p$ to $fp$ .
			
			 A partition $q$ of $[m]$ is \emph{irreducible} if it is not  equal to $fp$ for any inclusion $f: n \to m$ for $n < m$, or equivalently if all the blocks of $q$ have more than one element. 
			
			 The lattice $\rP(n)$ is ranked: we define the \emph{rank of $p$} be $r(p) = n - \# \rm blocks$. We have that $r (p) = r(fp)$. In other words,  the action of $\FI$ preserves the rank. 
			 
			 For a partition $p$ of $[n]$ and $q$ of $[n]$ we have a partition $p \sqcup q$ of $[m] \sqcup [n]$. \QED
		\end{defn}
		
		Next we describe how $\FI$ acts on powers of $X$ and hence on the spectral sequences from the previous section.
		\begin{defn}[Action of $\FI$]
			 Let $X$ be a topological space, and let $\cF$ be a sheaf of $k$ modules on $X$ with a distinguished global section  $s: \ku \to \cF$. From this data, we describe a functor  $\FI \to \cal X$. 
			 
			   First note that the $n$-fold product of $X$, is the same as the topological space of maps $[n] \to X$.  For a partition $p$ of $n$ (given by $n \to m$),  write $Z_p$ for the collection of all maps that $n \to X$ that factor through $p$ and $U_p$ for $X^n - Z_p$.  Thus our convention is that $Z_{[n]} =  X^{[n]}$, and $U_{[n]} = \emptyset$.  We also write $\cF^{\boxtimes [n]}$ for the $n$th external tensor power of $\cF$, a sheaf on $X^{[n]}$.  
			   
			   The functor $\FI \to \cX$ takes $[n]$ to $(X^{[n]}, \{Z_p\}_{p \in \rP(n) - 1}, \rP(n), \cF^{\sqtimes n})$.   An injection, $g: [n]\into [m]$ gives a projection map $g: X^{[m]} \to X^{[n]}$, and the preimage of $Z_p$ under this map is $Z_{gp}$, where $g: \rP(n)  \to \rP(m)$ is as described above.  Using the natural isomorphism  $g^* \cF^{\sqtimes [n]} \simeq \cF^{\sqtimes \im g } \sqtimes k^{\sqtimes [m] - \im g}$,  we have $g$ act by  $$ {1^{\sqtimes \im g} \sqtimes s^{\sqtimes [m]-\im g} : ~~ } \cF^{ \im g} \sqtimes \ku^{\sqtimes [m] -\im g} \to \cF^{\sqtimes [m]}. $$    \QED
		   
		\end{defn}			   
		
					   Let $\Conf_n(X)$  be the \emph{ordered configuration space} of $n$ points in $X$. Then $X^{[n]} - \cup_{p \in \rP(n)} Z_p = \Conf_n(X)$

		\begin{ex}
			The partitions of $3$ are $$\{1\}\{2\}\{3\},~ \{1\}\{23\},~  \{2\}\{13\},~  \{3\}\{12\}, \text{ and } \{123\}$$ with $\{1\}\{2\}\{3\} \geq \{12\}\{3\} \geq \{123\}$.  If we take the inclusion  $f: [3] \to [5]$ with $1 \mapsto 1, 2 \mapsto 5, 3 \mapsto 3,$  then $f\{12\}\{3\} =\{15\}\{3\}\{2\}\{4\}$  The irreducible partitions of $[4]$ are $\{12\}\{34\}$ and $\{1234\}$
			
			For $X$ a topological space, we have $Z_{\{12\}\{34\}} = \{ (x_1, x_2, x_3, x_4) \in X^4 ~|~ x_1 = x_2 \text{ and } x_3 = x_4\}$
		\end{ex}

		From the functor $\FI \to {\cX}$, and the fact that the partition poset is graded Cohen Macaulay \cite{wachs2006poset}  ($\Tor_i(S_{\hat 1}, S_p)$  is only nonzero in degree $i =\depth p$), we obtain the following spectral sequence:
		\begin{prop}[Spectral sequence of $\FI$ modules] \label{FISS}
		 Let $k$ be a field. Then there is a spectral sequence of {\rm $\FI$} modules  converging to the cohomology of configuration space as an {\rm $\FI$} module  $E_1^{i, -j} =$
				$$  \bigoplus_{s \in \Z,~ p \in {\rP}(\bdot), ~s - r(p) = i - j , ~ 2 r( p) = i - s} H^{s}(X^\bdot,  U_p, \cF) \otimes \Tor^{{\widehat P}(\bdot)}_{t}(S_{\hat 1}, S_p)  \implies H^{i-j}(  X^\bdot - \cup_{p \in {\widehat P}(\bdot)-1} Z_p, \cF). $$
			In other words, for each $p \in \rP(n)$, $s \in \Z$, the term $H^{s}(X^n, U_p, \cF) \otimes \Tor_{r(p)}(S_{\hat 1}, S_p)$ appears exactly once on the $E_1$ page, in the place $E_1^{-i,j} = E_1^{2 r(p) + s, -3 r(p)}$.
		\end{prop}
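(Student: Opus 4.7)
The plan is to apply Theorem \ref{FunctorialSS} to the functor $\FI \to \cX$ just constructed, which sends $[n]$ to the object $(X^{[n]}, \cF^{\sqtimes n}, \rP(n), \{Z_p\}_{p \in \rP(n)})$. For each $n$ this yields a spectral sequence of the stated form with $Y = X^{[n]}$, ${\widehat P} = \rP(n)$, and $U = X^{[n]} - \bigcup_{p \in \rP(n) - \hat 1} Z_p = \Conf_n(X)$. Since Theorem \ref{FunctorialSS} is natural in $\cX$, the collection of spectral sequences one obtains as $n$ varies assembles to a single spectral sequence of $\FI$-modules whose abutment is $n \mapsto H^{i-j}(\Conf_n(X), \cF^{\sqtimes n})$. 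Verifying that an injection $f : [n] \hookrightarrow [m]$ really does define a morphism in $\cX$ reduces to checking that the projection $X^{[m]} \to X^{[n]}$ satisfies $f^{-1}(Z_p) = Z_{fp}$, which is immediate from the definition of the partition $fp$.

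Next I will use the Cohen--Macaulay property of the partition lattice to collapse the general $E_1$ formula. In the paper's convention $r(p) = n - \#\text{blocks}(p)$, the top element $\hat 1$ of $\rP(n)$ is the discrete partition, and a maximal chain in the open interval $(p, \hat 1)$ has $r(p) - 1$ elements, so its order complex has dimension $r(p) - 2$. Cohen--Macaulayness of $\rP(n)$ forces $\tilde H_i((p, \hat 1), k) = 0$ for $i \neq r(p) - 2$, and Proposition \ref{pohomcompare} then gives $\Tor_t^{\rP(n)}(S_{\hat 1}, S_p) = 0$ unless $t = r(p)$. Because $k$ is a field, the $\rP(n)$-representation $S_p \otimes_k H^s(X^{[n]}, U_p, \cF^{\sqtimes n})$ is a direct sum of copies of $S_p$, so $\Tor$ commutes with the tensor factor and the same vanishing propagates to $\Tor_t(S_{\hat 1}, S_p \otimes_k H^s(X^{[n]}, U_p, \cF^{\sqtimes n}))$, which therefore vanishes unless $t = r(p)$.

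Finally, I substitute $t = r(p)$ into the two constraints $s - t = i - j$ and $2 r(p) = i - s$ appearing in the $E_1$ page from Theorem \ref{FunctorialSS}. These jointly force $s = i - 2 r(p)$ and $j = 3 r(p)$, so the unique surviving term at bidegree $(i,-j)$ is exactly $H^s(X^{[n]}, U_p, \cF^{\sqtimes n}) \otimes \Tor_{r(p)}(S_{\hat 1}, S_p)$, placed at $(2 r(p) + s, -3 r(p))$, which is the stated form. No step is a genuine obstacle: the argument is just the composition of the functoriality statement of Theorem \ref{FunctorialSS} with the classical Cohen--Macaulayness of $\rP(n)$. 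The subtlest piece is index bookkeeping --- verifying that the two descriptions of the $E_1$ page agree --- which the substitution above makes explicit.
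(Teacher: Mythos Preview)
Your proposal is correct and follows precisely the approach the paper intends: the proposition is stated in the paper as an immediate consequence of applying Theorem \ref{FunctorialSS} to the functor $\FI \to \cX$ together with the Cohen--Macaulay property of $\rP(n)$, and you have spelled out exactly those two steps, including the index bookkeeping that pins down the bidegree $(2r(p)+s,\,-3r(p))$. Your write-up is in fact more detailed than the paper's own one-line justification.
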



		In order to identify the action on the $E_1$ page, we use the Kunneth formula to describe the following $\FI$ module: 
		\begin{prop}\label{identifyy}
			Let $k$ be a field, and $X$ be Hausdorff and locally contractible.  Let $\cF$ be a sheaf of $k$ vector spaces on $X$ such that $H^0(X, \cF) = {k}$ with distinguished section $x$.     For any $p_0 $ a partition of $n_0$ and $d \in \N$, the {\rm $\FI$} module $H^d(X,U_{p_0}, \cF^{\sqtimes [n]})$  defined by: 
			$$ n \mapsto\bigoplus_{\{g: [n_0] \into[ n]\}/\bS_{n_0}}  H^d \left(X^{[n]},U_{g p_0}, ~ \cF^{\boxtimes n} \right),$$ 
			is freely generated as follows:  for each $r \leq d$ and integer partition of $d -r = l_1  + \dots + l_m$, with  $l_1 \geq \dots \geq l_m \geq 1$, the  $\bS_{m + n_0}$ module
			  $$\bigoplus_{\{g: [m] \into [m] \sqcup [n_0]\} } H^r(X^{[m] \sqcup [n_0] - \im g} , U_{g p_0}, \cF^{\sqtimes [n_0]}) \otimes \bigotimes_{y \in \im g} H^{l_{g \inv y}} (X, \cF)  $$
			$$=  {\rm Ind}_{\bS_m \times \bS_n}^{\bS_{n+m}} \left( H^r(X^{n_0} , X^{n_0} - Z_\lambda, \cF^{\sqtimes n_0}) \otimes \bigoplus_{\sigma \in \bS_m}  H^{l_{\sigma1}}(X, \cF) \otimes \dots \otimes  H^{l_{\sigma m}}(X, \cF)   \right)$$  
			generates an {\rm $\FI$} submodule that is free (i.e. isomorphic to its induction to {\rm $\FI$}), and these submodules give a direct sum decomposition of $H^d(\rR i^!_{p_0} \cF^{\sqtimes [n]})$.
			
		\end{prop}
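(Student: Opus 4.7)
The plan is to establish the decomposition by Kunneth, identify the \emph{primitive} Kunneth summands --- those not in the image of any $\FI$-map from a smaller level --- as the free $\FI$ generators, and finally match the $\bS_{m+n_0}$-representation they span at the generating level with the induction from $\bS_m \times \bS_{n_0}$ displayed in the statement.

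First I would apply the Kunneth formula. For an injection $g\colon [n_0] \into [n]$, the partition $gp_0$ has singleton blocks on $[n]-\im g$, so $Z_{gp_0} = Z_{p_0} \times X^{[n]-\im g}$ and $U_{gp_0} = U_{p_0} \times X^{[n]-\im g}$ (identifying $X^{\im g}$ with $X^{n_0}$ via $g$). Writing $\cF^{\boxtimes n} = \cF^{\boxtimes \im g} \boxtimes \cF^{\boxtimes [n]-\im g}$, the Kunneth formula for a locally contractible space over a field yields a natural $\bS_n$-equivariant decomposition
\[
H^d(X^{[n]}, U_{gp_0}, \cF^{\boxtimes n}) \;\cong\; \bigoplus_{r + \sum_{y} b_y = d} H^r(X^{n_0}, U_{p_0}, \cF^{\boxtimes n_0}) \otimes \bigotimes_{y \in [n]-\im g} H^{b_y}(X, \cF),
\]
summed over tuples $(r,(b_y))$ of nonnegative integers with total $d$.

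Next I would trace the $\FI$-action through this decomposition. An injection $h\colon [n]\into [n']$ acts by pulling back along the projection $X^{[n']}\to X^{[n]}$ and then applying the distinguished section $x\colon \ku \to \cF$ at the new coordinates; on Kunneth summands this sends $(g,r,(b_y))$ to $(hg, r, (b'_{y'}))$ with $b'_{hy}=b_y$ and $b'_{y'}=0$ for $y' \in [n']-\im h$, the class on the new coordinates being $x_*(1)\in H^0(X,\cF)$. Since $H^0(X,\cF) = k\cdot x_*(1)$, a summand lies in the $\FI$-image of a smaller level if and only if some $b_y=0$. I call such a summand \emph{primitive} when every $b_y\geq 1$; then the number of such $y$'s is $m := n - n_0$, and sorting $(b_y)$ in decreasing order yields an integer partition $l_1 \geq \dots \geq l_m \geq 1$ of $d-r$. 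Every non-primitive summand is uniquely the $\FI$-image of a primitive at the smaller level $n_0 + |\{y : b_y\geq 1\}|$ obtained by deleting the zero positions, so the $\FI$-module is generated by its primitives.

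For fixed $(r, (l_1,\ldots,l_m))$, the direct sum of primitive summands at level $m+n_0$ with this data is the $\bS_{m+n_0}$-representation $V_{r,l_1,\ldots,l_m}$ displayed in the statement: the indexing set $\{g\colon [m]\into [m]\sqcup [n_0]\}$ records which positions of $[m]\sqcup [n_0]$ carry the $H^{l_i}(X,\cF)$ factors (with $g\inv y$ naming the $l_i$) versus the $H^r(X^{n_0}, U_{p_0}, \cF^{\boxtimes n_0})$ factor on the complement. The map
\[
\FI([m+n_0], [n]) \otimes_{\bS_{m+n_0}} V_{r,l_1,\ldots,l_m} \;\longrightarrow\; \bigoplus_{g/\bS_{n_0}} H^d(X^{[n]}, U_{gp_0}, \cF^{\boxtimes n}),
\]
sending an injection and a primitive class to its $\FI$-image, is a bijection onto the span of summands whose multiset of positive $b_y$-values equals $\{l_1,\ldots,l_m\}$. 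Summing over all $(r,l_1,\ldots,l_m)$ gives the full decomposition.

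The principal obstacle will be matching the $\bS_{m+n_0}$-structure on $V_{r,l_1,\ldots,l_m}$ with the induction from $\bS_m\times \bS_{n_0}$ in the second display of the statement. The sum $\bigoplus_g$ over injections $[m]\into [m]\sqcup [n_0]$ is exactly the coset sum for $\bS_{m+n_0}/(\bS_m\times \bS_{n_0})$: choosing $g$ amounts to choosing an ordered partition of $[m+n_0]$ into a distinguished size-$m$ subset carrying the $H^{l_i}$ factors (with the labelling by $[m]$ recording which $l_i$ appears where) and its size-$n_0$ complement carrying the relative class. Equal parts of $(l_1,\ldots,l_m)$ require no quotient because we sum over all labellings, and the $\bS_m$ factor permutes equal $l_i$'s correctly under the induction.
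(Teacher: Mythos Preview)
Your proposal is correct and follows essentially the same approach as the paper: both apply K\"unneth to split off the $[n]-\im g$ factors, track the $\FI$-action as padding by the distinguished class in $H^0(X,\cF)$, and identify the free generators as the summands with all exponents $\geq 1$ (your ``primitive'' summands, the paper's sequences $c_t\geq 1$). Your write-up is slightly more explicit about why the induced map from the free $\FI$-module is a bijection onto the appropriate summands, but the argument is the same.
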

		\begin{proof}
			Note that $X^{[n]} - Z_{gp_0} = X^{[n]-{ \rm im }g} \times( X^{[n_0]} - Z_{p_0})$, so the pair $(X^{[n]}, X^{[n]} - Z_{gp_0}) = (X, X)^{[n] - \im g} \times (X^{[n_0]} , Z_{p_0})$.  Thus by the Kunneth formula, Proposition \ref{kunneth}, and since $k$ is a field,  we have that
			 $$ \bigoplus_{\{g :  \into [n]\}/\bS_{n_0}}  H^\bdot \left(X^{[n]}, U_{gp_0}, ~ \cF^{\boxtimes n} \right) = \bigoplus_{\{g : [n_0] \into [n]\}/S_{n_0}} H^\bdot(X, \cF)^{\otimes [n] - \im g} \otimes H^\bdot( X^{n_0}, U_p, \cF^{[n_0]})$$
			The piece in cohomological degree $d$ is 
			$$ \bigoplus_{\{g : [n_0] \into [n]\}/\bS_{n_0}}~ ~ \bigoplus_{c_t \in \N,~ t \in [n] - \im g,~ r \in \N, ~ r + \sum_t c_t = d}  H^r(X^{[n_0]}, U_{p_0}, \cF^{\sqtimes n_0}) \otimes \bigotimes_{t \in n - \im g}  H^{c_t}(X, \cF). $$
			An injection $h: n \into m$ acts by mapping the summand corresponding to $g, r$  and $c_t, t \in n - \im(g)$ to the summand corresponding to $h \circ g,  r$ and the sequence $c_s$ defined by: $c_s = c_t$ if $s \in {\im h} - \im( g \circ h) $ and $c_s = 0$ if $s \in [m] - \im h$.  Up to a sign, the maps uses isomorphism $x: k \to H^0(X, \cF)$ tensored with the action of $\bS_{n_0}$ on $ H^r(X^{[n_0]}, U_{p_0}, \cF^{\sqtimes [n_0]})$.  The $\bS_m$  module corresponding to  $d -r = l_1  + \dots + l_m$, is the $\bS_m$ submodule of the degree $[m] \sqcup [n_0]$ piece which is spanned by the summand corresponding to the canonical $g = [n_0] \into [n_0] \sqcup [m]$ and $c_i = l_i$ for $i \in [n_0] \sqcup [m] - [n_0] = [m]$.  We can generate every summand corresponding to the same integer partition padded by zeros.  And since the action of $\FI$ preserves the nonzero part of the partition, the submodules corresponding to these summands  intersect trivially.  Finally, recall that the free $\FI$ module on an $\bS_m$ module $M$ is $t \mapsto M \otimes_{\bS_m} k\{ m \into t\} = \bigoplus_{\{m \into t\}/\bS_t} M$, so that each submodule spanned is free. 
		\end{proof}

	It is essential for us that $\FI$ modules are Noetherian, which implies finite generation passes along spectral sequences. The following theorem appears in \cite{church2014fi}.
		\begin{thm}[CEFN] \label{noetherianity}
			Let $k$ be a Noetherian ring.  Then a submodule of a finitely generated {\rm $\FI$} module is finitely generated.
		\end{thm}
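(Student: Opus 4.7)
The plan is to deduce Noetherianity from the special case of free $\FI$ modules via a Gr\"obner basis style argument. Every finitely generated $\FI$ module is a quotient of a finite direct sum of principal free modules $M(d) := k\{\mathrm{Inj}([d],-)\}$, and submodules pull back along surjections while finite direct sums of Noetherian $\FI$ modules are Noetherian. Hence it suffices to show that each single $M(d)$ is a Noetherian $\FI$ module over any Noetherian ring $k$.

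First I would equip each $\mathrm{Inj}([d],[n])$ with a total order $\prec_n$ compatible with the $\FI$ action (for instance, the lexicographic order on the tuple $(f(1),\dots,f(d))$, extended coherently as $n$ varies). For an $\FI$ submodule $N \subseteq M(d)$, define $L_n(N) \subseteq \mathrm{Inj}([d],[n])$ to be the set of leading injections, i.e.\ the $\prec_n$-maximal support of some element of $N([n])$. Compatibility of $\prec_n$ with the $\FI$ action ensures that $\{L_n(N)\}_n$ is an $\FI$ stable family of subsets, meaning $g(L_n(N)) \subseteq L_m(N)$ for every injection $g:[n]\hookrightarrow[m]$. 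A standard Gr\"obner argument then shows that if $N \subsetneq N'$, then $L_n(N) \subsetneq L_n(N')$ for some $n$, so ascending chains of $\FI$ submodules of $M(d)$ are dominated by ascending chains of $\FI$ stable subset systems.

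The main obstacle -- and the combinatorial heart of the theorem -- is the assertion that the poset of $\FI$ stable subset systems of $\mathrm{Inj}([d],-)$ is well founded. This is a form of Higman's lemma: each injection $[d]\hookrightarrow[n]$ can be encoded as a word in a well-quasi-ordered alphabet (tracking the gaps between successive values $f(i)$), and Higman's theorem that the subword order on words over a well-quasi-ordered alphabet is itself a well-quasi-order yields the desired well-foundedness. This combinatorial input is independent of $k$ and is the step that genuinely requires work; all remaining pieces are formal.

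Once well-foundedness is in hand, the Noetherianity of $k$ enters only locally: for each bounded range of $n$, the stabilizers of the minimal leading monomials cut out finitely generated $k[\bS_n]$-submodules in the fibers, and Noetherianity of $k[\bS_n]$ lets us extract a finite generating set there. Lifting representatives for each of finitely many minimal leading monomials to elements of $N$ then gives a finite generating set for $N$ as an $\FI$ module. Combining this with the initial reduction to principal free modules completes the proof, and the theorem is applied in the paper via the fact that finite generation of an $\FI$ module passes to subquotients, hence to the limits of spectral sequences of finitely generated $\FI$ modules.
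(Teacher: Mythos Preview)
The paper does not prove this theorem; it is stated with attribution to \cite{church2014fi} (Church--Ellenberg--Farb--Nagpal) and then invoked as a black box, so there is no argument in the paper to compare your proposal against.

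Your sketch follows the Gr\"obner/well-quasi-order strategy, which is one of the standard routes to this result (closer in spirit to the Sam--Snowden approach than to the original CEFN proof, which proceeds via a structural shift theorem for $\FI$ modules rather than Higman's lemma). One point needs tightening: over a Noetherian ring $k$ that is not a field, the implication ``$N \subsetneq N' \Rightarrow L_n(N) \subsetneq L_n(N')$ for some $n$'' is false as stated, since two distinct submodules can share identical leading-injection supports while differing only in the $k$-ideals of leading coefficients (e.g.\ $2f$ versus $f$ over $\Z$). The correct bookkeeping attaches to each potential leading injection the ideal of leading coefficients it can carry; Noetherianity of $k$ then combines with the Higman well-quasi-ordering to terminate ascending chains. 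With that repair the outline is sound, though the details you defer to ``a standard Gr\"obner argument'' and to the final lifting step are where the actual work lies.
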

		\begin{cor}[CEFN]
			Suppose a first quadrant spectral sequence of {\rm $\FI$} modules $E_1^{p,-q}$, converging to an  {\rm $\FI$} module $H^{p+q}$.  If each $E_1^{p,-q}$ is a finitely generated $\FI$ module, then $H^{p-q}$ is finitely generated
		\end{cor}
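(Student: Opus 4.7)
The plan is to apply the Noetherianity statement of Theorem \ref{noetherianity} iteratively. First, I would note that for each $r \geq 1$, the page $E_{r+1}^{p,-q}$ is, by definition, a subquotient of $E_r^{p,-q}$ as an $\FI$ module: it is the homology of a three term complex whose middle term is $E_r^{p,-q}$. Starting from the hypothesis that $E_1^{p,-q}$ is finitely generated, Theorem \ref{noetherianity} shows that the kernel of $d_1$ at $E_1^{p,-q}$ is finitely generated, hence so is the quotient $E_2^{p,-q}$. Iterating this argument, every $E_r^{p,-q}$ is finitely generated.

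Next I would use that the spectral sequence is first-quadrant. For fixed $(p,q)$, both the incoming differential $d_r : E_r^{p-r, -q+r-1} \to E_r^{p, -q}$ and the outgoing differential $d_r : E_r^{p,-q} \to E_r^{p+r, -q-r+1}$ have source or target outside the first quadrant once $r$ exceeds $\max(p, q+1)$. Therefore $E_\infty^{p,-q} = E_r^{p,-q}$ for $r$ sufficiently large, and so $E_\infty^{p,-q}$ is finitely generated by the previous paragraph.

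Finally, convergence says that for each $n$ there is a finite filtration of $H^n$ whose associated graded pieces are the $E_\infty^{p,-q}$ with $p-q = n$ (or $p+q=n$, depending on the indexing convention; the argument is the same). By the first-quadrant hypothesis, this filtration has only finitely many nontrivial pieces. Repeatedly applying the elementary fact that in a short exact sequence $0 \to A \to B \to C \to 0$ of $\FI$ modules, if $A$ and $C$ are finitely generated then so is $B$ (take generators of $A$ together with lifts of generators of $C$), we conclude that $H^n$ is finitely generated.

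There is no real obstacle here; the only delicate point is that Theorem \ref{noetherianity} is needed at every stage to pass from finite generation of $E_r^{p,-q}$ to finite generation of the kernel of $d_r$, which is a submodule rather than a quotient. Without Noetherianity of $\FI$ modules, finite generation would not propagate through the pages of the spectral sequence.
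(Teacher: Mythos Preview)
Your argument is correct and is exactly the standard one: Noetherianity propagates finite generation through the pages, the first-quadrant condition forces each spot to stabilize, and the resulting finite filtration on $H^n$ is built from finitely many finitely generated pieces. The paper does not supply its own proof of this corollary; it simply attributes the statement to \cite{church2014fi}, so there is nothing further to compare against.
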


		The following is not a direct consequence of  the above theorem 
		but is easier to prove:
		\begin{prop}[CEF] \label{fgTensor}
			If $M,N$ are finitely generated {\rm $\FI$} modules, then $M \otimes_{k} N$ is a finitely generated {\rm $\FI$} module.
		\end{prop}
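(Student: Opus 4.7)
The plan is to reduce to the case of representable (free) $\FI$ modules and then decompose the tensor product of two representables as a finite direct sum of representables.

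First I would note that tensoring over $k$ is right exact in each variable and commutes with direct sums, and that every finitely generated $\FI$ module is a quotient of a finite direct sum $\bigoplus_i M(a_i)$ where $M(a)$ denotes the representable $\FI$ module $M(a)\colon S \mapsto k\{[a] \hookrightarrow S\}$. Consequently, it suffices to show that $M(a) \otimes_k M(b)$ is a finitely generated $\FI$ module for every $a,b \in \bbN$.

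For this, I would evaluate $M(a) \otimes_k M(b)$ on a finite set $S$: it is the free $k$ module on the set of pairs of injections $(f\colon [a] \hookrightarrow S,\ g\colon [b] \hookrightarrow S)$. The key combinatorial observation is that such a pair is the same data as an equivalence relation $\sim$ on $[a] \sqcup [b]$ whose restriction to each of $[a]$ and $[b]$ is trivial, together with an injection of the quotient $([a] \sqcup [b])/\sim$ into $S$. Grouping by the equivalence relation gives a direct sum decomposition
\[
M(a) \otimes_k M(b) \ \iso\ \bigoplus_{\sim} M\bigl(([a] \sqcup [b])/\sim\bigr),
\]
where $\sim$ ranges over the (finite) set of equivalence relations on $[a] \sqcup [b]$ that are injective on each summand. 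Each summand is a representable $\FI$ module, which is generated in a single degree, and the index set is finite.

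Since $M(a) \otimes_k M(b)$ is thus a finite direct sum of finitely generated $\FI$ modules, it is finitely generated; combined with the initial reduction, this yields the proposition. There is no genuine obstacle here beyond checking that the natural bijection between pairs of injections and the pair (equivalence relation, injection of quotient) is $\FI$-equivariant, which is straightforward.
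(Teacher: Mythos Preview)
Your proof is correct and matches the paper's argument essentially verbatim: the paper also reduces by right exactness to the tensor product of representables $k\FI(m,-)\otimes_k k\FI(n,-)$ and decomposes it as a finite direct sum of representables indexed by spans $m \hookleftarrow a \hookrightarrow n$ (equivalently, your equivalence relations on $[a]\sqcup[b]$ trivial on each factor), with each summand $k\FI(m\sqcup_a n,-)$.
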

		\begin{proof}
			 By right exactness, it suffices to show that the tensor product  ${k} \FI(n, -) \otimes {k} \FI(m, -)$ is finitely generated. This is the $\FI$ module: $$t \mapsto {k} \{ \text{pairs } (f : m \into t, g: n \into t) \} = \bigoplus_{a \leq {\rm min}(m,n)} \bigoplus_{\{\text{spans } m  \hookleftarrow a \hookrightarrow n \}/S_a} {k} \FI( m \sqcup_a n, t),$$ which is finitely generated
		\end{proof}
		
		Next we characterize when the degree $\leq k$ piece of the spectral sequence is a finitely generated $\FI$ module, by way of an auxiliary function.  
		\begin{defn}
				 For any partition $p$ of $n$, let  ${\rm van}(n) = \max \{t~|~ H^{t}(X^n, U_ p, \cF^{\sqtimes n}) = 0\}$.  In particular, for the indiscrete partition $[n]$ corresponding to the diagonal embedding we have ${\rm van}([n]) = \max \{t~|~ H^t(X^n, X^n - X, \cF^{\sqtimes n}) = 0\}$
		\end{defn}
		
		\begin{prop} \label{evilprop}
			Let $k$ be a field and let $X$ be locally contractible and Hausdorff.  Then the degree $\leq k$ piece of the $E_1$ page of the {\rm $\FI$} module local cohomology spectral sequence in Proposition \ref{FISS},  $\bigoplus_{i - j \leq k} E_1^{i, -j}$ is finitely generated  if and only if ${\rm van}([n]) - r([n]) \geq 0$ and for every $n$ and ${\rm van}([n]) - r([n]) \geq k$ for  $n \gg 0$.    
		\end{prop}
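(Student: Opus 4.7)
The plan is to decompose the $E_1$ page by $\FI$-orbits of partitions, identify each orbit's contribution as a free $\FI$-module via Proposition \ref{identifyy}, and then characterize when only finitely many orbits contribute in cohomological degree $\leq k$. Throughout I adopt the convention that $\text{van}(q)$ denotes the largest $t$ such that $H^i(X^n, U_q, \cF^{\sqtimes n}) = 0$ for all $i \leq t$ (i.e.\ the vanishing range), so that $\text{van}(q)+1$ is the smallest index where the cohomology is nonzero.

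First, every partition $p \in \rP(n) - \hat 1$ lies in the $\FI$-orbit of a unique irreducible partition $p_0 \in \rP(n_0)$ (obtained by removing the singleton blocks of $p$); the top $\hat 1$ forms its own orbit. Grouping the terms of Proposition \ref{FISS} by orbit yields $E_1 = \bigoplus_{p_0} E_1(p_0)$, where for each irreducible $p_0$ the piece $E_1(p_0)$ is a sum over $s \in \Z$ and injections $g: [n_0]\into[n]$ modulo $\bS_{n_0}$ of $H^s(X^n, U_{gp_0}, \cF^{\sqtimes n}) \otimes \Tor_{r(p_0)}(S_{\hat 1}, S_{gp_0})$. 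The cohomological degree on $E_1$ of this term is $s - r(p_0)$ (with $r(\hat 1) = 0$). By Proposition \ref{identifyy}, for each fixed $s$ this defines a free $\FI$-module whose generating $\bS_{?}$-representations are tensor products of pieces of $H^\bullet(X, \cF)$ and $H^\bullet(X^{n_0}, U_{p_0}, \cF^{\sqtimes n_0})$; both are finite dimensional by hypothesis and Kunneth. Restricting to degrees $\leq k$ bounds $s$, so for each $p_0$ the piece $E_1(p_0)$ in degrees $\leq k$ is a finitely generated free $\FI$-module. Hence finite generation of $E_1^{\leq k}$ is equivalent to only finitely many $p_0$ contributing there.

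Second, I would reduce the contribution condition to a condition on indiscrete partitions via Kunneth. For an irreducible $p_0$ with blocks of sizes $k_1, \dots, k_m \geq 2$ and $n_0 = \sum k_i$, the Kunneth formula gives
\[H^\bullet(X^{n_0}, U_{p_0}, \cF^{\sqtimes n_0}) \;\cong\; \bigotimes_{i=1}^m H^\bullet(X^{k_i}, X^{k_i} - \Delta X, \cF^{\sqtimes k_i}).\]
Over the field $k$, the vanishing range of a tensor product is the sum of vanishing ranges plus $m-1$, so $\text{van}(p_0) = \sum_i \text{van}([k_i]) + m - 1$. Setting $a_n := \text{van}([n]) - r([n])$ and using $r(p_0) = n_0 - m$, one computes
\[a_{p_0} := \text{van}(p_0) - r(p_0) \;=\; \sum_i a_{k_i} + m - 1.\]
The minimal cohomological degree on $E_1$ coming from $p_0$ is $\text{van}(p_0) + 1 - r(p_0) = a_{p_0} + 1$, so $p_0$ contributes in degree $\leq k$ iff $\sum_i (a_{k_i} + 1) \leq k$.

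Finally, I would unwind this condition. For sufficiency, assume $a_n \geq 0$ for all $n \geq 2$ and $a_n \geq k$ for $n \geq N$: then every summand $a_{k_i}+1 \geq 1$ forces $m \leq k$, and any block with $k_i \geq N$ contributes $\geq k+1$, violating the bound, so every contributing $p_0$ has $k_i < N$ and $m \leq k$. This leaves only finitely many irreducible $p_0$, and finite generation of $E_1^{\leq k}$ follows from the first step and Proposition \ref{fgTensor}. For necessity, suppose the first condition fails at some $n_0 \geq 2$ (so $a_{n_0} < 0$): for every $m \geq 1$ the irreducible partition $p_0^{(m)}$ of $[m n_0]$ consisting of $m$ blocks of size $n_0$ satisfies $a_{p_0^{(m)}} = m a_{n_0} + m - 1 \leq -1 < k$, giving infinitely many contributing orbits in distinct $\FI$-orbits, so $E_1^{\leq k}$ cannot be finitely generated (each orbit contributes at least one nonzero generator). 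Similarly, if $a_n < k$ holds for infinitely many $n$, the indiscrete partitions $[n]$ themselves give infinitely many contributing orbits.

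The main obstacle is the Kunneth computation of $\text{van}(p_0)$: one must verify carefully that $H^{\text{van}([k_i])+1}(X^{k_i}, X^{k_i}-\Delta X, \cF^{\sqtimes k_i})$ is nonzero whenever $\text{van}([k_i])$ is finite, so that the Kunneth tensor produces a nonzero class in the minimal total degree $\sum_i \text{van}([k_i]) + m$, and hence that the vanishing range is exactly additive. The combinatorics in the necessity direction, namely producing infinitely many distinct irreducible orbits from a single failure of condition (1), is the other delicate bookkeeping step.
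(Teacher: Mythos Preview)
Your proposal is correct and follows essentially the same route as the paper: decompose the $E_1$ page over $\FI$-orbits of irreducible partitions, use Proposition~\ref{identifyy} and Proposition~\ref{fgTensor} to see each orbit contributes a finitely generated piece, and then reduce the finiteness of contributing orbits to the two conditions on $\van([n])-r([n])$ via K\"unneth. Your worry at the end is unfounded: by the very definition of $\van$ as the largest $t$ with $H^{\le t}=0$, the group $H^{\van+1}$ is automatically nonzero whenever $\van$ is finite, so the K\"unneth additivity $\van(p_0)=\sum_i\van([k_i])+m-1$ holds on the nose (in fact your formula is more accurate than the paper's, which writes $\van(p\sqcup q)=\van(p)+\van(q)$ but uses it only qualitatively).
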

			\begin{proof}
		
			As a graded $\FI$ module, the degree $d$ piece of the  $E_1$ page admits a direct sum decomposition: $$ \bigoplus_{  n_0 \in \N, ~  \{ p \in {\widehat P}(n_0) \text{ irreducible} \} / S_{n_0}, ~s \in \N,~ s - r(p_0) = d   } \left(  \bigoplus_{\{f: n_0 \into n\}/S_{n_0}} H^s(X^{[n]} , U_{f\lambda}, \cF^{\sqtimes[n]}) \otimes_{k} \Tor_{ r(\lambda)}(S_{\hat 1}, S_{f \lambda}) \right).$$  
			We write $H^s(X^{n_0}, U_{p_0})  \otimes H^{\rP(n)}_{\lambda}$ for  the summand  corresponding to $s,p_0$. We claim that the degree $d$ piece is finitely generated if and only if there are only finitely many $n_0, p_0, s$  with $p_0$ irreducible  such that $s - r(p_0) = d$ and $H^s(X^{n_0}, U_{p_0}) \neq 0$.  First, if infinitely many are nonzero then we get an infinite direct sum of free modules, and tensoring each with $\Tor_{ r(p_0)}(S_{\hat 1}, S_{f p_0})$ gives another infinite sum of free modules by Proposition \ref{identifyy}, and so we get an infinite sum on the $E_1$ page in degree $d$, which is not finitely generated.  Conversely, we have that $ \bigoplus_{\{f: n_0 \into \bdot\}/S_n} \Tor_{r( \lambda)}(S_{\hat 1}, S_{f\lambda}) \subset H^{r( \lambda)} ( \Conf_\bdot \R^2 , \ku)$ is finitely generated.  So if only finitely many $n_0, p_0, s$ have $H^s(X^{n_0}, U_{p_0}) $ nonzero, then tensoring each with $\bigoplus_{\{f: n_0 \into m\}/S_n} \Tor_{r( \lambda)}(S_{\hat 1}, S_{f\lambda})$ preserves finite generation, by Proposition \ref{fgTensor}.
			
			Next, there are only finitely many $n_0, p_0, s$  with $p_0$ irreducible and such that $s - r(p_0) = d$ and $H^s(X^{n_0}, U_{p_0}) \neq 0$ iff there are only finitely many  finite  natural number sequences   $c_1 \geq \dots \geq c_m \geq 1$ and $b \in \N$, $p_0$ a partition of $n_0$ such that $ b - r(p_0)  + \sum c_i = k $ and $H^{c_1}(X, \cF) \otimes \dots \otimes H^{c_m}(X, \cF) \otimes H^b(X^{n_0}, U_{p_0}, \cF ) \neq 0$ .  This follows from the fact that $H^s(X^{n_0}, U_{p_0}) \neq 0$ iff  there is some choice of $c_1 \geq \dots \geq c_m \geq 1$ and $b$ as before, and for each $s, p_0, n_0$ there are at most finitely many choices of $c_i, b$ such that $b + \sum c_i - r(p_0) = k$.   
		
			 Thus we have that the $E_2$ page is finitely generated in cohomological degree $d$ for all $d \leq k$ iff there are only  finitely many $c_1 \geq \dots \geq c_m \geq 1 , b, p_0$ such that $b - r(p_0) + \sum_i c_i \leq k$ and $ H^{c_1}(X, \cF) \otimes \dots \otimes H^{c_m}(X, \cF) \otimes H^b(X^{[n_0]}, U_{p_0} \cF^{\sqtimes [n_0]}  \neq 0$.  This is true iff there are only finitely many $b, p_0$ such that $b - r(p_0) \leq k$ and $H^b(X^{[n_0]}, U_{p_0} \cF^{\sqtimes [n_0]}  \neq 0$, because each sequence $c_i, b, p_0$ gives such a $b, p_0$, each $b, p_0$ arises in this way, and for each $b, p_0$ there are only fintitely many positive sequences $c_i$ that satisfy $b - r(p_0) + \sum_i c_i \leq k$.  
			 
			 For each $p_0$, there is a $b$ such that $b - r(p_0) \leq k$ and $H^b(X^{[n_0]}, U_{p_0} ,\cF^{\sqtimes [n_0]})  \neq 0$   if and only if $\van(p_0) \leq k$, and for a fixed $p_0$ there at most finitely many $b \geq 0$ with $b \leq k + r(p_0)$. So the $E_1$ page is finitely generated in degree $\leq k$ iff there are finitely many irreducible partitions $p_0$ with $\van(p_0) - r(p_0) \leq k$.

			  Now  for partitions $p, q$ of $a, b$ we have that $U_{p \sqcup q} = X^{a} \times X^{b} - Z_p \times Z_q$ so  the Kunneth formula for local cohomology shows that $\van(p \sqcup q) =\van(p)  + \van(q)$.  Every irreducible parition $p_0$ is  the disjoint union of discrete partitions $[n]$ for $n \geq 2$.  So  if we have $\van([n])  - r([n]) \leq k$ for only finitely many $n$, and $\van([n])  - r([n]) \geq 1$ for $n \geq 2$, then there are only finitely many ways of combining the blocks $[n],~ n \geq 2$ to get an irreducible partition $p_0$ with $\van(p_0) - r(p_0) \leq k$.  Conversely if there is an $[n]$ with $\van([n])  - r([n]) \leq 0$, then $\sqcup_{i = 1}^M [n]$ gives an infinite sequence of irreducible partitions with $\van(p_0) - r(p_0) \leq k$, and if there are infinitely many $n$ with $\van([n]) - r([n]) \leq k$ then there are trivially infinitely many irreducible partitions $p_0$ with $\van(p_0) - r(p_0) \leq k$.
			    \end{proof}

			  Proposition \ref{evilprop} and \ref{noetherianity} together imply our main theorem about  configuration spaces, which gives a criterion for $H^{\leq c}$ to be a finitely generated $\FI$ module.  Here we write it in terms of the auxiliary function ${\rm van}(n)$.

		\begin{thm}[Criterion for representation stability]\label{criterion}
			Let $X$ be a Hausdorff, locally contractible topological space and $\cF$ a sheaf of $k$ vector spaces on $X$ such that $H^0(X, \cF) = {k}$ and $H^i(X, \cF)$ is finite dimensional for $i > 0$.    If 
			\begin{enumerate}
				\item $\van(n) \geq n -1$ for $n \geq 2$  and
				\item  ${\rm van}(n) \geq n + c$ for $n \gg 0$,
			\end{enumerate}
			 then $H^i(\Conf_\bdot(X), \cF)$ is a finitely generated {\rm $\FI$} module for all $i < c$.
		\end{thm}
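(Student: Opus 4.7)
The plan is to feed the spectral sequence of $\FI$ modules from Proposition \ref{FISS} into the finite generation criterion of Proposition \ref{evilprop}, and then invoke the Noetherianity result of Theorem \ref{noetherianity} to transfer finite generation from the $E_1$ page to the abutment.

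First, I would translate the hypotheses into the language of Proposition \ref{evilprop}. The discrete partition $[n]$ (corresponding to the full diagonal embedding) has a single block, so its rank is $r([n]) = n - 1$. Hypothesis (1), $\van(n) \geq n - 1$ for $n \geq 2$, is therefore the condition $\van([n]) - r([n]) \geq 0$ for every $n$. Hypothesis (2), $\van(n) \geq n + c$ for $n \gg 0$, gives $\van([n]) - r([n]) \geq c + 1$ eventually, and in particular $\geq c - 1$. Thus the hypotheses of Proposition \ref{evilprop} hold with its degree parameter equal to $c - 1$, so $\bigoplus_{i - j \leq c - 1} E_1^{i, -j}$ is a finitely generated $\FI$ module.

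Second, I would apply Noetherianity. By Theorem \ref{noetherianity}, $\FI$ modules over a field are Noetherian, so every subquotient of a finitely generated $\FI$ module is again finitely generated; in particular each $E_\infty^{i, -j}$ with $i - j \leq c - 1$ is a finitely generated $\FI$ module. For each fixed $n$, the partition lattice $\rP(n)$ is finite, so only finitely many terms on the $E_1$ page contribute in any total cohomological degree at $n$, and the resulting filtration on $H^d(\Conf_n(X), \cF)$ is finite with associated graded $\bigoplus_{i - j = d} E_\infty^{i, -j}$. Finite generation is preserved under finite extensions, so $H^d(\Conf_\bdot(X), \cF)$ is finitely generated for every $d \leq c - 1$, i.e., for every $d < c$, which is the conclusion.

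The substantive content has already been packaged in Propositions \ref{FISS}, \ref{evilprop} and Theorem \ref{noetherianity}, so there is no real obstacle; the remaining work is purely bookkeeping, namely matching the parameter $c$ of the theorem against the degree bound of Proposition \ref{evilprop} and using $r([n]) = n - 1$ to convert the two conditions on $\van(n)$ into the normalized form $\van([n]) - r([n])$ required by that proposition. A minor observation worth recording is that the hypothesis $\van(n) \geq n + c$ is slightly stronger than strictly necessary: the weaker bound $\van(n) \geq n + c - 2$ already suffices.
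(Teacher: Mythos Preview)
Your proposal is correct and matches the paper's own argument exactly: the paper states that Theorem \ref{criterion} follows directly from Proposition \ref{evilprop} combined with Theorem \ref{noetherianity} (and its corollary on spectral sequences), and you have carried out precisely this deduction, including the bookkeeping $r([n]) = n-1$ and the observation about slack in hypothesis~(2). One terminological slip: the one-block partition $[n]$ you invoke is the \emph{indiscrete} partition in the paper's conventions, not the discrete one, though your parenthetical ``corresponding to the full diagonal embedding'' makes your intended meaning clear.
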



			We give the proofs of corollaries from the introduction:  
		\begin{proof}[Proof of Corollary \ref{timesr2}]
			 Let $\Delta^X_n$ be the $n$th diagonal embedding of $X$. Then $H^i(X^n, X^n - \Delta^X_n X, k) = H^{i} (  \rR \pi_*\Delta^{X!}_n \ku)$.    If a complex of sheaves $C$ has homology (as a complex of sheaves) supported in cohomological degree $\geq i$, then so does $\rR^i\pi_* C$, so it suffices for $\rR \Delta^{X!}_n \ku$ to be supported in cohomological degree $\geq n$ when $n \geq 2$, and to have the range where it vanishes grow faster than $n + c$ for any constant $c$.  Since $\rR \Delta_n^! \ku$ is a complex of sheaves, this can be checked locally, and we have by the Kunneth formula, Proposition \ref{kunneth} and locality of $\rR i^!$, Proposition \ref{localization},  that $\rR\Delta^{X!}_n \ku |_{U_p} = \rR\Delta^{V_p !}_n \ku \sqtimes \rR\Delta^{\R^2 !}_ n\ku$.  But for $\R^2$ we have the diagonal embedding $\R^2 \into \R^{2n}$, so taking the cone from the pushforward of the complement $\R^{2n -2}$ we see that  $\rR\Delta^{\R^2 !}_ n\ku = \ku[-(2n - 2)]$ is concentrated in cohomological degree $\geq 2n -2$. So by the external tensor product, so is $\rR\Delta^{X!}_n \ku |_{U_p}$. 
  		\end{proof}
  		
		\begin{proof}[Proof of Corollary \ref{dualcorr2}]
			Proposition \ref{dualizing} shows that $H^i(X^n, X, \ku) = H^{i} (\rR \pi_* \rR \Hom(\omega_X^{\otimes n}, \omega_X)$, for $X$ locally contractible and embeddable into euclidean space. As in the proof of Corollary \ref{timesr2}, to satisify the hypotheses of Theorem \ref{introcriterion}, it suffices for $\rR \Hom(\omega_X^{\otimes n}, \omega_X)$ to be concentrated in cohomological degree $\geq 2n -2$, and thus for $\omega_X$ to be concentrated in homological degree $\geq 2$.
			
			The complex $\omega_X$ is concentrated in homological degree $\geq 2$ if and only if every stalk of $\omega_X$ has homological degree $\geq 2$.  For a contractible neighborhood $j: U \to X$ of $p$ we have that $i_p^* \pi_X^! k = i_p^* \pi_U^! k$, and for the inclusion $l: U - p \to U$ we have the triangle $$l_! l^! \pi_U^! k \to \pi_U^! k \to i_! i^* \pi_U^! k,$$  and applying $\pi_{U!}$  we get a triangle involving the homology of $U-p$, the homology of $U$ and $H_\bdot(i_p^* \omega_X)$.  So we see that $H_i(i_p^* \omega_X) = \tilde H_{i-1}(U- p, k)$.  Thus if $U-p$ is connected, the homology of the stalk vanishes in homological degrees $< 2$.  
		\end{proof}


 \begin{appendices}

 \section{Background on Local Cohomology}\label{loccohom}

			Let $X$ be a topological space,  $i: Z \to X$ be a closed subset and $j: U \to Z$ its open complement.  In this section, we give the background we need  on  $H^\bdot(X, U, \cF)$, called the local cohomology of $Z$ in $X$.       
			
			  Then the functor $i^!: \Sh(X) \to \Sh(Z)$ is  defined by $$i^!(\cF)(V \cap Z) = \{s \in \cF(V) ~|~ s \text{  is supported on }  Z\}$$ It is left exact and we may write $\rR i^!$ for its right derived functors.  However,  we will use an alternate definition of $\rR i^!$, from which the properties that we need follow easily.  Recall that there is a natural map $\cF \to \rR j_*j^* \cF$, which is modelled by $\cJ^{\bdot} \to j_* j^*\cJ^\bdot  $
			\begin{defn}
				We define $\rR i^! \cF  := \cone( \cF \to  \rR j_*j^*\cF)[-1]$
			\end{defn}
			
			If we take the cohomology of the global sections of $\rR i^! \cF$ we get the relative sheaf cohomology of $U,X$:  $$H^i(\rR\pi_*\rR i^! \cF) = H^{i}(X, U, \cF) $$
			
			We use the following version of the Kunneth formula to make reductions above:
			\begin{prop}[Kunneth for local cohomology]\label{kunneth}
				Let $X, X'$ be locally contractible  topological spaces, let $\cF, \cF'$ sheaves of $k$ modules , and let $Z, Z'$ be closed subsets with inclusions $i,i'$ and complements $U,U'$.  On  $T \times T'$ we have the closed subset $Z \times Z'$.  Then $R(i \times {i'})^! (\cF \boxtimes \cF ') \simeq (\rR i^!\cF  )\boxtimes (\rR{i'}^! \cF'$.  In particular for $k$ a field, then we have that $H^\bdot(X \times X',  X \times X' - Z \times Z', \cF \sqtimes \cF') = H^\bdot(X, U, \cF) \otimes H^\bdot(X', U', \cF')$   

			\end{prop}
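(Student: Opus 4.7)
The plan is to reduce the Kunneth isomorphism for $\rR (i \times i')^!$ to two separate one-variable statements by factoring the closed embedding, and then to deduce the cohomology statement by applying $\rR \pi_*$ together with the classical Kunneth formula for pushforward to a point.

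First I would write $i \times i' = (i \times \id_{X'}) \circ (\id_Z \times i')$, so that the uniqueness of right adjoints gives $\rR(i \times i')^! \simeq \rR(\id \times i')^! \circ \rR(i \times \id)^!$. By symmetry, it suffices to prove the single-variable statement
\[
\rR(\id_X \times i')^!(\cF \boxtimes \cF') \;\simeq\; \cF \boxtimes \rR i'^!\,\cF'.
\]
Applying this twice and composing then yields $\rR(i \times i')^!(\cF \boxtimes \cF') \simeq \rR i^!\cF \boxtimes \rR i'^!\cF'$.

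Second, using the cone definition $\rR i'^!\cF' = \cone(\cF' \to \rR j'_* j'^*\cF')[-1]$ and the fact that $(\id \times j')^*(\cF \boxtimes \cF') = \cF \boxtimes j'^*\cF'$, the single-variable statement reduces to the pushforward Kunneth
\[
\rR(\id_X \times j')_*(\cF \boxtimes j'^*\cF') \;\simeq\; \cF \boxtimes \rR j'_* j'^*\cF'.
\]
To prove this, I would fix a flabby resolution $j'^*\cF' \simto \cI'$. Since $X, X'$ are locally contractible (hence the ambient setting is tame enough for external tensor products of flabby sheaves to push forward correctly along maps of the form $\id \times f$), the complex $\cF \boxtimes \cI'$ is a resolution of $\cF \boxtimes j'^*\cF'$ by sheaves acyclic for $(\id \times j')_*$. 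Taking stalks at $(x,y)$ the natural map $(\id \times j')_*(\cF \boxtimes \cI') \to \cF \boxtimes j'_*\cI'$ becomes the identity $\cF_x \otimes (j'_*\cI')_y$, since an open neighborhood basis of $(x,y)$ in $X \times X'$ is given by products, and the filtered colimit of $\cF_x \otimes \cI'(V)$ over shrinking $V$ is $\cF_x \otimes (j'_*\cI')_y$. Thus the two sides agree term-by-term and assemble to the claimed quasi-isomorphism.

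Third, for the statement about cohomology over a field, I would apply $\rR(\pi_X \times \pi_{X'})_*$ to the isomorphism established above. We have
\[
H^\bdot(X \times X', X \times X' - Z \times Z', \cF \boxtimes \cF') \;=\; H^\bdot(\rR(\pi_X \times \pi_{X'})_* \rR(i \times i')^!(\cF \boxtimes \cF')).
\]
The Kunneth formula above gives $\rR(i \times i')^!(\cF \boxtimes \cF') \simeq \rR i^!\cF \boxtimes \rR i'^!\cF'$, and another application of the pushforward Kunneth (this time for $\pi_X \times \pi_{X'}$ to a point, which is the standard Kunneth for sheaf cohomology on a product) gives $\rR\pi_*\rR i^!\cF \otimes^{\rL} \rR\pi'_*\rR i'^!\cF'$. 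Over a field, derived and ordinary tensor products coincide, and taking cohomology yields the desired identification $H^\bdot(X, U, \cF) \otimes H^\bdot(X', U', \cF')$.

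The main obstacle I expect is the second step: verifying the pushforward Kunneth $\rR(\id \times j')_*(\cF \boxtimes j'^*\cF') \simeq \cF \boxtimes \rR j'_* j'^*\cF'$ rigorously, since neither $j'_*$ nor $\boxtimes$ is exact and one must use that the local contractibility hypothesis forces external tensor products of flabby (or sufficiently acyclic) sheaves to remain acyclic for the relevant pushforwards. Once one has a stalk-local check and an acyclic resolution to transport across, the rest of the argument is formal manipulation of cones and derived functors.
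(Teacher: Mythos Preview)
Your approach is correct in outline and reaches the same conclusion, but it is organized differently from the paper. The paper does not factor $i \times i'$ through $(i \times \id)\circ(\id \times i')$; instead it observes directly that the complement decomposes as
\[
X \times X' - Z \times Z' \;=\; (U \times X') \cup (X \times U'),
\]
applies Mayer--Vietoris to write $\rR(j \times j')_*(\cF \boxtimes \cF')|_{X\times X' - Z\times Z'}$ as a two-step total complex involving $(\rR j_*\cF|_U)\boxtimes \cF'$, $\cF \boxtimes (\rR j'_*\cF'|_{U'})$, and $(\rR j_*\cF|_U)\boxtimes(\rR j'_*\cF'|_{U'})$, and then recognizes the full three-term total complex (with $\cF\boxtimes\cF'$ in front) as the tensor product of the two cones $\cone(\cF \to \rR j_*\cF|_U)\boxtimes\cone(\cF' \to \rR j'_*\cF'|_{U'})$. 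This gives the identification in one step rather than by iterating a one-variable statement.

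Both approaches ultimately rely on the same technical input you flag: a pushforward Kunneth of the form $\rR(\id \times j')_*(\cF \boxtimes -) \simeq \cF \boxtimes \rR j'_*(-)$, which is where the local contractibility hypothesis enters. The paper's proof invokes this (in a footnote) just as you do, so your ``main obstacle'' is genuinely the heart of the matter in either version. What the Mayer--Vietoris route buys is that one never needs to compose two applications or worry about applying the one-variable Kunneth to a complex (your second application has $\rR i'^!\cF'$, not a single sheaf, in the second slot); the product-of-cones structure falls out symmetrically and all at once.
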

			\begin{proof}
				We have that $X \times X' - Z \times Z' = (U \times X') \cup (X \times U')$.  So by Mayer-Vietoris and the Kunneth formula\footnote{ Here we use that $X, X'$ are locally contractible to apply Kunneth.}, we have an exact sequence: $$ \rR j_* ( \cF \boxtimes \cF') |_{X \times X' - Z \times Z'}  \to (\rR j_* \cF|_U) \boxtimes \cF'  \oplus  \cF \boxtimes (\rR j_* \cF'|_{U'})  \to   (\rR j_* \cF|_U)\boxtimes  (\rR j_* \cF'|_{U'}), $$ so that $$ R(i \times {i'})^! (\cF \boxtimes \cF )' [ 1] \simeq \text{ {Tot\rm} }  \big ( \cF \boxtimes \cF' \to (\rR j_* \cF|_U) \boxtimes \cF'  \oplus  \cF \boxtimes (\rR j_* \cF'|_{U'})  \to   (\rR j'_* \cF|_U)\boxtimes  (\rR j_* \cF'|_{U'}) \big )$$  $$= \cone(  \cF \to   \rR j_*  \cF|_U)  \boxtimes \cone(  \cF \to   \rR j'_*  \cF'|_{U'})   =  (\rR i^!\cF [1] )\boxtimes (\rR{i'}^! \cF'[1])$$
			\end{proof}
			It is also important, that $\rR i^!$ is determined locally.  
			\begin{prop}[Localization]\label{localization}
				Let $V \subset X$ be an open subset and write $i|_V: Z \cap V \to V$ for the inclusion.  Then $ (\rR i^! \cF)|_V   = \rR i^!|_V (\cF|_V)  $ 
			\end{prop}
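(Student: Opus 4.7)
The plan is to unwind the definition
$\rR i^!\cF := \cone\bigl(\cF \to \rR j_* j^* \cF\bigr)[-1]$
and check that the restriction functor $(-)|_V$ commutes with each ingredient. Since restriction along the open inclusion $V \hookrightarrow X$ is exact, it commutes with the formation of cones and shifts, so the whole statement reduces to the identification
\[
  (\rR j_* j^* \cF)\big|_V \ \iso\ \rR (j|_V)_*\, (j|_V)^* (\cF|_V),
\]
where $j|_V : U \cap V \to V$ is the base change of $j$.

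To verify this identification, I would first fix a flabby resolution $\cF \simto \cJ$ and note two elementary facts. First, the restriction of a flabby sheaf along an open inclusion is flabby: any open $W \subset V$ is also open in $X$, and flabbiness of $\cJ$ gives surjectivity of $\cJ(V) \to \cJ(W)$, i.e.\ of $\cJ|_V(V) \to \cJ|_V(W)$. Second, for any sheaf $\cG$ on $U$ and any open $W \subset V$, one has $(j_*\cG)(W) = \cG(W \cap U) = (j|_V)_* (\cG|_{U \cap V})(W)$, so at the level of ordinary sheaves, $(j_*\cG)|_V = (j|_V)_* (\cG|_{U \cap V})$. Similarly $(j^* \cJ)|_{U \cap V} = (j|_V)^* (\cJ|_V)$ on the nose, since pullback along an open inclusion is just restriction.

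Combining these, $(j_* j^* \cJ)|_V = (j|_V)_* (j|_V)^* (\cJ|_V)$ as chain complexes of sheaves on $V$. Because $\cJ$ and $j^*\cJ$ are flabby, their restrictions to $V$ and $U\cap V$ are flabby as well, so the left-hand side models $(\rR j_* j^* \cF)|_V$ and the right-hand side models $\rR (j|_V)_*\, (j|_V)^* (\cF|_V)$. Applying $\cone(\cJ|_V \to -)[-1]$ to both sides gives the claimed isomorphism $(\rR i^!\cF)|_V \iso \rR (i|_V)^! (\cF|_V)$.

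There is no real obstacle here; the only subtlety to be careful about is using specifically open-inclusion base change rather than the general (and more delicate) base-change theorem for $\rR f_*$, and making sure that $(-)|_V$ of a flabby resolution is still an acceptable flabby resolution so that both sides compute the claimed derived functors without needing any further derived-category manipulations.
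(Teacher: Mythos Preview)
Your proposal is correct. The paper actually states Proposition~\ref{localization} without proof, so there is nothing to compare against; your argument supplies exactly the routine verification the paper omits, unwinding the cone definition of $\rR i^!$ and using that restriction to an open subset is exact, preserves flabbiness, and satisfies the sheaf-level base-change identity $(j_*\cG)|_V = (j|_V)_*(\cG|_{U\cap V})$.
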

			
			Lastly, to restate the vanishing criterion,  we  use a Kunneth formula  for the dualizing complex of a space:
			\begin{prop}[Kunneth for Dualizing Complexes]\label{generalkunneth}
				Let $i_X: X \to \R^n$ and $i_Y : Y \to \R^m$ be two locally contractible topological spaces together with closed embeddings into euclidean space,  $j_X, j_Y$ the embeddings of their open complements. Let $V, W$ be free $k$ modules.   We write $\pi_X : X \to *$ and $\pi_Y: Y \to *$ for the respective terminal maps.  Then  $\pi_X^! V \simeq Ri_X^!  \underline V [n]$ and  $\pi_Y^! W \simeq Ri_Y^! W [m]$ are isomorphic complexes in the derived category.  And we have that $(\pi_X \times \pi_Y)^! (V \otimes W) \simeq \pi_X^! V \sqtimes \pi_Y^! W$

			\end{prop}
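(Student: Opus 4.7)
The plan is to factor each terminal map through its embedding into Euclidean space and to leverage the Kunneth formula already proved in Proposition~\ref{kunneth}. Write $\pi_{\R^n}\colon \R^n \to *$ for the terminal map, so that $\pi_X = \pi_{\R^n}\circ i_X$ and similarly for $Y$. By the composition law for upper-shriek, $\pi_X^! V \simeq R i_X^!\,\pi_{\R^n}^! V$, so the first claim reduces to the identification $\pi_{\R^n}^! V \simeq \underline{V}[n]$ for $V$ a free $k$-module. The second claim will then follow by applying the same factorization to $\pi_X \times \pi_Y = \pi_{\R^{n+m}}\circ(i_X\times i_Y)$ and invoking Proposition~\ref{kunneth}.

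The first step is the computation $\pi_{\R^n}^! V \simeq \underline V[n]$. The quickest route is induction on $n$. For the base case $n=1$, I would embed $\{0\}\hookrightarrow \R$ with open complement $\R-\{0\}$, compute $R i_0^!\,\underline V[1]$ using the cone definition of $R i^!$, and deduce by the triangle $R i_{0*}\,R i_0^!\,\underline V[1]\to \underline V[1]\to Rj_*\,\underline V[1]|_{\R-\{0\}}$ that stalkwise the result matches the orientation sheaf in degree $-1$, i.e.\ the dualizing complex of $\R$. For the inductive step, factor $\R^n = \R\times \R^{n-1}$ and use Proposition~\ref{kunneth} (applied to the two closed embeddings $\{0\}\hookrightarrow \R$ and $\R^{n-1}\hookrightarrow \R^{n-1}$ viewed as the identity) combined with the freeness of $V$ to obtain $\pi_{\R^n}^! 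V\simeq \pi_{\R}^! k\boxtimes \pi_{\R^{n-1}}^! V\simeq \underline{k}[1]\boxtimes \underline V[n-1]\simeq \underline V[n]$. Combining with $\pi_X^! V \simeq R i_X^!\pi_{\R^n}^! V$ yields the first assertion.

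For the Kunneth identity, observe that $\pi_{\R^{n+m}}^!(V\otimes W) \simeq \underline{V\otimes W}[n+m] \simeq \underline V[n]\boxtimes\underline W[m]$ by the first part applied to $\R^{n+m}$ (or directly by unwinding the external tensor product of constant sheaves on Euclidean spaces). Then
\[
(\pi_X\times\pi_Y)^!(V\otimes W)
\;\simeq\; R(i_X\times i_Y)^!\bigl(\underline V[n]\boxtimes \underline W[m]\bigr)
\;\simeq\; \bigl(R i_X^!\underline V[n]\bigr)\boxtimes\bigl(R i_Y^!\underline W[m]\bigr)
\;\simeq\; \pi_X^! V \boxtimes \pi_Y^! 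W,
\]
where the middle isomorphism is Proposition~\ref{kunneth} applied to the closed embeddings $i_X$ and $i_Y$ (their product has closed image $X\times Y$ in $\R^{n+m}$), and the last isomorphism is the first part of the proposition.

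The main obstacle is the base computation $\pi_{\R^n}^! V \simeq \underline V[n]$: everything else is formal manipulation with the factorization of terminal maps and the already-proved Kunneth formula for $R i^!$. One delicate point to handle carefully is that the paper's definition of $R i^!$ is via a cone, so functoriality $\pi_X^! \simeq R i_X^!\circ \pi_{\R^n}^!$ needs to be checked against this definition; this reduces to noting that $R i_X^!$ commutes with $\pi_{\R^n}^!$ of a flabby resolution of the constant sheaf, which holds because the relevant open-inclusion pushforwards preserve flabbiness.
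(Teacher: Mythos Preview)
Your overall architecture is exactly the paper's: factor $\pi_X = \pi_{\R^n}\circ i_X$, use $\pi_{\R^l}^!\,U \simeq \underline U[l]$, and then apply Proposition~\ref{kunneth} to the product embedding $i_X\times i_Y$. The final displayed chain of isomorphisms is precisely the paper's argument. The only difference is that the paper does not attempt to prove $\pi_{\R^l}^!\,U \simeq \underline U[l]$ (or the composition law $\pi_Z^! \simeq \rR i_Z^!\,\pi_{\R^l}^!$); it simply cites Kashiwara--Schapira, Chapter~3, for both.

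Your attempt to supply an inductive proof of $\pi_{\R^n}^! V \simeq \underline V[n]$ has a genuine gap. In the base case you compute $\rR i_0^!\,\underline V[1]$, which is a complex on the point $\{0\}$, not on $\R$; at best this verifies that the costalk of $\underline V[1]$ is what the dualizing complex of $\R$ ought to have, but it does not identify $\pi_\R^! V$ as a complex on $\R$. In the inductive step you write $\pi_{\R^n}^! V \simeq \pi_\R^! k \boxtimes \pi_{\R^{n-1}}^! V$, but that isomorphism is an instance of the very K\"unneth identity for $\pi^!$ that the proposition asserts, so invoking it here is circular. Proposition~\ref{kunneth} only gives $\rR(i\times i')^!(\cF\boxtimes\cF') \simeq \rR i^!\cF \boxtimes \rR i'^!\cF'$ for closed embeddings $i,i'$; applied to $\{0\}\hookrightarrow\R$ and $\mathrm{id}_{\R^{n-1}}$ it produces a statement over $\{0\}\times\R^{n-1}$, which lets you descend from $n$ to $n-1$ but not ascend.

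The cleanest repair is to do what the paper does and take $\pi_{\R^l}^! U \simeq \underline U[l]$ and the composition formula as black boxes from the Verdier duality package. Once those are granted, your final paragraph is a complete proof and coincides with the paper's.
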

			\begin{proof}
				By \cite{kashiwara2013sheaves}, Chapter 3, for any closed embedding $Z \into \R^l$,  and vector space $U$, we have $\rR\pi_X^! U \simeq \rR i_Z^! \pi_{\R^m}^! U \simeq \rR i_Z^! U[l]$.  So for the closed embedding $Z = X \times Y \into \R^n \times \R^m$ we have that $\pi^!_{X\times Y} V \otimes W = \rR(i_X^ \times i_Y)^! V[n] \otimes W[m] = \rR i_X^! V[n] \sqtimes \rR i_Y^! W[m] $ by \ref{kunneth}.
			\end{proof}
			
			\begin{prop}\label{dualizing}
				Let $X$ be a locally contractible topological space, embeddable in $\R^l$,  with diagonal embedding $\Delta_n: X \to X^n$.  Then $\rR \Delta_n^! \ku ={\omega_X^*}^{\otimes n-1}$
			\end{prop}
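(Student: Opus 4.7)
The plan is to derive $\rR\Delta_n^!\,\ku$ from the universal property of the dualizing complex, using the functoriality of upper-shriek under composition together with the Kunneth formula of Proposition~\ref{generalkunneth} and a projection formula to isolate $\rR\Delta_n^!\,\ku$.

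First, I would use functoriality. Since $\pi_X = \pi_{X^n}\circ \Delta_n$, we get
$$\omega_X \;=\; \pi_X^{!}\,\ku \;\simeq\; \rR\Delta_n^{!}\,\pi_{X^n}^{!}\,\ku \;=\; \rR\Delta_n^{!}\,\omega_{X^n}.$$
Next, by the iterated form of Proposition~\ref{generalkunneth} (applied to the $n$-fold external product $X^n=X\times\cdots\times X$, each factor being a closed subspace of $\R^l$), the dualizing complex of the product is the external tensor product $\omega_{X^n}\simeq \omega_X^{\boxtimes n}$.

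Second, I would apply the projection formula for upper-shriek: for a closed embedding $f\colon Y\to Z$ of locally compact Hausdorff spaces and a sheaf $F$ on $Z$,
$$\rR f^{!}\,F \;\simeq\; \rR f^{!}\,\ku_Z \,\otimes^{\rL}\, f^{*}F,$$
which is standard in \cite{kashiwara2013sheaves}. Since $X$ is embeddable into $\R^l$, the product $X^n$ is locally compact Hausdorff and $\Delta_n$ is a closed embedding, so the formula applies. Taking $f=\Delta_n$ and $F=\omega_X^{\boxtimes n}$, and noting that $\Delta_n^{*}\,\omega_X^{\boxtimes n}=\omega_X^{\otimes n}$, I obtain
$$\omega_X \;\simeq\; \rR\Delta_n^{!}\,\omega_{X^n} \;\simeq\; \rR\Delta_n^{!}\,\ku \,\otimes^{\rL}\, \omega_X^{\otimes n}.$$

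Third, the notation $\omega_X^{*\otimes(n-1)}$ is used throughout the paper as the formal tensor-inverse, so the identity $\rR\Delta_n^{!}\,\ku\otimes^{\rL}\omega_X^{\otimes n}\simeq \omega_X$ is exactly what is meant by writing $\rR\Delta_n^{!}\,\ku\simeq \omega_X^{*\otimes(n-1)}$; rearranging gives the claim. (Equivalently, one may rewrite the identity as $\rR\Delta_n^{!}\,\ku \simeq \rR\Hom(\omega_X^{\otimes n},\omega_X)$, which matches the usage in Corollary~\ref{dualcorr2} and the computations in Section~\ref{confintro}.)

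The main obstacle is ensuring that the projection formula applies in the stated generality; this is why the hypothesis of embeddability into $\R^l$ appears, as it places us in the setting of closed embeddings between locally compact Hausdorff spaces where Verdier duality and its consequences are available. Once that is in hand, the proof reduces to the two-line manipulation above, and the sanity check with $X=\R^d$ (where $\omega_X = \ku[d]$, $\omega_X^{*\otimes(n-1)} = \ku[-(n-1)d]$, matching the codimension of $\Delta_n\colon \R^d\to \R^{dn}$) confirms the degree bookkeeping.
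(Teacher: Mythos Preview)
Your use of the projection formula $\rR f^{!}F \simeq \rR f^{!}\ku \otimes^{\rL} f^{*}F$ for a closed embedding $f$ is the gap. That formula holds when $f$ is a topological submersion, or when $F$ is cohomologically locally constant, but it fails for arbitrary closed embeddings and arbitrary $F$. A quick counterexample: for $i\colon\{0\}\hookrightarrow\R$ and $F=i_*k$ the skyscraper at $0$, one has $i^{!}F=k$ in degree $0$, whereas $i^{!}\ku_{\R}\otimes i^{*}F=k[-1]$. Here $F=\omega_{X^n}$ is not cohomologically locally constant unless $X$ is a manifold, so you cannot invoke that version of the projection formula. The subsequent ``rearranging'' step is also unjustified: from $A\otimes^{\rL} B\simeq C$ one does not get $A\simeq\rR\Hom(B,C)$ without $B$ being dualizable, and $\omega_X$ need not be invertible for general $X$.

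The paper's argument avoids both issues by using the identity $f^{!}=D_X\circ f^{*}\circ D_{X^n}$ (Verdier duality intertwines $f^{!}$ and $f^{*}$), which for $f=\Delta_n$ and the constant sheaf gives directly
\[
\rR\Delta_n^{!}\ku \;\simeq\; \rR\underline{\Hom}\bigl(\Delta_n^{*}\,\rR\underline{\Hom}(\ku,\omega_{X^n}),\,\omega_X\bigr) \;\simeq\; \rR\underline{\Hom}(\omega_X^{\otimes n},\omega_X),
\]
after applying the K\"unneth identification $\omega_{X^n}\simeq\omega_X^{\boxtimes n}$ from Proposition~\ref{generalkunneth}. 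This is exactly the formula you wrote in your parenthetical remark; that should be the main argument, obtained from the adjunction-type identity $f^{!}\rR\underline{\Hom}(F,G)\simeq\rR\underline{\Hom}(f^{*}F,f^{!}G)$ (with $F=G=\omega_{X^n}$ and biduality $\rR\underline{\Hom}(\omega_{X^n},\omega_{X^n})\simeq\ku$), rather than a consequence of a tensor projection formula.
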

			\begin{proof}
				    Proposition \ref{generalkunneth} states that $\omega_{X^n} = \omega_X^{\sqtimes n}$.  Verdier duality intertwines $f^!$ and $f^*$  \cite{kashiwara2013sheaves}, so we have 
	      $\Delta^! \ku^{\sqtimes n} = \rR \underline{ Hom}(  ~\rR \Delta^* ~ \rR \underline{Hom}(\ku^{\sqtimes n}, \omega_{X^n}) , \omega_X)$.   Therefore we have $\rR\Delta_n^! \ku =  \underline{Hom}(\omega_X^{\otimes n} , \omega_X) = ({\omega_X^*})^{\otimes n-1}$.
	     		\end{proof}

 \end{appendices}

				\printbibliography

\end{document}